\numberwithin{equation}{section}
\theoremstyle{plain}
\newtheorem{theorem}{Theorem}[section]
\newtheorem{lemma}[theorem]{Lemma}
\newtheorem{proposition}[theorem]{Proposition}
\newtheorem*{theoremA}{Theorem A}
\newtheorem*{theoremB}{Theorem B}
\theoremstyle{remark}
\theoremstyle{definition}
\newcommand{\R}{\mathbb{R}}
\newcommand{\SF}{\mathbb{S}}
\begin{document}

\title{\bf DENSITY ESTIMATES FOR VECTOR MINIMIZERS\\AND APPLICATIONS}

\author{Nicholas D.\ Alikakos\thanks{The first author was partially supported through the project PDEGE – Partial Differential Equations
Motivated by Geometric Evolution, co-financed by the European Union – European Social
Fund (ESF) and national resources, in the framework of the program Aristeia of the ‘Operational
Program Education and Lifelong Learning’ of the National Strategic Reference Framework
(NSRF).}\;\,\footnote{The research of N. Alikakos has been co-financed by the European Union – European Social
Fund (ESF) and Greek national funds through the �� Operational Program
 Education and Lifelong Learning’ of the National Strategic Reference
Framework (NSRF) - Research Funding Program: THALES}\hskip.2cm and Giorgio Fusco}

\date{}
\maketitle

\begin{abstract}
We extend the Caffarelli-Cordoba estimates to the vector case
in two ways, one of which has no scalar counterpart, and we give a few
applications for minimal solutions.
\end{abstract}
\section{Introduction}
This paper is concerned with solutions to the system
\begin{equation}\label{system}
\Delta u-W_u(u)=0,\quad u:D\rightarrow\R^m
\end{equation}
$D\subset\R^n$, where $D=\R^n$ is an important special case, $W:\R^m\rightarrow\R$, $W\geq 0$, with regularity specified later, and $W_u=(\frac{\partial W}{\partial u_1},\dots,\frac{\partial W}{\partial u_m})^\top$.

Unlike the scalar case $m=1$, where for a class of results, the form of the {\it potential} $W$ is immaterial, for the system the connectedness of $\{W=0\}\neq\emptyset$ plays a major role. Distinguished examples are:
(a) the phase transition model or vector Allen-Chan equation, where $W$ has a finite number $N$ of global minima $a_1,\dots,a_N$ (Baldo \cite{b}, Bronsard and Reitich \cite{br}),
(b) the Ginzburg-Landau system $\Delta u-(\vert u\vert^2-1)u=0$ (Bethuel, Brezis and Helein \cite{bbh}) and
(c) the phase separation system $\Delta u-\sum_{j\neq i}u_iu_j=0$ (Caffarelli and Lin \cite{cl}) and its variants.

System \ref{system} is the Euler-Lagrange equation for the functional
\begin{equation}\label{functional}
J_D(u)=\int_D\Big(\frac{1}{2}\vert\nabla u\vert^2
+W(u)\Big) dx.
\end{equation}
In the present paper we limit ourselves to uniformly bounded solutions to (\ref{system}) that are {\it minimal} in the sense that
\begin{equation}\label{min-def}
J_\Omega(u)=\min_v J_\Omega(v),\quad v=u\;\text{ on}\;\partial\Omega
\end{equation}
for every $\Omega$ open, bounded Lipschitz $\Omega\subset D$.

The basic estimate for such solutions is
\begin{equation}\label{basic-est}
\int_{B_B(x_0)}\Big(\frac{1}{2}\vert\nabla u\vert^2
+W(u)\Big) dx\leq C R^{n-1},
\end{equation}
$B_R(x_0)$ the $R$-ball in $\R^n$, center $x_0$, $B_R(x_0)\subset D$.

The key hypothesis in our theorems is
\begin{equation}\label{key-hyp}
W(a)=0,\;a\;\text{ isolated in }\;\{W=0\}
\end{equation}
This assumption excludes examples (b) and (c) above. It is well known that the phase transition model is linked to minimal surfaces ($m=1$) and Plateau Complexes ($m\geq 2$). In particular in the vector case entire solutions to (\ref{system}) are linked to singular minimal cones which unlike planes have additional hierarchical structure ( Alikakos \cite{a2} ).

The main purpose of this paper is the various extensions of the Caffarelli-Cordoba density estimates \cite{cc} to the vector case. In the scalar case, among other things, these estimates refine the linking of the phase transition model to minimal surfaces and have played a major role in the resolution of De Giorgi conjecture in higher dimensions ( Savin \cite{sa} ). Other extensions to the density estimates in different contexts have been provided by Farina and Valdinoci \cite{fv}, Savin and Valdinoci \cite{sv},\cite{sv2}, and Sire and Valdinoci \cite{siv}. Set
\begin{eqnarray}\label{0a-v-def}
\left\{\begin{array}{l}
A_R=\int_{B_R\cap\{\vert u-a\vert\leq\lambda\}}W(u) dx,\\\\
V_R=\mathcal{L}^n(B_R\cap\{\vert u-a\vert>\lambda\})
\end{array}\right.
\end{eqnarray}
where $\mathcal{L}^n$ stands for the $n$-dimensional Lebesgue measure. Note that $A_R$ satisfies $A_R\leq C R^{n-1}$ by (\ref{basic-est}). In the context of diffuse interfaces $A_R$ measures interface area while $V_R$ enclosed volume (\cite{cc}).
\begin{theoremA}\label{main-t}
Under (\ref{key-hyp}) and regularity of $W$ as in {\rm{(HA)}} in the next section, for $u:\R^n\rightarrow\R^m$, minimal, $\| u\|_{L^\infty}<\infty$, the following holds for $0<\lambda<\text{dist}(a,\{W=0\}\setminus\{a\})$:
\[\begin{split}
&\text{ If }\quad\quad\; V_1\geq\mu_0>0,\\\\
&\text{ then }\quad V_R\geq CR^n,\;R\geq 1,\;C=C(\mu_0, \lambda, \| u\|_{L^\infty}).
\end{split}\]
\end{theoremA}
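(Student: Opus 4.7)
My plan follows the classical Caffarelli--C\'ordoba scheme: derive a growth inequality for $V_r$ from minimality, then integrate.

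First I would construct, for each $r \geq 1$ and a small parameter $\delta > 0$, a competitor $v: B_r \to \R^m$ agreeing with $u$ on $\partial B_r$ and designed to ``retract'' $u$ toward $a$ on the inner ball $B_{r-\delta}$. A natural choice is $v = T_\lambda \circ u$ on $B_{r-\delta}$, where $T_\lambda: \R^m \to \overline{B_\lambda(a)}$ is the nearest-point projection onto the closed ball of radius $\lambda$ about $a$, smoothly interpolated to $u$ in the annulus $r-\delta \leq |x| \leq r$ (say by a linear-in-$|x|$ homotopy). Since $T_\lambda$ is $1$-Lipschitz and fixes $\overline{B_\lambda(a)}$, the competitor satisfies $|v - a| \leq \lambda$ and $|\nabla v| \leq |\nabla u|$ on $B_{r-\delta}$, and $W(v)$ is bounded on that ball by a constant $W_\lambda$ depending only on $\|u\|_{L^\infty}$, $\lambda$, and $W$ (via the regularity (HA)).

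Minimality yields $J_{B_r}(u) \leq J_{B_r}(v)$. Splitting the integrals between $B_{r-\delta}$ and the annulus, using the gradient/potential bounds on $v$, and invoking the basic estimate (\ref{basic-est}) to handle the $L^2$-norm of $u-a$ and of $\nabla u$ on the annular shell, the comparison rearranges, after a coarea decomposition in the radial variable and optimization in $\delta$, to an inequality of the form
\[
\frac{d}{dr} V_r \;\geq\; c\, V_r^{(n-1)/n}
\]
a.e.\ in $r$, with $c = c(\mu_0, \lambda, \|u\|_{L^\infty})$. Integrating from $r=1$, where $V_1 \geq \mu_0$, gives $V_r^{1/n} \geq c'(r-1) + \mu_0^{1/n}$, hence $V_R \geq C R^n$ for every $R \geq 1$.

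The main obstacle is that the scalar truncation $\max(u, 1-\lambda)$ used in \cite{cc} has no order-free analog in the vector setting. The retraction $T_\lambda$ is a substitute, but controlling $W(T_\lambda(u))$ on the ``bad'' set $\{|u-a|>\lambda\}$, where $T_\lambda(u)$ lies on the sphere $\partial B_\lambda(a)$ on which $W$ need not vanish, introduces an extra term proportional to $W_\lambda V_{r-\delta}$ in the energy comparison. Absorbing this term into the growth inequality for $V_r$, rather than letting it dominate, is the delicate step; this is where the isolation hypothesis (\ref{key-hyp}) combined with (HA) enters crucially, ensuring that $W_\lambda$ is a genuine positive constant bounded independently of the structure of $\{W=0\}\setminus\{a\}$, and that $W$ can be controlled quadratically near $a$ so that the interpolation cost on the annulus remains subdominant to the $V_r^{(n-1)/n}$ term.
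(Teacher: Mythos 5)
Your competitor $T_\lambda\circ u$ is, in the paper's notation, exactly the polar-form variation $\sigma=a+q^\sigma\nu^u$ with $q^\sigma=\min\{q^u,\lambda\}$: you truncate the modulus $|u-a|$ while freezing the direction $\nu^u=(u-a)/|u-a|$. That is indeed the paper's key vectorial idea, so your starting point is sound. The first genuine gap is that the isoperimetric exponent is asserted rather than produced: nothing in ``coarea in the radial variable and optimization in $\delta$'' generates a power $V_r^{(n-1)/n}$. In the paper (following Caffarelli--C\'ordoba and Valdinoci) this exponent comes from the Sobolev inequality applied to the cutoff $\beta=\min\{q^u-q^\sigma,\lambda\}$, which vanishes on $\partial B_R$ and equals $\lambda$ on the interior bad set, yielding $\lambda^2 V_{R-T}^{(n-1)/n}\leq C\int|\nabla\beta|\,|\beta|\,dx$ before Young's inequality and minimality are invoked. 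This step is entirely absent from your sketch and cannot be bypassed.

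The second gap is the one you flag but do not resolve, and it is fatal as stated: retracting only to the sphere $|z-a|=\lambda$ leaves the competitor with potential energy of order $w_\lambda V_{r-\delta}$ on the bad set, since $0<\lambda<d_0$ forces $W\geq w_\lambda>0$ on that sphere. In the minimality comparison this term sits on the competitor's side, i.e., with the unfavorable sign; because $V\gg V^{(n-1)/n}$ when $V$ is large, it swamps any isoperimetric gain, and you cannot instead claim $W(u)\geq W(T_\lambda u)$ on the bad set, since the monotonicity of $s\mapsto W(a+s\nu)$ guaranteed by (HA) is only local near $a$. The paper's resolution is to drive the modulus essentially to $0$ in the interior: $q^h=0$ on $B_{R-T}$ when $0<\alpha<2$ (so $W(\sigma)=W(a)=0$ there), or $q^h=\varphi M$ with $\Delta\varphi=c_1\varphi$, hence exponentially small, when $\alpha=2$. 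Moreover the transition profile is chosen so that $\Delta q^h\leq C_1(q^h)^{\tau-1}$, which is what controls, after integration by parts, the cross term $\int\nabla q^\sigma\cdot\nabla(q^u-q^\sigma)\,dx$ that your linear-in-$|x|$ interpolation leaves uncontrolled. Finally, what results is not your clean ODE for $V_r$ but a discrete difference inequality coupling $V$ with $A_r=\int_{B_r\cap\{q^u\leq\lambda\}}W(u)\,dx$; the $A$-terms arise from absorbing $\frac{1}{A}\int(q^u-q^\sigma)^2$ over the good set via the growth of $W$ near $a$ in (HA), and are closed out using the upper bound $A_R\leq CR^{n-1}$ of Lemma 2.3.
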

\noindent The new points in the proof of Theorem A are the {\it polar form}
\begin{equation}\label{0polar}
\begin{split}
&u(x)=a+q^u(x)\nu^u(x),\\ & q^u(x)=\vert u(x)-a\vert,\quad \nu^u(x)=\frac{u(x)-a}{\vert u(x)-a\vert},
\end{split}
\end{equation}
the choice of the test functions which are limited to perturbations of the modulus $q^u$ and keep $\nu^u$ fixed,
\begin{equation}\label{0sigma-def}
\sigma=a+q^\sigma\nu^u,\quad q^\sigma=\min\{q^h,q^u\},
\end{equation}
and the resulting identity
\begin{equation}\label{0identity}
\begin{split}
&\frac{1}{2}\int_{B_R}(\vert\nabla q^u\vert^2-\vert\nabla q^\sigma\vert^2) dx\\&= J_{B_R}(u)-J_{B_R}(\sigma)+\frac{1}{2}\int_{B_R}\Big((q^\sigma)^2-(q^u)^2\Big)\vert\nabla\nu^u\vert^2 dx + \frac{1}{2}\int_{B_R}\Big(W(\sigma)-W(u)\Big) dx\\&\leq\frac{1}{2}\int_{B_R}\Big(W(\sigma)-W(u)\Big) dx
\end{split}
\end{equation}
where minimality on balls was used in the last inequality. The proof of Theorem A otherwise follows closely the argument in Caffarelli-Cordoba \cite {cc}.

We give a number of applications of Theorem A. We mention here a few and refer the reader to the main body of the paper for the precise statements.
\begin{enumerate}
\item \underline{Lower Bound}

For the phase transition model (a) above, under the hypotheses of Theorem A, and provided $u$ is not a constant, the lower bound holds
\begin{equation}\label{0lower-bound}
\int_{B_R(x_0)}\Big(\frac{1}{2}\vert\nabla u\vert^2
+W(u)\Big) dx\geq C R^{n-1},\quad R\geq R(x_0),
\end{equation}
$C>0$ independent of $x_0$.

We recall that for \underline{all} nonconstant solutions to (\ref{system}) and any $W\geq 0$ which allows $u\in W_{\rm loc}^{1,2}\cap L^\infty$ the estimate
\begin{equation}\label{0lower-bound-gen}
\int_{B_R(x_0)}\Big(\frac{1}{2}\vert\nabla u\vert^2
+W(u)\Big) dx\geq C R^{n-2}
\end{equation}
holds, and that (\ref{0lower-bound-gen}) can not in general be improved (Alikakos \cite{a}). In light of (\ref{basic-est}) estimate (\ref{0lower-bound}) is optimal.
\item \underline{Liouville-Rigidity Theorem}

If $u:\R^n\rightarrow\R^m$ is a bounded solution to (\ref{system}), minimal, and if either $\{W=0\}=\{a\}$, or $\inf_x d(u(x),\{W=0\}\setminus\{a\})>0,$ Then \[u\equiv a\]

This was proved in Fusco \cite{fu} with a different, though related method.
\item \underline{Linking}

For global minimizers of $J_\epsilon(u)=\int_D\Big(\frac{\epsilon^2}{2}\vert\nabla u\vert^2
+W(u)\Big) dx$, for $D$ open, bounded, with Dirichlet conditions on $\partial D$, and $W$ with exactly two minima, $W(a_1)=W(a_2)=0$, $W>0$ on $\R^m\setminus\{a_1,a_2\}$, $S_\epsilon=\{\vert u_\epsilon-a_j\vert=\gamma\}$, $\gamma\in(0,\vert a_1-a_2\vert)$  converges uniformly as $\epsilon\rightarrow 0^+$ to the minimal partition with Dirichlet conditions.

The proof is completely analogous to the corresponding scalar result in Caffarelli-Cordoba \cite{cc}.

Entire equivariant (minimal) solutions to (\ref{system}) correspond to minimal cones and possess a hierarchical structure at least for a class of symmetries. They were established by Bronsard, Gui and Schatzman \cite{bgs} for triple junctions, $n=m=2$, and by Gui and Schatzman \cite{gs} for quadruple junctions ($n=m=3$) and for general $n, m$ in a series of papers \cite{af2} \cite{a3} \cite{fu1}. In the papers \cite{bgs} \cite{gs} the hierarchical structure is built in, while in \cite{af2} \cite{a3} \cite{fu1} can be deduced a posteriori (see \cite{af4}).
\end{enumerate}

Our next theorem concerns an aspect that has no scalar counterpart. We look at the simplest possible set up for this kind of result. Consider (\ref{system}) in the class of symmetric solutions
\[u(\hat{x})=\hat{u}(x)\]
where for $z\in\R^d$ we denote by $\hat{z}$ the reflection of $z$ in the plane $\{z_1=0\}$,
\[\hat{z}=(-z_1,z_2\ldots,z_d),\]
and we take $W$ a $C^3$ potential, symmetric $W(u)=W(\hat{u}),\;u\in R^m$, and with exactly two minima $W(a_-)=W(a_+)=0$, $W>0$ on $\R^m\setminus\{a_+,a_-\}$. Under hypotheses of nondegeneracy for $a_+,a_-$  there is such a symmetric solution, minimal in the symmetric class, and satisfying the estimate
\[\vert u-a_+\vert+\vert\nabla u\vert\leq Ke^{-k x_1},\quad x_1\geq 0.\]
Consider the {\it Action}
\[A(v)=\int_\R\Big(\frac{1}{2}\vert v_s\vert^2
+W(v)\Big) ds\]
for symmetric $v\in W_{\mathrm loc}^{1,2}(\R;\R^m)\cap L^\infty(\R;\R^m)$, that connect at infinity the minima, $\lim_{s\rightarrow\pm\infty}v(s)=a_\pm$.

The key hypotheses in our theorems is that $A$ has a hyperbolic global minimum $e$ in the symmetric class. Following \cite{af4} we define the {\it Effective-Potential}
\begin{equation}\label{0eff-pot}
\mathcal{W}(v(\cdot))=A(v(\cdot))-A(e(\cdot))\geq 0
\end{equation}
and thus we have that
\begin{equation}\label{0eff-pot-e}
\mathcal{W}(e)=0,\;e\;\text{ isolated in }\;\{\mathcal{W}=0\}
\end{equation}
 (cfr. (\ref{key-hyp})) above.

The basic estimate in the present context is
\begin{equation}\label{0basic-est}
0\leq\int_{C_R(y_0)}\Big((\frac{1}{2}\vert\nabla u\vert^2
+W(u))-A(e)\Big) dx\leq C R^{n-2},
\end{equation}
$C_R(y_0)$ the cylinder $\R\times\mathcal{B}_R(y_0)$, $\mathcal{B}_R(y_0)$ the $R$-ball in $\R^{n-1}$ with center at $y_0\in\R^{n-1}$, $x=(s,y)$. Set
\[\|f\|=(\int_R\vert f(s)\vert^2 ds)^\frac{1}{2},\;\;f:\R\rightarrow\R^m\]
and by analogy to (\ref{0a-v-def})
\begin{eqnarray}\label{0a-v-def1}
\left\{\begin{array}{l}
\mathcal{A}_R=\int_{\mathcal{B}_R(y_0)\cap\{y: \| u(\cdot,y)-e(\cdot)\|\leq\lambda\}}\mathcal{W}(u) dy,\\\\
\mathcal{V}_R=\mathcal{L}^{n-1}(\mathcal{B}_R\cap\{y: \| u(\cdot,y)-e(\cdot)\|>\lambda\}).
\end{array}\right.
\end{eqnarray}
Note that $\mathcal{A}_R\leq C R^{n-2}$ by (\ref{0basic-est}).
\begin{theoremB}\label{main-fun}
Let $u$ symmetric, and minimal in the symmetry class, as above. Under {\rm(\ref{0eff-pot-e})} in the $\|\cdot\|$ sense, there is $\lambda^*>0$ such that, for $0<\lambda<\lambda^*$ the following holds:
\[\begin{split}
&\text{ If }\quad\quad\; \mathcal{V}_1\geq\mu_0>0,\\
&\text{ then }\quad \mathcal{V}_R\geq C R^{n-1},\;R\geq 1,\;C=C(\mu_0, \lambda, \| u\|_{L^\infty}).
\end{split}\]
\end{theoremB}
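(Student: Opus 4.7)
The plan is to reduce Theorem~B to a Caffarelli-Cordoba style density estimate in $\R^{n-1}$, where the target is the infinite-dimensional space of admissible symmetric paths and the potential is the effective $\mathcal{W}$. Mimicking (\ref{0polar}), I would introduce the slice-wise polar decomposition
\[
u(s,y)=e(s)+\rho(y)\,\eta(s,y),\quad \rho(y)=\|u(\cdot,y)-e(\cdot)\|,\quad \eta(\cdot,y)=\frac{u(\cdot,y)-e(\cdot)}{\rho(y)},
\]
so that $\|\eta(\cdot,y)\|=1$ and both $\eta(\cdot,y)$ and $u(\cdot,y)$ remain in the symmetric class. Differentiating $\|\eta\|^2=1$ yields the $L^2$-orthogonality $\langle\eta,\partial_{y_i}\eta\rangle=0$, hence
\[
\int_\R|\partial_{y_i}u|^2\,ds=|\partial_{y_i}\rho|^2+\rho^2\|\partial_{y_i}\eta\|^2,
\]
and together with $\int_\R(\tfrac12|\partial_s u|^2+W(u))\,ds=A(e)+\mathcal{W}(u(\cdot,y))$ the cylinder energy becomes
\[
J_{C_R}(u)-A(e)\,\mathcal{L}^{n-1}(\mathcal{B}_R)=\int_{\mathcal{B}_R}\Big[\tfrac12|\nabla_y\rho|^2+\tfrac12\rho^2\|\nabla_y\eta\|^2+\mathcal{W}(u(\cdot,y))\Big]\,dy,
\]
an effective Ginzburg-Landau functional in $y\in\R^{n-1}$ in which the centrifugal term $\tfrac12\rho^2\|\nabla_y\eta\|^2$ plays the role of $\tfrac12(q^u)^2|\nabla\nu^u|^2$ from the proof of Theorem~A.

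Next, mirroring (\ref{0sigma-def}), I would test minimality against the competitor
\[
\sigma(s,y)=e(s)+q^\sigma(y)\,\eta(s,y),\qquad q^\sigma(y)=\min\{q^h(y),\rho(y)\},
\]
with $q^h$ a barrier in $y$ modelled on \cite{cc}, chosen so that $\sigma=u$ on $\partial C_R$; because $q^\sigma\le\rho$, the field $\sigma$ stays in the symmetric class. The same computation applied to $\sigma$, combined with $J_{C_R}(u)\le J_{C_R}(\sigma)$ and the pointwise inequality $\rho^2-(q^\sigma)^2\ge 0$ (which discards the centrifugal contribution), produces the effective analogue of (\ref{0identity}),
\[
\tfrac12\int_{\mathcal{B}_R}\bigl(|\nabla_y\rho|^2-|\nabla_y q^\sigma|^2\bigr)\,dy\le\int_{\mathcal{B}_R}\bigl[\mathcal{W}(\sigma(\cdot,y))-\mathcal{W}(u(\cdot,y))\bigr]\,dy.
\]
Once the two-sided control of $\mathcal{W}$ discussed below is available, the argument of \cite{cc} runs verbatim in dimension $n-1$ with effective potential $\mathcal{W}$ and modulus $\rho$: combining this inequality with the basic bound $\mathcal{A}_R\le CR^{n-2}$ from (\ref{0basic-est}) produces a discrete differential inequality for $\mathcal{V}_R$ that iterates to $\mathcal{V}_R\ge CR^{n-1}$.

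The main obstacle lies in verifying the two nondegeneracy properties of $\mathcal{W}$ on which the iteration rests. Hyperbolicity of $e$ in the symmetric class delivers the quadratic lower bound $\mathcal{W}(v)\ge c\|v-e\|^2$ for $\|v-e\|\le\lambda^*$, fixing the threshold in the statement. The far-field lower bound $\mathcal{W}(v)\ge\delta(\lambda,M)>0$ for $\|v-e\|\ge\lambda/2$ and $\|v\|_{L^\infty}\le M$ is less automatic than its finite-dimensional counterpart in Theorem~A, since the admissible set of symmetric paths is infinite-dimensional; it must be extracted from lower semicontinuity of $\mathcal{W}$ in the weak $W^{1,2}_{\mathrm{loc}}\cap L^\infty$ topology together with the isolation hypothesis (\ref{0eff-pot-e}), which forces any sequence of symmetric paths with $\mathcal{W}\to 0$ under a uniform $L^\infty$ bound to converge to $e$ in $\|\cdot\|$. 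The regularity of $\rho$ and $\eta$ in $y$ needed to interpret $|\nabla_y\rho|^2$ and $\|\nabla_y\eta\|^2$ is standard, since $u\in W^{1,2}_{\mathrm{loc}}$ and the map $v\mapsto\|v-e\|$ is smooth off $\{e\}$.
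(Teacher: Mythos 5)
Your proposal follows essentially the same route as the paper: the polar form of $u(\cdot,y)$ relative to $e$, competitors that perturb only the modulus $q^u(y)=\|u(\cdot,y)-e(\cdot)\|$ while freezing the direction $\nu^u$, the resulting energy identity with the effective potential $\mathcal{W}=A(\cdot)-A(e)$, and the reduction to a Caffarelli--Cordoba iteration in $\R^{n-1}$. Two points in your sketch need adjustment, though. First, since hyperbolicity of $e$ makes $\mathcal{W}$ \emph{quadratically} nondegenerate near $e$ (the paper proves $D_{qq}\mathcal{W}(e+q\nu)\geq c_0$ on $[0,\bar q]$ via the spectral bound on $T$ plus an interpolation inequality), the barrier $q^h$ cannot be taken to vanish on an inner ball, and the simple difference inequality closed by $\mathcal{A}_R\leq CR^{n-2}$ — which is the sub-quadratic ($\alpha<2$) scheme — does not apply; one must use the exponentially small barrier $q^h=\varphi q_M$ with $\Delta\varphi=c_1\varphi$, $c_1<c_0$, and the geometric-series induction on annuli of the $\alpha=2$ case of Theorem~A. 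Second, the far-field lower bound $\mathcal{W}(v)\geq\delta>0$ for $\|v-e\|\geq\lambda/2$, which you identify as the main obstacle, is not actually needed for Theorem~B as stated (the restriction $\lambda<\lambda^*$ confines everything to the region where the quadratic bound operates); this is fortunate, because your proposed derivation via weak lower semicontinuity is shaky — $\|\cdot\|$ is not weakly continuous and mass can escape to $s=\pm\infty$ — and the paper instead imposes the exponential localization (\ref{grad-bound-1}) to keep the slices in a fixed bounded set of $W^{1,2}(\R;\R^m)$. A small additional step you omit is that the competitor $\sigma$ differs from $u$ on the entire unbounded cylinder $\R\times\mathcal{B}_R$, so minimality must first be extended from bounded symmetric domains to such cylinders (the paper's Lemma~\ref{l-infty}, using the exponential decay to cut off in $s$).
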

The proof of Theorem B, following \cite{af4}, implements the {\it polar form}
\[\begin{split}& u(\cdot,y)=e(\cdot)+q^u(y)\nu^u(\cdot,y),\\
& q^u(y)=\|u(\cdot,y)-e(\cdot)\|;\quad \nu^u(\cdot,y)=\frac{u(\cdot,y)-e(\cdot)}{\|u(\cdot,y)-e(\cdot)\|}
\end{split}\]
and utilizes test functions that vary only $q^u$,
\[\sigma(\cdot,y)=e(\cdot)+q^\sigma(y)\nu^u(\cdot,y),\quad q^\sigma=\min\{q^h,q^u\}\]
and employs the identity
 \begin{equation}\label{02young-1}
\begin{split}
&\frac{1}{2}\int_{\mathcal{B}_R}(\vert\nabla q^u\vert^2-\vert\nabla q^\sigma\vert^2) dy\\
&=J_{C_R}(u)-J_{C_R}(\sigma)
+\frac{1}{2}\int_{\mathcal{B}_R}\Big((q^\sigma)^2-(q^u)^2\Big)\sum_{i=1}^{n-1}\|\frac{\partial \nu^u}{\partial y_i}\|^2 dy+\int_{\mathcal{B}_R}(\mathcal{W}(\sigma)-\mathcal{W}(u))dy\\
&\leq\int_{\mathcal{B}_R}(\mathcal{W}(\sigma)-\mathcal{W}(u))dy.
\end{split}
\end{equation}
where in the last inequality minimality with respect to cylinders was used. Thus the proof, mutatis mutandis, follows Caffarelli-Cordoba \cite{cc}.

We now mention some of the applications of Theorem B and refer the reader to the main body of the paper for more information and precise statements.
\begin{enumerate}
\item Assume that the Action $A$ has exactly two global minima $e_-,e_+$, $\mathcal{W}(e_-)=\mathcal{W}(e_+)=0$, $\mathcal{W}>0$ otherwise, where $e_-,e_+$ satisfy the hypotheses of $e$ above. Assume for $u$ the hypotheses of Theorem B. Then for $0<\theta<\|e_--e_+\|$ the following is true:

    If
    \begin{equation}
\mathcal{L}^{n-1}(\mathcal{B}_1(y_0)\cap\{y:\| u(\cdot,y)-e_-(\cdot\,)\|\leq\theta\})\geq \mu_0>0\hskip1.5cm
\label{02mu0-cond-1}
\end{equation}
Then
\begin{equation}
\mathcal{L}^{n-1}(\mathcal{B}_R(y_0)\cap\{y:\| u(\cdot,y)-e_-(\cdot\,)\|\leq\theta\})\geq C R^{n-1}
\label{0mu-cond-1}
\end{equation}
 for $R\geq 1$,
 $C=C(\mu_0,\theta,\|u\|_{L^\infty})$, with a similar statement for $e_+$.
 \item Assume the hypothesis of Theorem B and suppose that either $\{\mathcal{W}=0\}=\{e\}$ or $\inf_y\|u(\cdot,y)-(\{\mathcal{W}=0\}\setminus\{e\})\|>0$, then
     \[u\equiv e.\]
     This was proved in \cite{af4} under the hypothesis $\{\mathcal{W}=0\}=\{e\}$ with a different though related method.
\end{enumerate}
We recall that Alama, Bronsard and Gui in \cite{abg} have established, under the hypothesis of (i) above, the existence of a solution $u:\R^2\rightarrow\R^2$ converging to $a_\pm$ as $s\rightarrow\pm\infty$, and converging to $e_\pm$ as $y\rightarrow\pm\infty$. Thus there are solutions genuinely higher dimensional connecting $e_+$ and $e_-$.
     The paper is structured as follows. In Part I Theorem A is stated and proved and its applications are presented in individual sections. Similarly in Part II Theorem B is stated and proved, followed by its applications.

\newpage

\centerline{\underline{\bf PART I}}
\section{Theorem A}
\subsection{Hypotheses and Statement}

\begin{description}
\item[({HA})] The potential $W:\R^m\rightarrow\R$ is nonnegative and $W(a)=0$ for some $a\in\R^m$.
Moreover $W\in C^\alpha(\R^m;\R)\cap C^1(\R^m\setminus\{a\};\R)$.

If $0<\alpha<2$ we assume
\begin{equation}
 W_u(a+\rho\nu)\cdot\nu\geq C^*\rho^{\alpha-1},\;\text{ for }\;0<\rho\leq\rho_0,\;\vert\nu\vert=1
\label{wu-lower-bound}
\end{equation}
where $\cdot$ denotes the Euclidean inner product in $\R^m$, $C^*$ a positive constant.

If $\alpha=2$ we assume, for some constant $C_0>0$,
\begin{equation}
 W_{uu}(a)\nu\cdot\nu\geq C_0>0,\;\text{ for }\;\vert\nu\vert=1.\hskip2cm
\label{wu-lower-bound-1}
\end{equation}
The figure % Figure \ref{fig:1} 
below shows the behavior of $W$ for different values of $\alpha$.

\

\

\

\begin{figure}[h]
\centering
\setlength{\unitlength}{.2mm}
\begin{picture}(200,50)(0,-170)
\put(-100,-150){\line(1,0){160}} \put(100,-150){\line(1,0){160}}
\put(-20,-150){\line(0,1){100}}\put(180,-150){\line(0,1){100}}
\put(-60,-175){$0<\alpha<1$}\put(140,-175){$1\leq\alpha\leq 2$}
\put(60,-145){$u$}\put(260,-145){$u$}
\put(-15,-50){$W$}\put(185,-50){$W$}
\qbezier(-20,-150)(10,-85)(55,-40)
\qbezier(-20,-150)(-50,-85)(-95,-40)

\qbezier(180,-150)(205,-148)(230,-100)
\qbezier(230,-100)(245,-70)(255,-40)

\qbezier(180,-150)(155,-148)(130,-100)
\qbezier(130,-100)(115,-70)(105,-40)

\end{picture}
\label{fig:1}
\end{figure}

\

\item[({HB})] $u:D\subset\R^n\rightarrow\R^m$, $u\in W_{\rm loc}^{1,2}(D;\R^m)\cap L^\infty(D;\R^m)$, is {\it minimal} in the sense that
\begin{equation}
J_{\Omega}(u)\leq J_{\Omega}(u+v),\;\text{ for }\;v\in W_0^{1,2}(\Omega;\R^m)
 \label{energy-inequality}
 \end{equation}
 for every open bounded set $\Omega\subset D$, where
 \begin{equation}
 J_\Omega(u)=\int_\Omega(\frac{1}{2}\vert\nabla u\vert^2+W(u))dx.
 \label{energy-def}
 \end{equation}

 \begin{equation}
 \vert u-a\vert<M,\;\;\vert\nabla u\vert<M,\;\text{ on }\;\R^n.
 \label{grad-bound}
 \end{equation}
\end{description}
\underline{Note}: In the proof of Theorem A we utilize minimality only on balls.

For each $z\in\R^k$, $k\geq 1$ and $r>0$ we let $B_r(z)\subset\R^k$ be the open ball of center $z$ and radius $r$ and $B_r$ the ball centered at the origin. We denote by $\mathcal{L}^k(E)$ the $k$-dimensional Lebesgue measure of a measurable set $E\subset\R^k$.

\begin{theoremA}\label{main}
Under hypothesis {\rm({HA})} and {\rm({HB})} above, for any $\mu_0>0$ and any $0<\lambda<d_0={\rm dist}(a,\{W=0\}\setminus\{a\})$, the condition
\begin{equation}
\mathcal{L}^n(B_1(x_0)\cap\{\vert u-a\vert>\lambda\})\geq\mu_0\hskip1.5cm,
\label{mu0-cond}
\end{equation}
provided $B_R(x_0)\subset\Omega$, implies the estimate
\begin{equation}
\mathcal{L}^n(B_R(x_0)\cap\{\vert u-a\vert>\lambda\})\geq C R^n,\;\text{ for }\;R\geq 1
\label{mu-cond}
\end{equation}
where $C=C(\mu_0,\lambda,M)$, $C$ independent of $x_0$ and independent of $u$.
\end{theoremA}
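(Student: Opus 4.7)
Following Caffarelli-Cordoba \cite{cc}, the plan is to establish a one-step isoperimetric growth inequality of the form $V_{r+1} - V_r \ge c\, V_r^{(n-1)/n}$ valid for every $r \ge 1$, which by the standard De Giorgi iteration (equivalently, the ODE comparison $\phi' \ge c\phi^{(n-1)/n}$ for $\phi(r) = V_r$) yields $V_R \ge CR^n$. The seed $V_1 \ge \mu_0$ is supplied directly by hypothesis \eqref{mu0-cond}, so everything reduces to the one-step inequality.

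\textbf{Polar-form competitor and reduction to a scalar comparison.} Assume $x_0 = 0$. Fix $\gamma \in (0, \lambda)$ and construct a radial barrier $q^h(x) = h(|x|)$ with $h \equiv \gamma$ on $[0, r]$, $h$ rising linearly from $\gamma$ to $H > M$ on $[r, r+1]$, and $h \equiv H$ thereafter; then set $\sigma = a + q^\sigma \nu^u$ with $q^\sigma = \min\{q^u, q^h\}$ as in \eqref{0sigma-def}. Since $q^h > M \ge q^u$ off $B_{r+1}$, $\sigma \equiv u$ there and $\sigma$ is admissible in the minimality test on $B_{r+1}$. Applying the identity \eqref{0identity} together with $J_{B_{r+1}}(u) \le J_{B_{r+1}}(\sigma)$ and the sign observation $(q^\sigma)^2 \le (q^u)^2$ (which makes the $|\nabla \nu^u|^2$ term non-positive, hence droppable) yields, after cancellation off the modification set $A := \{q^h < q^u\}$, the scalar-type comparison
\begin{equation*}
\int_A \bigl(|\nabla q^u|^2 + W(u)\bigr)\,dx \;\le\; \int_A \bigl(|\nabla q^h|^2 + W(a + q^h \nu^u)\bigr)\,dx.
\end{equation*}
This is precisely the inequality one would have if $q^u$ were a scalar minimizer of an Allen-Cahn functional with potential $\widetilde W(q;\nu) = W(a + q\nu)$, so the Caffarelli-Cordoba isoperimetric machinery applies to it.

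\textbf{Caffarelli-Cordoba finish and main obstacle.} Lower-bound the left-hand side by using $A \supseteq B_r \cap \{q^u > \lambda\}$ and the positivity of $W$ on the set $\{\lambda \le |v - a| \le d_0\}$ (which follows from compactness together with \eqref{wu-lower-bound}/\eqref{wu-lower-bound-1} and $\lambda < d_0$); any loss coming from values with $|u-a| > d_0$ is absorbed by the $|\nabla q^u|^2$ piece via a coarea slicing of $q^u$ across level $\lambda$, as in the scalar argument, producing a bound of order $V_r^{(n-1)/n}$. Upper-bound the right-hand side on $A \cap (B_{r+1}\setminus B_r)$ using $|\nabla q^h| \le H-\gamma$ and the boundedness of $W(a + q^h\nu^u)$; the sharp coarea/isoperimetric step of \cite{cc} then controls this by $C\,(V_{r+1}-V_r)$. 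Matching the two yields $V_r^{(n-1)/n} \le C(V_{r+1}-V_r)$, i.e.\ the sought one-step inequality. The principal obstacle, absent in the scalar case, is the degeneracy of $\nu^u$ at points where $u = a$; the choice $h \ge \gamma > 0$ keeps $\sigma$ uniformly away from $a$ on $A$, and the inequality $(q^\sigma)^2 \le (q^u)^2$ is exactly what permits discarding the $|\nabla \nu^u|^2$ term in \eqref{0identity} and reducing the vector problem to a faithful scalar Caffarelli-Cordoba analysis applied to $q^u$.
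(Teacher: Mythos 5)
Your overall architecture (polar form, competitors $\sigma=a+\min\{q^h,q^u\}\nu^u$ that vary only the modulus, a Caffarelli--Cordoba difference inequality) is the right one and matches the paper, but two of your steps break down. First, the barrier floor $q^h\equiv\gamma>0$ on $B_r$ is fatal: on $A\cap B_r=\{q^u>\gamma\}\cap B_r$ the right-hand side of your scalar comparison contains $\int W(a+\gamma\nu^u)\,dx\gtrsim\gamma^\alpha\,\mathcal{L}^n(B_r\cap\{q^u>\gamma\})$, a bulk term of the same order $r^n$ as the quantity you are trying to bound from below; you estimate the right-hand side only on the annulus and never account for it. Since $\{W=0\}\setminus\{a\}$ may be nonempty, $W(u)$ can be \emph{smaller} than $W(a+\gamma\nu^u)$ on a set of measure $\sim r^n$ (wherever $u$ sits near another zero of $W$), so this term does not cancel against the left-hand side. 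The paper avoids it by taking $q^h\equiv 0$ on $B_{R-T}$ when $\alpha<2$ (and exponentially small, $q^h=\varphi M$ with $\Delta\varphi=c_1\varphi$, when $\alpha=2$); the degeneracy of $\nu^u$ at $u=a$ that motivated your floor is not an obstruction, because $\sigma$ remains a legitimate $W^{1,2}$ competitor with $q^h=0$ and the polar identity (\ref{kinetic-decomposition}) still holds there.

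Second, the one-step inequality $V_r^{(n-1)/n}\le C(V_{r+1}-V_r)$ cannot be closed in $V$ alone. The transition cost on the annulus includes the region $\{q^u\le\lambda\}\cap\{q^h<q^u\}$, whose Lebesgue measure can be a full slab $\sim r^{n-1}$ per step, which is useless as such; it must instead be measured by $\int W(u)$, i.e.\ by $A_{r+1}-A_r$ with $A_r=\int_{B_r\cap\{q^u\le\lambda\}}W(u)\,dx$, carried through the iteration, and discharged only at the end via $A_R\le CR^{n-1}$ from Lemma \ref{energy-upper-bound}; this is exactly the shape of (\ref{caff-cord}). Absorbing the Young-inequality term $\frac{C}{A}\int(q^u-q^\sigma)^2$ into $-\int W(u)$ is precisely where hypothesis (HA) (the vanishing rate $W(a+\rho\nu)\gtrsim\rho^\alpha$) enters (Claims 1 and 2), and controlling the cross term $\int\nabla q^\sigma\cdot\nabla(q^u-q^\sigma)$ forces $\Delta q^h\le C(q^h)^{\tau-1}$, which a linear ramp does not satisfy; your proposal uses neither. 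Finally, the lower bound $\lambda^2V^{(n-1)/n}$ in the paper comes from the Sobolev inequality applied to $\beta^2$ with $\beta=\min\{q^u-q^\sigma,\lambda\}$ (which vanishes on $\partial B_R$), not from the energy of $u$ on $\{q^u>\lambda\}$: that energy has density $\approx 0$ wherever $u$ is close to another zero of $W$, so your coarea slicing yields at best a relative isoperimetric bound with a $\min$ against the complementary volume and cannot replace this step.
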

As in \cite{cc} Theorem A has the following important consequence
\begin{theorem}\label{main-1}
Assume there are $a_1\neq a_2\in\R^m$ such that
\[W(a_1)=W(a_2)=0,\;\;W(u)>0,\;\text{ for }\;u\not\in\{a_1,a_2\}\]
and assume that {\rm({HA})} holds at $a=a_j,\;j=1,2$. Let $u:\R^n\rightarrow\R^m$ is a minimizer in the sense of {\rm({HB})}. Then, given $0<\theta<\vert a_1-a_2\vert$ the condition
\begin{equation}\label{mu0-cond-1}
\mathcal{L}^n(B_1(x_0)\cap\{\vert u-a_1\vert\leq\theta\})\geq\mu_0>0
\end{equation}
implies the estimate
\begin{equation}\label{mu0-cond-2}
\mathcal{L}^n(B_R(x_0)\cap\{\vert u-a_1\vert\leq\theta\})\geq C R^n,\;\text{ for }\;R\geq 1
\end{equation}
where $C>0$ depends only on $\mu_0$, $\theta$ and $M$. An analogous statement applies to $a_2$.
\end{theorem}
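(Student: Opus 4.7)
The plan is to reduce Theorem \ref{main-1} to an application of Theorem A with $a=a_2$, using the triangle inequality to translate ``$u$ close to $a_1$'' into ``$u$ far from $a_2$,'' and then exploiting the basic estimate (\ref{basic-est}) to show that most of the volume produced by Theorem A actually lies in the set $\{\vert u - a_1\vert \leq \theta\}$.

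The first step is to set $\lambda := (\vert a_1-a_2\vert - \theta)/2 > 0$, so that $\lambda < \text{dist}(a_2, \{W=0\}\setminus\{a_2\}) = \vert a_1 - a_2\vert$. If $\vert u - a_1\vert \leq \theta$, then by the triangle inequality $\vert u - a_2\vert \geq \vert a_1 - a_2\vert - \theta > \lambda$, and therefore hypothesis (\ref{mu0-cond-1}) forces $\mathcal{L}^n(B_1(x_0)\cap\{\vert u-a_2\vert > \lambda\}) \geq \mu_0$. Applying Theorem A with $a = a_2$ yields a constant $C_1 = C_1(\mu_0,\lambda,M)$ such that
\begin{equation*}
\mathcal{L}^n(B_R(x_0)\cap\{\vert u - a_2\vert > \lambda\}) \geq C_1 R^n, \quad R\geq 1.
\end{equation*}

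Next I would bound the ``interstitial'' set where $u$ avoids both minima. By (HB) the image of $u$ is contained in some bounded set $K\subset\R^m$, and since $W$ is continuous with $\{W=0\}=\{a_1,a_2\}$, the number $\delta := \inf\{W(z) : z\in K,\ \vert z-a_1\vert\geq\theta,\ \vert z-a_2\vert\geq\lambda\}$ is strictly positive. Combined with (\ref{basic-est}), this gives
\begin{equation*}
\mathcal{L}^n(B_R(x_0)\cap\{\vert u-a_1\vert>\theta\}\cap\{\vert u-a_2\vert>\lambda\}) \leq \delta^{-1}\!\int_{B_R(x_0)}\! W(u)\,dx \leq C_2 R^{n-1}.
\end{equation*}
Subtracting from the previous inequality yields $\mathcal{L}^n(B_R(x_0)\cap\{\vert u-a_1\vert\leq\theta\}) \geq C_1 R^n - C_2 R^{n-1} \geq \tfrac{1}{2}C_1 R^n$ whenever $R \geq R_0 := 2C_2/C_1$; for $1\leq R\leq R_0$ the hypothesis (\ref{mu0-cond-1}) directly gives at least $\mu_0 \geq (\mu_0 R_0^{-n}) R^n$, so the choice $C := \min(\tfrac{1}{2}C_1,\mu_0 R_0^{-n})$ completes the proof (the analogous statement for $a_2$ follows by interchanging the two minima). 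I do not anticipate any substantive obstacle: all of the analytic content is already packaged in Theorem A, and the only genuine ingredients here are the triangle-inequality inclusion and the soft continuity argument that controls the interstitial region.
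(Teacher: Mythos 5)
Your proposal is correct and follows essentially the same route as the paper: apply Theorem A at $a=a_2$ after the triangle-inequality inclusion $\{\vert u-a_1\vert\leq\theta\}\subset\{\vert u-a_2\vert>\lambda\}$, then absorb the interstitial set $\{\vert u-a_1\vert>\theta\}\cap\{\vert u-a_2\vert>\lambda\}$ using the positivity of $W$ there together with the $O(R^{n-1})$ energy upper bound of Lemma \ref{energy-upper-bound}. The only differences are cosmetic (your $\lambda$ is half the paper's, and you spell out the small-$R$ case that the paper leaves implicit).
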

\begin{proof}
Since $\vert u-a_1\vert\leq\theta$ implies $\vert u-a_2\vert>\vert a_1-a_2\vert-\theta=\lambda>0$ the assumption (\ref{mu0-cond-1}) implies
\[\mathcal{L}^n(B_1(x_0)\cap\{\vert u-a_2\vert>\lambda\})\geq\mu_0.\]
Therefore Theorem A yields
\[\mathcal{L}^n(B_R(x_0)\cap\{\vert u-a_2\vert>\lambda\})\geq C R^n,\;\text{ for }\;R\geq 1.\]
To conclude the proof we observe that
\[\{\vert u-a_2\vert>\lambda\}=\{\vert u-a_1\vert\leq\theta\}\cup(\{\vert u-a_1\vert>\theta\}\cap\{\vert u-a_2\vert>\lambda\})\]
and therefore $W(u)>0,\;\text{ for }\;u\not\in\{a_1,a_2\}$ and Lemma \ref{energy-upper-bound} below imply $\mathcal{L}^n(B_R(x_0)\cap(\{\vert u-a_1\vert>\theta\}\cap\{\vert u-a_2\vert>\lambda\})\leq C R^{n-1}$.
\end{proof}
\underline{Note:}\hskip.1cm  We note that the argument above when applied to potentials $W$ that vanish at more than two points: $W(a_1)=\cdots=W(a_N)=0$, $N\geq 3$ �, provides estimates (\ref{mu0-cond-2}) only for two of the minima, even if (\ref{mu0-cond-1}) holds for all $N$ of them. The selection of the particular two minima depends in general on $R$.
\subsection{The Proof of Theorem A}

 1.\underline{The Polar Form}

 We will utilize the {\it polar form} of a vector map $u\in W^{1,2}(A;\R^m)\cap L^\infty(A;\R^m)$, $A\subset\R^n$ open and bounded,
\begin{equation}
u(x)=a+q^u(x)\nu^u(x)
\label{polar-form}
\end{equation}
where
\begin{eqnarray}
q^u(x)=\vert u(x)-a\vert,\;\;\nu^u(x)=\left\{\begin{array}{l}
\frac{u(x)-a}{\vert u(x)-a\vert},\;\text{ if }\;u(x)\neq a,\\\\
0,\;\text{ if }\;u(x)=a.
\end{array}\right.
\label{q-and-nu}
\end{eqnarray}
We have \cite{afs} $q^u\in W^{1,2}(A)\cap L^\infty(A)$ and $\nabla\nu^u$ is measurable and such that $q^u\vert\nabla\nu^u\vert\in L^2(A)$ and
\begin{equation}
\int_A\vert\nabla u\vert^2dx=\int_A\vert\nabla q^u\vert^2dx+\int_A(q^u)^2\vert\nabla\nu^u\vert^2dx.
\label{kinetic-decomposition}
\end{equation}
Moreover for $q^h\in W^{1,2}(A)\cap L^\infty(A)$, $q^u\geq 0$
the vector function $\sigma$ defined via
\begin{equation}
\sigma=a+q^\sigma\nu^u,\;\;q^\sigma=\min\{q^h,q^u\}
\label{sigma}
\end{equation}
is in $W^{1,2}(A;\R^m)\cap L^\infty(A;\R^m)$ and satisfies the corresponding (\ref{kinetic-decomposition}).
\vskip.3cm
By the polar form (\ref{kinetic-decomposition}) of the energy and the minimality of $u$ assumed in ({HB}) it follows that
\begin{equation}\label{young-1}
\begin{split}
&\frac{1}{2}\int_{B_R}(\vert\nabla q^u\vert^2-\vert\nabla q^\sigma\vert^2) dx\\
&=J_{B_R}(u)-J_{B_R}(\sigma)
+\frac{1}{2}\int_{B_R}\Big((q^\sigma)^2-(q^u)^2\Big)\vert\nabla\nu^u\vert^2dx+\int_{B_R}(W(\sigma)-W(u))dx\\
&\leq\int_{B_R}(W(\sigma)-W(u))dx
\end{split}
\end{equation}
where we have also used the definition (\ref{sigma}) of $\sigma$ which implies $q^\sigma\leq q^u$.

\noindent 2.\underline{The Isoperimetric Inequality for Minimizers}

  We will assume that $q^h\geq q^u$ on $\partial B_R$ and therefore by (\ref{sigma}) that $q^\sigma=q^u$ on $\partial B_R$, $q^h$ to be further specified later. Define
\begin{eqnarray}
\left\{\begin{array}{l}
A_r=\int_{B_r\cap\{q^u\leq\lambda\}}W(u)dx,\\\\
V_r=\mathcal{L}^n(B_r\cap\{q^u>\lambda\}).
\end{array}\right.
\label{a-v-def}
\end{eqnarray}
We also define the cut-off function
\begin{equation}
\beta=\min\{q^u-q^\sigma,\lambda\},\;\text{ on }\;B_R,\;\lambda>0\;\text{ small. }
\label{cut-off}
\end{equation}
which is related via the map $a+\beta\nu^u$ to the variation $\sigma$ in (\ref{sigma}).
The modification in the definition of $A$ with the integration over the sub-level set together with the definition of the function $\beta$ in the context of the Caffarelli-Cordoba \cite{cc} set-up was introduced in Valdinoci \cite{v}.
By applying the inequality in \cite{eg} pag.141 to $\beta^2$ we obtain
\begin{equation}
\begin{split}
\Big(\int_{B_R}\beta^\frac{2n}{n-1}dx\Big)^\frac{n-1}{n}&=\Big(\int_{B_R}(\beta^2)^\frac{n}{n-1}dx\Big)^\frac{n-1}{n}dx\\&\leq C\int_{B_R}\vert\nabla(\beta^2)\vert dx\leq 2 C\int_{B_R\cap\{q^u-q^\sigma\leq\lambda\}}\vert\nabla\beta\vert\vert\beta\vert dx,
\end{split}\label{iso-inequality}
\end{equation}
where $C>0$ is a constant independent of $R$ and we have used $\beta=0$ on $\partial B_R$ and the fact that $\nabla\beta=0$ a.e. on $q^u-q^\sigma>\lambda$. By Young's inequality, for $A>0$ we have
\begin{equation}\label{young-0}
\begin{split}
&\Big(\int_{B_R}\beta^\frac{2n}{n-1}dx\Big)^\frac{n-1}{n}\leq 2 C\int_{B_R\cap\{q^u-q^\sigma\leq\lambda\}}\vert\nabla\beta\vert\vert\beta\vert dx\\&\leq
 C A\int_{B_R\cap\{q^u-q^\sigma\leq\lambda\}}\vert\nabla\beta\vert^2 dx+
\frac{C}{A}\int_{B_R\cap\{q^u-q^\sigma\leq\lambda\}}\beta^2 dx
\\&\leq
 C A\int_{B_R}\vert\nabla(q^u-q^\sigma)\vert^2 dx+
\frac{C}{A}\int_{B_R\cap\{q^u-q^\sigma\leq\lambda\}}(q^u-q^\sigma)^2 dx
\\&=
 C A\Big(\int_{B_R}(\vert\nabla q^u\vert^2-\vert\nabla q^\sigma\vert^2) dx-2\int_{B_R}\nabla q^\sigma\cdot\nabla(q^u-q^\sigma) dx\Big)\\&+
\frac{C}{A}\int_{B_R\cap\{q^u-q^\sigma\leq\lambda\}}(q^u-q^\sigma)^2 dx.
\end{split}
\end{equation}

From (\ref{young-0}) and (\ref{young-1}) it follows
\begin{equation}
\begin{split}
&\Big(\int_{B_R}\beta^\frac{2n}{n-1}dx\Big)^\frac{n-1}{n}\leq\\
 &\leq
 2C A\Big(\int_{B_R}(W(\sigma)-W(u))dx-\int_{B_R}\nabla q^\sigma\cdot\nabla(q^u-q^\sigma) dx\Big)\\&+
\frac{C}{A}\int_{B_R\cap\{q^u-q^\sigma\leq\lambda\}}(q^u-q^\sigma)^2 dx.
\end{split}
\label{young}
\end{equation}

\noindent 3.\underline{The case $0<\alpha<2$.}

Assume that
 $q^h\in W^{1,2}(B_R)\cap L^\infty(B_R)$ satisfies
\begin{equation}
q^h=
 0,\;\text{ on }\;B_{R-T}\;\text{ for some fixed }\; T>0.
\label{h-properties}
\end{equation}
\underline{{\it The Lower Bound}}

From (\ref{h-properties}) it follows
\begin{equation}
\Big(\int_{B_R}\beta^\frac{2n}{n-1}dx\Big)^\frac{n-1}{n}
\geq\Big(\int_{B_{R-T}\cap\{q^u>\lambda\}}\beta^\frac{2n}{n-1}\Big)^\frac{n-1}{n} dx\geq\lambda^2\mathcal{L}^n(B_{R-T}\cap\{q^u>\lambda\})^\frac{n-1}{n}
\label{lower-b}
\end{equation}
where we have also used (\ref{h-properties}) which implies $q^\sigma=0$ on $B_{R-T}$.
\vskip.3cm
\noindent\underline{{\it The Upper Bound}}

The objective is to estimate the right and side of (\ref{young}) by the first term involving the potential. Naturally the third term can be handled more easily for $\alpha<2$. For handling the second term one needs a very particular choice of $q^h$. The splitting of the integrations over $B_{R-T}$  and the rest aims at deriving a difference inequality involving the quantities in (\ref{a-v-def}), as in (\ref{caff-cord}). A major difference between $\alpha<2$ and $\alpha=2$ is in the choice of $q^h$, that can vanish on $B_{R-T}$ for $\alpha<2$, while can only be exponentially small (in $T$) for $\alpha=2$.

\noindent We begin with $B_{R-T}$.

\noindent Since $q^\sigma=0$ on $B_{R-T}$ the right hand side $I$ of (\ref{young}) on $B_{R-T}$ reduces to
\begin{equation}\label{rhs-br-t}
\begin{split}
& I=-2C A\int_{B_{R-T}}W(u)dx+
\frac{C}{A}\int_{B_{R-T}\cap\{q^u\leq\lambda\}}(q^u)^2 dx\\
&\leq-2C A\int_{B_{R-T}\cap\{q^u\leq\lambda\}}W(u)dx+
\frac{C}{A}\int_{B_{R-T}\cap\{q^u\leq\lambda\}}(q^u)^2 dx
\end{split}
\end{equation}
\noindent\underline{{\it Claim }1}
%\label{lemma-i}

{\it Assume $\lambda\leq\rho_0$, $\rho_0$ the constant in {\rm({HA})}. Then there exists $A_0>0$ independent of $R$ such that
\begin{equation}\label{i-estimate}
I\leq-C A\int_{B_{R-T}\cap\{q^u\leq\lambda\}}W(u)dx,\;\text{ for }\;A>A_0.
\end{equation}}

\begin{proof}
From ({HA}) $q^u\leq\lambda\leq\rho_0$ it follows
\[\begin{split}
& W(u)=\int_0^{q^u} W_u(a+s\nu^u)\cdot\nu^u ds\geq\frac{C^*}{\alpha}(q^u)^\alpha\\
&\text{hence}\hskip.2cm -A W(u)+\frac{1}{A}(q^u)^2\leq(q^u)^\alpha(\frac{-A C^*}{\alpha}+\frac{\lambda^{2-\alpha}}{A})
\end{split}\]
and therefore for $A>\sqrt{\alpha\lambda^{2-\alpha}/C^*}$ we obtain
\[-C A\int_{B_{R-T}\cap\{q^u\leq\lambda\}}W(u)dx+
\frac{C}{A}\int_{B_{R-T}\cap\{q^u\leq\lambda\}}(q^u)^2 dx\leq 0.\]
This and (\ref{rhs-br-t}) conclude the proof of the claim.
\end{proof}
Next we consider the right hand side of (\ref{young}) on $B_R\setminus B_{R-T}$.

\noindent Set
\[\begin{split}
& I_1=2C A\int_{B_R\setminus B_{R-T}}(W(\sigma)-W(u))dx+
\frac{C}{A}\int_{(B_R\setminus B_{R-T})\cap\{q^u-q^\sigma\leq\lambda\}}(q^u-q^\sigma)^2 dx,\\
& I_2=-2C A\int_{B_R\setminus B_{R-T}}\nabla q^\sigma\cdot\nabla(q^u-q^\sigma) dx.
\end{split}\]
\noindent\underline{{\it Claim }2}
%\label{lemma-i1}

{\it Assume $\lambda\leq\min\{\rho_0,1\}$. Then there exists constant $\tilde{C}>0$ independent of $R$ such that
\begin{equation}\label{i1-estimate}
I_1\leq\tilde{C} A\mathcal{L}^n((B_R\setminus B_{R-T})\cap\{q^u>\lambda\})+\frac{\tilde{C}}{A}\int_{(B_R\setminus B_{R-T})\cap\{q^u\leq\lambda\}}W(u) dx,\;\text{ for }\;A>0.
\end{equation}}
\begin{proof}
We split the integration in $B_R\setminus B_{R-T}$ over $\{q^u\leq\lambda\}$ and $\{q^u>\lambda\}$.
From $q^\sigma\leq q^u$, $q^u\leq\lambda\leq\rho_0$ we have
\[\int_{(B_R\setminus B_{R-T})\cap\{q^u\leq\lambda\}}(W(\sigma)-W(u)) dx\leq 0\]
and therefore from (\ref{grad-bound}) it follows
\begin{equation}\label{wsigma-wu}
\int_{B_R\setminus B_{R-T}}(W(\sigma)-W(u))dx\leq W_M\mathcal{L}^n((B_R\setminus B_{R-T})\cap\{q^u>\lambda\})
\end{equation}
where $W_M=\max_{\vert u-a\vert\leq M}W(u)$.
As in the proof of Claim 1, for $q^\sigma\leq q^u\leq\lambda\leq\min\{\rho_0,1\}$, we get
\[W(u)\geq\frac{C^*}{\alpha}(q^u)^\alpha\geq\frac{C^*}{\alpha}(q^u-q^\sigma)^\alpha\geq\frac{C^*}{\alpha}(q^u-q^\sigma)^2\]
which implies
\[\int_{(B_R\setminus B_{R-T})\cap\{q^u\leq\lambda\}}(q^u-q^\sigma)^2 dx\leq\frac{\alpha}{C^*}\int_{(B_R\setminus B_{R-T})\cap\{q^u\leq\lambda\}}W(u)dx.\]
This and (\ref{wsigma-wu}) establish Claim 2 with $\tilde{C}=C\max\{\alpha/C^*,2W_M, M^2\}$.
\end{proof}
We now complete the definition (\ref{h-properties}) of $q^h$ by setting as in \cite{cc}
\begin{equation}\label{h-def}
q^h(x)=H(\vert x\vert-(R-T))^\frac{2}{2-\tau},\;\text{ on }\;B_R\setminus B_{R-T}
\end{equation}
where $\tau=\max\{\alpha,1\}$ and $H=M/T^\frac{2}{2-\tau}$ is chosen so that $q^h=M$ on $\partial B_R$. Note that $q^h$ is $C^1$ on $B_R$ and
\begin{equation}\label{h-properties-1}
\nabla q^h=\nabla q^\sigma=0\;\text{ on }\;\partial B_{R-T}
\end{equation}
where we have also used that $q^\sigma\leq q^h$. The function $[0,T]\ni s\mapsto q(s)=H s^\frac{2}{2-\tau}$ satisfies
\begin{equation}\label{q-laplacian}
q^{\prime\prime}=c_Hq^{\tau-1},\;\;q^\prime=\sqrt{2c_H/\tau}q^\frac{\tau}{2}
\end{equation}
where $c_H$ is a constant that depends on $H$. Since $\tau<2$ implies $\tau-1<\frac{\tau}{2}$, (\ref{q-laplacian}) yields
\begin{equation}\label{q-laplacian-est}
\Delta q^h\leq C_1(q^h)^{\tau-1}
\end{equation}
with $C_1>0$ independent of $R$.
\vskip.3cm
\noindent\underline{{\it Claim }3}
%\label{lemma-i2}

{\it There exists $\hat{C}>0$ independent of $R$ such that
\begin{equation}\label{i2-estimate}
\begin{split}
& I_2\leq\hat{C} A\mathcal{L}^n((B_R\setminus B_{R-T})\cap\{q^u>\lambda\})\\
&+\hat{C} A\int_{(B_R\setminus B_{R-T})\cap\{q^u\leq\lambda\}}W(u) dx,\;\text{ for }\;A>0.
\end{split}
\end{equation}
}
\begin{proof}
From (\ref{h-properties-1}) and $q^u=q^\sigma$ on $\partial B_R$ and integration by parts it follows
\begin{equation}\label{by-parts}
I_2=2C A\int_{(B_R\setminus B_{R-T})}\Delta q^\sigma(q^u-q^\sigma)dx
=2C A\int_{(B_R\setminus B_{R-T})\cap\{q^h<q^u\}}\Delta q^h(q^u-q^h)dx
\end{equation}
where we have observed that $q^u=q^\sigma$ on the set $\{q^h\geq q^u\}$ and that $q^\sigma=q^h$ on the set $\{q^h<q^u\}$.
From (\ref{q-laplacian-est}) and $q^h\leq q^u$ it follows
\begin{equation}\label{i2-estimate-1}
\begin{split}
I_2&\leq 2C C_1 A\int_{B_R\setminus B_{R-T}\cap\{q^h<q^u\}} (q^h)^{\tau-1}(q^u-q^h)dx\\
&\leq 2C C_1 A\int_{B_R\setminus B_{R-T}\cap\{q^h<q^u\}} (q^u)^\tau dx.
\end{split}
\end{equation}
As before we split the integration over $\{q^u\leq\lambda\}$ and $\{q^u>\lambda\}$.
To conclude the proof we observe that $\lambda\leq\min\{\rho_0,1\}$ and ({HA}) imply
\[(q^u)^\tau\leq(q^u)^\alpha\leq\frac{\alpha}{C^*}W(u),\;\text{ on }\;\{q^u\leq\lambda\}\]
while (\ref{grad-bound}) implies
\[(q^u)^\tau\leq M^\tau,\;\text{ on }\;\{q^u>\lambda\}.\]
\end{proof}
\noindent We are now in the position of completing the proof of Theorem A for the case $0<\alpha<2$. By recalling the definition  of $A_R$ and $V_R$ in (\ref{a-v-def}) and by collecting all the estimates (\ref{lower-b}),(\ref{i-estimate}),(\ref{i1-estimate}) and (\ref{i2-estimate}) we have for fixed $A>A_0$
\[
\lambda^2(V_{R-T})^\frac{n-1}{n}+CA\,A_{R-T}\leq(\tilde{C}+
\hat{C})A\Big(V_R-V_{R-T}\Big)
+(\frac{\tilde{C}}{A}+\hat{C}A)\Big(A_R-A_{R-T}\Big)\]
and consequently
\begin{equation}\label{caff-cord}
C(\lambda)\Big((V_{R-T})^\frac{n-1}{n}+V_{R-T}\Big)\leq(V_R-V_{R-T})
+(A_R-A_{R-T})
\end{equation}
with $C(\lambda)=\frac{\min\{\lambda^2,CA\}}{\max\{(\tilde{C}+\hat{C})A,\frac{\tilde{C}}{A}+\hat{C}A\}}$. Equation (\ref{caff-cord}) is exactly the difference scheme in \cite{cc}. Therefore as in \cite{cc}, using also the assumption (\ref{mu0-cond}), we deduce that there are $C(\lambda,\mu_0)>0$ and $k_0\geq 1$ such that
\begin{equation}\label{caff-cord-1}
V_{kT}+A_{kT}\geq C(\lambda,\mu_0)k^n,\;\text{ for }\;k\geq k_0.
\end{equation}
To complete the argument we recall the basic estimate (\ref{energy-upper-bound-1}) below (c.f. Lemma 1 in \cite{cc} for the scalar case. The proof is similar for the vector case)
\begin{lemma}\label{energy-upper-bound}
Assume that $W$ satisfies {\rm({HA})} and assume that $u$ is minimal as defined in {\rm({HB})}. Then there is a constant $C>0$, depending on $M$, independent of $\xi$ and such that
\begin{equation}\label{energy-upper-bound-1}
\int_{B_R(\xi)}\Big(\frac{1}{2}\vert\nabla u\vert^2+W(u)\Big)dx\leq C R^{n-1},\;\text{ for }\;R>0.\end{equation}
\end{lemma}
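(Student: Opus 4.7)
My plan is to prove the bound by a direct comparison with an explicit competitor that equals $a$ on a concentric ball of radius $R-1$ and transitions linearly back to the boundary values of $u$ in the unit-thickness shell. The construction uses in an essential way the two $L^\infty$ bounds in (\ref{grad-bound}): $|u - a| \leq M$ controls both $W$ and the boundary modulus of the competitor, while $|\nabla u| \leq M$ controls the Dirichlet energy it pays in the shell.

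For $R \leq 1$ the estimate is immediate: with $W_M := \max_{|w-a|\leq M} W(w)$ (finite by (HA)), one has $\frac{1}{2}|\nabla u|^2 + W(u) \leq \frac{1}{2}M^2 + W_M$ pointwise, so $J_{B_R(\xi)}(u) \leq CR^n \leq CR^{n-1}$. I would therefore restrict to $R \geq 1$ and take a piecewise-linear radial cut-off $\eta:[0,\infty)\to[0,1]$ with $\eta\equiv 0$ on $[0,R-1]$, $\eta(R)=1$, and $|\eta'|\leq 1$ throughout. The competitor would be
\[
v(x) := a + \eta(|x-\xi|)\,(u(x) - a),
\]
which lies in $W^{1,2}(B_R(\xi);\R^m) \cap L^\infty$, equals $a$ on $B_{R-1}(\xi)$, and agrees with $u$ on $\partial B_R(\xi)$, so $v - u \in W_0^{1,2}(B_R(\xi);\R^m)$ is an admissible perturbation for the minimality inequality (\ref{energy-inequality}).

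The energy of $v$ is concentrated on the shell $B_R(\xi)\setminus B_{R-1}(\xi)$, whose volume is at most a constant multiple of $R^{n-1}$ for $R\geq 1$. A direct differentiation yields $|\nabla v|^2 \leq 2(\eta')^2|u-a|^2 + 2\eta^2|\nabla u|^2 \leq 4M^2$ there, and $W(v)\leq W_M$ since $|v-a| = \eta|u-a|\leq M$. Consequently $J_{B_R(\xi)}(v) \leq C(M)R^{n-1}$, and minimality then delivers $J_{B_R(\xi)}(u) \leq J_{B_R(\xi)}(v) \leq C(M)R^{n-1}$.

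I do not anticipate any substantive obstacle in executing this plan: the entire argument rests on the pointwise gradient bound supplied outright by (\ref{grad-bound}). Were that bound not assumed, one would have to pass through a coarea/slicing step to select a ``good'' sphere $\partial B_{r_0}$, $r_0\in(R-1,R)$, on which $\int_{\partial B_{r_0}}|\nabla u|^2\,dS$ is controlled, and then extend the boundary trace inward with bounded extension norms; under (HB) this refinement is unnecessary.
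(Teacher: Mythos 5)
Your proof is correct and is exactly the standard argument: the paper omits the proof of this lemma (citing Lemma~1 of Caffarelli--Cordoba), but the competitor you build --- $v=a+\eta(|x-\xi|)(u-a)$ with a linear cut-off vanishing on $B_{R-1}(\xi)$ --- is precisely the construction the authors do write out for the cylindrical analogue (Lemma~\ref{2energy-upper-bound}), where the comparison map equals $e$ on $\mathcal{B}_{R-1}$ and interpolates linearly on the unit shell. No gaps: the pointwise bounds in (\ref{grad-bound}) and $W(a)=0$ give exactly the shell estimate you claim.
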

From (\ref{energy-upper-bound-1}) we obtain $A_{kT}\leq C(kT)^{n-1}$. This concludes the proof of Theorem A in the case $0<\alpha<2$ for $\lambda>0$ small. The restriction on the smallness of $\lambda$ is easily removed via (\ref{energy-upper-bound-1}).
%(see also the proof of Theorem \ref{teo-2} below).
\vskip.3cm
\noindent 4.\underline{The case $\alpha=2$.}

 We let $\varphi:B_R\rightarrow\R$ the solution of the problem
 \begin{eqnarray}\label{phi}
 \left\{\begin{array}{l}\Delta\varphi=c_1\varphi,\;\text{ on }\;B_R,\\
 \varphi=1,\;\text{ on }\;\partial B_R,\end{array}\right.
 \end{eqnarray}
 where $c_1<c_0$ will be chosen later and $c_0$ is the constant in ({HA}). It is well known that $\varphi$ satisfies the exponential estimate
 \begin{equation}\label{exp-phi}
 \varphi(R-r)\leq e^{-c_2(R-r)},\;\text{ for }\;r\in[0,R],\;R\geq 1,
 \end{equation}
 for some $c_2>0$.

 Define
 \begin{equation}\label{h-sigma-def}
 \begin{split}
 q^h=\varphi M,\hskip.8cm
 \end{split}
 \end{equation}
  and as before
  \begin{equation}\label{h-sigma-def-1}
 \begin{split}
 & q^\sigma=\min\{q^u,q^h\},\\
 & \beta=\min\{q^u-q^\sigma,\lambda\},
 \end{split}
 \end{equation}
 From (\ref{young}), $q^\sigma=q^u$ on $\partial B_R$, and an integration by parts we get
  \begin{equation}\label{sobolev-1}
  \begin{split}
 &(\int_{B_R}\beta^\frac{2n}{n-1})^\frac{n-1}{n}\\
 &\leq 2CA\int_{B_R}\Big(W(\sigma)-W(u)+\Delta q^\sigma(q^u- q^\sigma)\Big)dx +\frac{C}{A}\int_{B_R\cap\{q^u-q^\sigma<\lambda\}}(q^u-q^\sigma)^2 dx\\
 &=2CA\int_{B_R\cap\{q^u>q^h\}}\Big(W(h)-W(u)+\Delta q^h(q^u-q^h)\Big)dx +\frac{C}{A}\int_{B_R\cap\{0<q^u-q^h<\lambda\}}(q^u-q^h)^2 dx,
 \end{split}
 \end{equation}
 where we have used that $q^u>q^\sigma$ implies $q^\sigma=q^h$, $h=a+q^h\nu^u$.
By ({HA}) there is $\lambda^*>\lambda$ sufficiently small (and fixed from now on)  so that the maps $s\mapsto W(a+s\nu)$ and $s\mapsto W_u(a+s\nu)\cdot\nu$ are increasing in $[0,\lambda^*]$.
%\label{lemma-sob}
\vskip.3cm
\noindent\underline{{\it Claim }4}
 {\it \begin{equation}\label{sobolev-2}
  \begin{split}
 &(\int_{B_R}\beta^\frac{2n}{n-1}dx)^\frac{n-1}{n}\\
 &\leq 2CA\int_{B_R\cap\{q^u>q^h\}\cap\{q^u>\lambda^*\}}\Big(W(h)-W(u)+\Delta q^h(q^u-q^h)\Big)dx\\ &\hskip6cm+\frac{C}{A}\int_{B_R\cap\{0<q^u-q^h<\lambda\}\cap\{q^u>\lambda^*\}}(q^u-q^h)^2dx.
 \end{split}
 \end{equation}}

 \begin{proof}
In $B_R\cap\{q^u\leq\lambda^*\}$ we have
\begin{equation}\label{wu-wh}
\begin{split}
& W(u)-W(h)=\int_{q^h}^{q^u}W_u(a+s\nu)\cdot\nu ds\geq\int_{q^h}^{q^u}c_0s ds=\frac{1}{2}c_0(q^u+q^h)(q^u-q^h),\\
& \Delta q^h(q^u-q^h)=c_1q^h(q^u-q^h),
\end{split}
\end{equation}
where we have also utilized (\ref{phi}), (\ref{h-sigma-def}) and (\ref{h-sigma-def-1}). From (\ref{wu-wh}) it follows

\begin{equation}\label{integrand}
\begin{split}
& 2CA(W(h)-W(u)+\Delta q^h(q^u- q^h))+\frac{C}{A}(q^u-q^h)^2\\
&\leq \big(2CA(-\frac{1}{2}c_0(q^u+q^h)+c_1q^h)+\frac{C}{A}(q^u-q^h)\big)(q^u-q^h).
\end{split}
\end{equation}
For $c_1>0$ small and $A>0$ large ($c_1\leq\frac{1}{4}c_0$ and $A\geq\sqrt{\frac{2}{c_0}}$) the last expression in (\ref{integrand}) is negative. Therefore we also have
\begin{equation}\label{lambdastar}
\begin{split}
& 2CA\int_{B_R\cap\{q^u>q^h\}\cap\{q^u\leq\lambda^*\}}\Big(W(h)-W(u)+\Delta q^h(q^u- q^h)\Big)dx\\ &\hskip3cm+\frac{C}{A}\int_{B_R\cap\{0<q^u-q^h<\lambda\}\cap\{q^u\leq\lambda^*\}}(q^u-q^h)^2dx\leq 0.
\end{split}
\end{equation}
 This and (\ref{sobolev-1}) conclude the proof of Claim 4.
\end{proof}
 Set $R=(k+1)T$ where $T>0$ is a large number to be chosen later.
 Set
  \begin{equation}\label{omega}
 \omega_j=\mathcal{L}^n((B_{jT}\setminus B_{(j-1)T})\cap\{q^u>\lambda^*\}),\;j=1,\ldots,k+1.
 \end{equation}
 %\label{lemma-omega}
 \noindent\underline{{\it Claim }5}
{\it \begin{equation}\label{basic-inequality}
C_0(\sum_{j=1}^k\omega_j)^\frac{n-1}{n}\leq \sum_{j=1}^k e^{-c_2jT}\omega_{k+1-j}+\omega_{k+1},\;k=1,\ldots
\end{equation}
where $c_2$ is the constant in (\ref{exp-phi}) and $C_0>0$ is a constant, $C_0=C_0(A,\lambda,M)$.
%that depends on $\lambda$ and $A$, $C^\circ, c^*$.
}
 \begin{proof}
 On $B_{kT}$ we have $q^h\leq M e^{-c_2 T}$ and therefore we can choose $T>0$ so large that
 \begin{equation}\label{0-on-bk}
 \begin{split}
 & x\in B_{kT}\cap\{q^u>\lambda^*\}\;\Rightarrow\;q^u-q^h\geq\lambda^*-Me^{-c_2 T}>\lambda\\
 &\Rightarrow\;B_{kT}\cap\{q^u-q^h<\lambda\}\cap\{q^u>\lambda^*\}=\emptyset.
 \end{split}
 \end{equation}
 We begin by estimating part of the right hand side of (\ref{sobolev-2}) over $B_R\setminus B_{R-T}$ by utilizing (\ref{0-on-bk}) and (\ref{i2-estimate-1})
 \begin{equation}\label{on-bk1}
 \begin{split}
 & 2CA\int_{(B_{(k+1)T}\setminus B_{kT})\cap\{q^u>q^h\}\cap\{q^u>\lambda^*\}}\Big(W(h)-W(u)+\Delta q^h(q^u-q^h)\Big)dx\\ &\hskip3cm+\frac{C}{A}\int_{B_{(k+1)T}\cap\{0<q^u-q^h<\lambda\}\cap\{q^u>\lambda^*\}}(q^u-q^h)^2dx\\
 &\leq 2CA\int_{(B_{(k+1)T}\setminus B_{kT})\cap\{q^u>q^h\}\cap\{q^u>\lambda^*\}}\Big(W(h)+\Delta q^h(q^u-q^h))\Big)dx\\ &\hskip3cm+\frac{C}{A}\int_{(B_{(k+1)T}\setminus B_{kT})\cap\{0<q^u-q^h<\lambda\}\cap\{q^u>\lambda^*\}}(q^u-q^h)^2dx\\
 &\leq \big(2CA(\overline{W}+c_1M^2)+\frac{C}{A}\lambda^2\big)\mathcal{L}^n((B_{(k+1)T}\setminus B_{kT})\cap\{q^u>\lambda^*\})\\
 &=C^*\omega_{k+1}
 \end{split}
 \end{equation}
 where we have set $\overline{W}=\max_{\vert u-a\vert\leq M} W(u)$ and $C^*=2CA(\overline{W}+c_1M^2)+\frac{C}{A}\lambda^2$.

 Next we estimate the remaining part of (\ref{sobolev-2}) over $B_{R-T}$. The smoothness of $W$ implies that there are $C_0>0$ and  $\bar{q}>0$ such that
\begin{equation}\label{w-upper-bound}
W(a+s\nu)\leq\frac{1}{2}C_0s^2,\;\text{ for }\;s\in[0,\bar{q}].
\end{equation}
We can assume $T>0$ so large that $M e^{-c_2T}\leq\bar{q}$. Then we have
\begin{equation}\label{j-bound}
\begin{split}
& x\in((B_{(k+1-j)T}\setminus B_{(k-j)T})\cap\{q^u>\lambda^*\}\cap\{q^u>q^h\}\\
& \Rightarrow\; W(h)+\Delta q^h(q^u-q^h)\leq M^2 e^{-c_2jT}(\frac{1}{2}C_0e^{-c_2jT}+c_1)\\
& \Rightarrow\;2CA\int_{(B_{(k+1-j)T}\setminus B_{(k-j)T})\cap\{q^u>\lambda^*\}\cap\{q^u>q^h\}}(W(h)+\Delta q^h(q^u-q^h))dx\\
& \leq 2CAM^2(\frac{1}{2}C_0+c_1) e^{-c_2jT}\omega_{k+1-j)}=C^\circ\omega_{k+1-j}\epsilon^j
\end{split}
\end{equation}
where we have set $C^\circ=2CAM^2(\frac{1}{2}C_0+c_1)$ and $\epsilon=e^{-c_2T}$.
From (\ref{j-bound}) we obtain
\begin{equation}\label{sum-j}
2CA\int_{B_{kT}\cap\{q^u>\lambda^*\}\cap\{q^u>q^h\}}(W(h)-W(u)+\Delta q^h(q^u-q^h)) dx\leq C^\circ\sum_{j=1}^k\epsilon^j\omega_{k+1-j}.
\end{equation}
Combining (\ref{sum-j}), (\ref{on-bk1}) in (\ref{sobolev-2}) we obtain the upper bound
\begin{equation}\label{sobolev-3}
(\int_{B_{(k+1)T}}\beta^\frac{2n}{n-1}dx)^\frac{n-1}{n}\leq C^\circ\sum_{j=1}^k\epsilon^j\omega_{k+1-j}+C^*\omega_{k+1}
\end{equation}
To estimate the left hand side of (\ref{sobolev-3}) from below we observe that (\ref{0-on-bk}) implies
\begin{equation}\label{first-rhs-est}
\begin{split}
&(\int_{B_{kT}\cap\{q^u-q^h<\lambda\}\cap\{q^u>\lambda^*\}}(q^u-q^h)^\frac{2n}{n-1}dx
+\int_{B_{kT}\cap\{q^u-q^h\geq\lambda\}\cap\{q^u>\lambda^*\}}\lambda^\frac{2n}{n-1}dx)^\frac{n-1}{n}\\
&=
(\int_{B_{kT}\cap\{q^u>\lambda^*\}}\lambda^\frac{2n}{n-1}dx)^\frac{n-1}{n}\\
&=\lambda^2(\sum_{j=1}^k\omega_j)^\frac{n-1}{n}\\
&=(\int_{B_{kT}\cap\{q^u>\lambda^*\}}(\beta^2)^\frac{n}{n-1}dx)^\frac{n-1}{n}\\
&\leq(\int_{B_{(k+1)T}}\beta^\frac{2n}{n-1}dx)^\frac{n-1}{n}.
\end{split}
\end{equation}
Combining this with (\ref{sobolev-3}) we obtain (\ref{basic-inequality}). The proof of Claim 5 is complete

\end{proof}
%\label{conclusion}
 \noindent\underline{{\it Claim }6}

{\it From
{\rm (\ref{basic-inequality})} it follows
\begin{equation}\label{claim}
\omega_k\geq c^*k^{n-1},\;\text{ for }\;k=1,2,\ldots.
\end{equation}
for some $c^*>0$. Then {\rm(\ref{claim})} implies
\[\mathcal{L}^n(B_R\cap\{q^u>\lambda^*\})\geq\frac{c^*}{n2^nT^n}R^n,\;\text{ for }\;R\geq T.\]
 }
 \begin{proof}
 We proceed by induction. For $k=1$ (\ref{claim}) holds by (\ref{mu0-cond}) for any $0<c^*\leq\mu_0$, $T\geq 1$. Thus we assume that (\ref{claim}) holds true for $j\leq k$ and show that it is true for $k+1$. From the inductive assumption
we have
\begin{equation}\label{left-inductive}
\frac{c^*}{n}k^n=c^*\int_0^kj^{n-1}dj\leq c^*\sum_{j=1}^kj^{n-1}\leq\sum_{j=1}^k\omega_j.
\end{equation}
Therefore for the left hand side of (\ref{basic-inequality}) we have the lower bound
\begin{equation}\label{left-inductive-1}
\frac{C_0}{2^n}(\frac{c^*}{n})^\frac{n-1}{n}(k+1)^{n-1}\leq C_0(\frac{c^*}{n})^\frac{n-1}{n}k^{n-1}.
\end{equation}
Observe now that we have the obvious bound
\begin{equation}\label{annulus-measure}
\omega_j\leq\eta j^{n-1}T^n,
\end{equation}
where $\eta$ is the measure of the unit sphere in $\R^n$.
Therefore we can derive for the right hand side of (\ref{basic-inequality}) the upper bound
\begin{equation}\label{right-inductive}
\sum_{j=1}^k\epsilon^j\omega_{k+1-j}+\omega_{k+1}\leq\eta T^n k^{n-1}\sum_{j=1}^k\epsilon^j+\omega_{k+1}
\leq\eta T^n k^{n-1}\frac{\epsilon}{1-\epsilon}+\omega_{k+1}
\end{equation}
From this and (\ref{left-inductive-1}) we get
\begin{equation}\label{leftright-inductive}
\frac{C_0}{2^n}(\frac{c^*}{n})^\frac{n-1}{n}(k+1)^{n-1}\leq\eta T^n\frac{\epsilon}{1-\epsilon}k^{n-1}+\omega_{k+1}.
\end{equation}
Since $\epsilon=e^{-c_2T}$ we can choose $T>0$ so large that
\[\eta T^n\frac{\epsilon}{1-\epsilon}\leq\frac{C_0}{2^{n+1}}(\frac{c^*}{n})^\frac{n-1}{n}.\]
Then from (\ref{leftright-inductive}) we obtain
\begin{equation}\label{near-the-end}
\frac{C_0}{2^{n+1}}(\frac{c^*}{n})^\frac{n-1}{n}(k+1)^{n-1}\leq\omega_{k+1}.
\end{equation}
Therefore to complete the induction it suffices to observe that we can choose $c^*$ so small that
\[c^*\leq\frac{C_0}{2^{n+1}}(\frac{c^*}{n})^\frac{n-1}{n}\;\Leftrightarrow\;
1\leq\frac{C_0}{2^{n+1}n^\frac{n-1}{n}(c^*)^\frac{1}{n}}.\]
Let $[R/T]$ the integer part of $R/T$ and observe that
\[\frac{[R/T]}{R/T}\geq\frac{1}{2},\;\text{ for }\;R\geq T.\]
From (\ref{claim}) and (\ref{left-inductive}) we have
\[\mathcal{L}^n(B_R\cap\{q^u>\lambda^*\})\geq\sum_{k=1}^{[R/T]}\omega_k\geq\frac{c^*}{nT^n}(\frac{[R/T]}{R/T})^nR^n\geq
\frac{c^*}{n2^nT^n}R^n,\;\text{ for }\;R\geq T.\]
\end{proof}
\noindent  Claim 6 concludes the case $\alpha=2$ and completes the proof of Theorem A for small $\lambda>0$. As in the case $\alpha<2$ the restriction on the smallness of $\lambda$ is removed via (\ref{energy-upper-bound-1}).

\section{Pointwise Estimates-Liouville type results}
%In this section we consider together with potentials defined in ({HA}) also potentials $W:\R^m\rightarrow\R$ that satisfy
%\begin{description}
%\item[({HA})*] $W:\R^m\rightarrow\R$ is nonnegative and $W(a)=0$ for some $a\in\R^m$.
%Moreover $W\in C^2(\R^m;\R)$ and there is $\rho_0>0$ such that the map $\rho\rightarrow W(a+\rho\nu)$ has a strictly positive second derivative for $\rho\in(0,\rho_0]$ and $\vert\nu\vert=1$.
%\end{description}
%We remark that ({HA})* is a weak nondegeneracy condition that allows $C^\infty$ contact at $a$ of $W$ with zero.

 Theorem A implies the following basic estimate (cfr. Theorem 1.2 in Fusco \cite{fu})
\begin{theorem}\label{teo-f}
Assume that $W$ satisfies {\rm({HA})}  and assume that $u:D\to\R^m$ is minimal in the sense of {\rm({HB})}, $D\subset\R^n$ open. Let $\mathcal{Z}:=\{ W=0\}\setminus\{a\}$ and assume
\begin{equation}\label{mathzeta}
\mathcal{Z}=\emptyset\;\;\text{ or }\;\;d_0=\inf_{x\in D}d(u(x),\mathcal{Z})>0,\;d\;\text{ the Euclidean distance }.
\end{equation}
Then, given $\lambda>0$, there is
$R(\lambda)$ such that
\begin{eqnarray}\label{condition}
B_{R(\lambda)}(x_0)\subset D,\;\;\Rightarrow\;\;\vert u(x_0)-a\vert< \lambda.
\end{eqnarray}
 $R(\lambda)$ depends only on $W$  and on the bound $M$ in {\rm({HB})} if $\mathcal{Z}=\emptyset$ and also on $d_0$ otherwise.
\end{theorem}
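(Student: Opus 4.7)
The plan is to argue by contrapositive: suppose $\vert u(x_0)-a\vert\geq\lambda$ at some $x_0$ with $B_R(x_0)\subset D$, and produce an upper bound on $R$. Because the conclusion $\vert u(x_0)-a\vert<\lambda$ is monotone in $\lambda$, it suffices to handle small $\lambda$, specifically $\lambda<2\,\text{dist}(a,\mathcal{Z})$ (nondegeneracy in (HA) forces $a$ to be isolated in $\{W=0\}$, so this distance is positive, and is $+\infty$ when $\mathcal{Z}=\emptyset$). Then $\lambda/2$ meets the hypothesis on $\lambda$ in Theorem A.

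First I would invoke the gradient bound in (HB). From $\vert u(x_0)-a\vert\geq\lambda$ and $\vert\nabla u\vert\leq M$, we get $\vert u(x)-a\vert\geq\lambda/2$ throughout the ball $B_r(x_0)$ with $r=\min\{\lambda/(2M),1\}$. Hence
\[
\mathcal{L}^n\bigl(B_1(x_0)\cap\{\vert u-a\vert>\lambda/2\}\bigr)\geq\mathcal{L}^n(B_r)=:\mu_0,
\]
a quantity depending only on $\lambda$ and $M$. Theorem A then upgrades this to
\[
\mathcal{L}^n\bigl(B_R(x_0)\cap\{\vert u-a\vert>\lambda/2\}\bigr)\geq C R^n,\quad R\geq 1,\;B_R(x_0)\subset D,
\]
with $C=C(\mu_0,\lambda,M)$.

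The next step is to convert this volume lower bound into a lower bound for $\int_{B_R} W(u)\,dx$. Under hypothesis \eqref{mathzeta}, on the set $\{\vert u-a\vert>\lambda/2\}$ the value $u(x)$ lies in the compact set
\[
K:=\{v\in\R^m:\lambda/2\leq\vert v-a\vert\leq M,\;d(v,\mathcal{Z})\geq d_0\}
\]
(the last constraint dropped when $\mathcal{Z}=\emptyset$), which is disjoint from $\{W=0\}$. By continuity, $W\geq\gamma>0$ on $K$ with $\gamma=\gamma(W,\lambda,M,d_0)$, so
\[
\int_{B_R(x_0)} W(u)\,dx\geq\gamma\,C\,R^n.
\]

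Finally I would contrast this with Lemma \ref{energy-upper-bound}, which gives $\int_{B_R(x_0)}W(u)\,dx\leq C'R^{n-1}$. The two estimates are incompatible once $R$ exceeds a threshold $R(\lambda)$ depending only on $W$, $M$, and (when $\mathcal{Z}\neq\emptyset$) $d_0$. Hence $B_{R(\lambda)}(x_0)\subset D$ forces $\vert u(x_0)-a\vert<\lambda$. I do not expect a serious obstacle: the only slightly non-routine items are reducing to small $\lambda$ so that Theorem A is legally applicable, and invoking compactness via \eqref{mathzeta} to obtain a uniform positive lower bound for $W$ on $K$.
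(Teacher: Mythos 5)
Your proposal is correct and follows essentially the same route as the paper's proof: use the gradient bound in ({HB}) to get a definite measure $\mu_0$ of $\{\vert u-a\vert\geq\lambda/2\}$ in $B_1(x_0)$, apply Theorem A to get the $CR^n$ density estimate, bound $W$ from below by its minimum $\bar w>0$ on the compact set $\{\lambda/2\leq\vert z-a\vert\leq M,\ d(z,\mathcal{Z})\geq d_0\}$, and contrast the resulting $\bar w C R^n$ lower bound for the energy with the $CR^{n-1}$ upper bound of Lemma \ref{energy-upper-bound}. Your explicit reduction to small $\lambda$ (so that Theorem A's restriction $\lambda<\mathrm{dist}(a,\{W=0\}\setminus\{a\})$ is met) is a point of care the paper leaves implicit, but it does not change the argument.
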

\begin{proof}
Let $R_{x_0}=\max\{R:B_R(x_0)\subset D\}$ and assume $R(x_0)>1$. Then,
from (\ref{grad-bound}), we have that the inequality
\[\vert u(x_0)-a\vert\geq\lambda\]
 implies
\[\mathcal{L}^n(B_1(x_0)\cap\{\vert u(x)-a\vert\geq\frac{\lambda}{2}\}\geq\mu_0>0\]
and therefore Theorem A yields
\begin{equation}
\label{inequality-}\mathcal{L}^n(B_R(x_0)\cap\{\vert u(x)-a\vert\geq\frac{\lambda}{2}\}\geq \tilde{C} R^n,\;\text{ for }\;1<R<R_{x_0}
\end{equation}
 and a constant $\tilde{C}=\tilde{C}(\lambda,M)>0$ independent of $x_0$.
Observe that the assumption (\ref{mathzeta})
implies via (\ref{inequality-})
\begin{equation}\label{wbar-below}\bar{w}\tilde{C} R^n\leq\int_{B_R(x_0)}W(u)dx\leq J_{B_R(x_0)},\;\text{ for }\;R\leq R_{x_0}\end{equation}
where we have set
\[\bar{w}=\min\{W(z): \vert z-a\vert\geq\frac{\lambda}{2},\;d(z,\mathcal{Z})\geq d_0,\;\vert z-a\vert\leq M\}>0.\]
The inequality (\ref{wbar-below}) and the upper bound (\ref{energy-upper-bound-1}) in Lemma \ref{energy-upper-bound} are compatible only if $R\leq\frac{C}{\bar{w}\tilde{C}}$ where $C$ is the constant in Lemma \ref{energy-upper-bound}. Therefore if
\[R_{x_0}\geq 2\frac{C}{\bar{w}\tilde{C}}\]
we necessarily have
\[\vert u(x_0)-a\vert<\lambda.\]
This concludes the proof with $R(\lambda)=2\frac{C}{\bar{w}\tilde{C}}$.
\end{proof}

Theorem \ref{teo-f} allows to extend to potentials that satisfy ({HA}) and in particular to singular potentials ($\alpha\in(0,1]$) the following  {\it Liouville} type result established in \cite{fu}.
\begin{theorem}\label{rigidity}
Let $W$ and $u$ be as in Theorem \ref{teo-f} and assume $D=\R^n$. Then
\[u\equiv a.\]
\end{theorem}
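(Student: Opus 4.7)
The plan is to observe that Theorem \ref{rigidity} follows directly from Theorem \ref{teo-f} once we exploit the fact that $D = \R^n$ has no boundary. Fix an arbitrary point $x_0 \in \R^n$ and an arbitrary $\lambda > 0$. Since $D = \R^n$, the inclusion $B_R(x_0) \subset D$ holds trivially for every $R > 0$, and in particular for $R = R(\lambda)$ as provided by Theorem \ref{teo-f}. Applying that theorem therefore yields $\vert u(x_0) - a\vert < \lambda$.

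Since both $x_0 \in \R^n$ and $\lambda > 0$ are arbitrary, letting $\lambda \to 0^+$ gives $u(x_0) = a$ for every $x_0 \in \R^n$, that is, $u \equiv a$.

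There is no real obstacle at this stage: all the analytic content has been absorbed into Theorem \ref{teo-f}, which itself combines the density estimate of Theorem A with the linear-in-$R^{n-1}$ upper bound of Lemma \ref{energy-upper-bound}. Hypothesis (\ref{mathzeta}) is precisely what allows a uniform lower bound $W(u) \geq \bar w > 0$ on the ``bad'' set $\{\vert u - a\vert \geq \lambda/2\}$, so that the volumetric growth $\tilde{C} R^n$ delivered by the density estimate collides with the perimeter-type bound $CR^{n-1}$ on the energy unless that bad set is empty at every $x_0$. On a domain without boundary this incompatibility propagates to every point and every scale, which is exactly what promotes the pointwise approach-to-$a$ of Theorem \ref{teo-f} into the full rigidity statement $u \equiv a$.
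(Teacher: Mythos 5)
Your proof is correct and is essentially identical to the paper's own argument: since $D=\R^n$, the ball $B_{R(\lambda)}(x_0)$ lies in $D$ for every $x_0$ and $\lambda$, so Theorem \ref{teo-f} gives $\vert u(x_0)-a\vert<\lambda$ for all $\lambda>0$, hence $u\equiv a$. Nothing further is needed.
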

\begin{proof} $D=\R^n$ trivially implies that, given $x_0\in\R^n$ and $\lambda>0$, $B_{R(\lambda)}(x_0)\subset D$. Then Theorem \ref{teo-f} yields
\[\vert u(x_0)-a\vert<\lambda,\;\text{ for }\;\lambda>0\;x_0\in\R^n.\]
The proof is complete.
\end{proof}
The following exponential estimate ( see \cite{fu} Theorem 1.3)  can be considered a consequence of the density estimate in Theorem A.
\begin{theorem}\label{1exp}
Let $u:D\rightarrow\R^m$ and $W$ as in Theorem \ref{teo-f}. Assume $\alpha=2$ in {\rm(HA)} and $D\neq\R^n$ with $\sup_{x_0\in D}R_{x_0}=+\infty$. Then
\[\vert u(x)-a\vert\leq K e^{-k d(x,\partial D)},\;\text{ for some }\;k,K>0.\]
\end{theorem}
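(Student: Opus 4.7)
The idea is to combine the qualitative estimate from Theorem \ref{teo-f} with a classical linear comparison that exploits the quadratic behavior of $W$ at $a$ provided by {\rm(HA)} with $\alpha = 2$. Once $u$ is pulled into a neighborhood of $a$ where $W$ is nondegenerate and $C^2$, the function $v = |u-a|^2$ is a subsolution of a linear elliptic equation with a strictly positive zero-order term, and exponential decay is standard.

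The first step is to extract a differential inequality. Since $\alpha = 2$ gives $W \in C^2$ near $a$, and $W_u(a) = 0$ with $W_{uu}(a)\nu\cdot\nu \geq C_0 > 0$, a Taylor expansion yields constants $c_0, \lambda_0 > 0$ such that $(u-a)\cdot W_u(u) \geq c_0\,|u-a|^2$ whenever $|u-a| \leq \lambda_0$. Using $\Delta u = W_u(u)$ together with elliptic regularity (so $u \in C^{2,\alpha}_{\rm loc}$ and $v \in C^2$), one computes
\[
\Delta v \;=\; 2(u-a)\cdot W_u(u) + 2|\nabla u|^2 \;\geq\; 2 c_0 \,v \qquad \text{on } \Omega_0 := \{|u-a| < \lambda_0\}.
\]
Note there is no trouble at points where $u=a$, since $v$ is globally $C^2$.

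Next, Theorem \ref{teo-f} supplies a radius $R_0 = R(\lambda_0) > 0$, depending only on $W$ and $M$ (and $d_0$ if $\mathcal{Z}\neq\emptyset$), such that $d(y,\partial D) \geq R_0$ forces $|u(y) - a| \leq \lambda_0$; the assumption $\sup_{x_0\in D} R_{x_0} = +\infty$ ensures that such $y$ exist. Fix $x \in D$ with $\delta := d(x,\partial D) > R_0$. Every $y$ in the closed ball $\overline{B_{\delta - R_0}(x)}$ satisfies $d(y,\partial D) \geq R_0$, so that ball lies in $\Omega_0$ and carries $v \leq \lambda_0^2$. Compare $v$ with the explicit radial supersolution
\[
\phi(y) \;=\; \lambda_0^2 \,\frac{\cosh\!\bigl(\sqrt{2 c_0}\,|y-x|\bigr)}{\cosh\!\bigl(\sqrt{2 c_0}\,(\delta - R_0)\bigr)},
\]
which is $C^2$ on $B_{\delta - R_0}(x)$, satisfies $\Delta \phi \geq 2 c_0 \phi$ (the extra $\tfrac{n-1}{r}\phi'$ term being nonnegative), and equals $\lambda_0^2 \geq v$ on $\partial B_{\delta - R_0}(x)$. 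The maximum principle for the operator $\Delta - 2 c_0\,\mathrm{Id}$ (whose zero-order coefficient is negative) applied to $v - \phi$ gives $v \leq \phi$ throughout the ball, whence at the center
\[
|u(x) - a|^2 \;\leq\; \frac{\lambda_0^2}{\cosh\!\bigl(\sqrt{2 c_0}\,(\delta - R_0)\bigr)} \;\leq\; 2\lambda_0^2\, e^{-\sqrt{2 c_0}\,(\delta - R_0)}.
\]
Taking square roots and absorbing the factor $e^{\sqrt{c_0/2}\,R_0}$ produces $|u(x)-a| \leq K e^{-k\,d(x,\partial D)}$ with $k = \sqrt{c_0/2}$ and $K = \sqrt{2}\,\lambda_0\,e^{k R_0}$.

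Points with $d(x,\partial D) \leq R_0$ are handled trivially from the uniform bound $|u - a| \leq M$ of {\rm(HB)}, after possibly enlarging $K$ by a factor $e^{k R_0}$. There is no substantive obstacle: the argument is a standard interior-decay-by-comparison, and the only care needed is in verifying that the Taylor expansion yields the quadratic lower bound on $(u-a)\cdot W_u(u)$ uniformly in the direction $\nu$ and in arranging that the ball $B_{\delta - R_0}(x)$ sits entirely inside the good set $\Omega_0$ where the linear inequality is available.
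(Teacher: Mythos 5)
Your overall strategy is exactly the paper's: Theorem \ref{teo-f} forces $|u-a|\leq\lambda_0$ once $d(x,\partial D)\geq R(\lambda_0)$, and then ``linear theory'' --- which the paper's proof leaves as a black box and you rightly try to make explicit --- gives exponential decay of $v=|u-a|^2$ from the differential inequality $\Delta v\geq 2c_0 v$. The reduction, the Taylor-expansion lower bound $(u-a)\cdot W_u(u)\geq c_0|u-a|^2$ near $a$, and the trivial treatment of points with $d(x,\partial D)\leq R_0$ are all fine.

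However, the comparison step as written fails. To conclude $v\leq\phi$ from the maximum principle for $\Delta-2c_0\,\mathrm{Id}$ you need $w=v-\phi$ to be a subsolution, i.e.\ $\Delta w\geq 2c_0w$; since $\Delta v\geq 2c_0v$, this requires $\phi$ to be a \emph{supersolution}, $\Delta\phi\leq 2c_0\phi$. Your radial barrier $\phi=\lambda_0^2\cosh\bigl(\sqrt{2c_0}\,|y-x|\bigr)/\cosh\bigl(\sqrt{2c_0}\,(\delta-R_0)\bigr)$ satisfies $\Delta\phi=2c_0\phi+\frac{n-1}{r}\phi'\geq 2c_0\phi$ for $n\geq2$: the nonnegative first-order term you dismiss as harmless is precisely what pushes the inequality the wrong way, so $\Delta w-2c_0w$ has no definite sign and the maximum principle does not apply. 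The fix is routine: either shrink the rate to $k=\sqrt{2c_0/n}$, since $\sinh(kr)/r\leq k\cosh(kr)$ gives $\Delta\phi\leq nk^2\phi\leq 2c_0\phi$; or reuse the paper's own barrier from (\ref{phi}), the exact solution of $\Delta\varphi=c_1\varphi$ on the ball with $\varphi=1$ on the boundary and $c_1\leq 2c_0$, together with the exponential bound (\ref{exp-phi}), so that $v-\lambda_0^2\varphi$ is a genuine subsolution. With either correction your argument closes, at the cost only of a smaller (still positive) decay constant $k$, which is all the theorem requires.
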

\begin{proof}
First we note that it is sufficient to establish that, given a small number $\lambda>0$, there is $d_\lambda>0$ such that
\[d(x,\partial D)\geq d_\lambda\;\;\Rightarrow\;\;\vert u(x)-a\vert\leq\lambda\]
since then linear theory renders the result. From Theorem \ref{teo-f} it follows that we can take $d_\lambda=R(\lambda)$.
  The proof is complete.
\end{proof}

\section{On the Linking with the Minimal Surface Problem}
We will consider partitions with Dirichlet conditions for simplicity. The volume constraint case is more involved but similar. Assume that $W$ is as in Theorem \ref{main-1} and that therefore
\[0=W(a_1)=W(a_2)<W(u),\;u\not\in\{a_1,a_2\}\]
for $a_1\neq a_2\in\R^m$.
Let $\{u_{\epsilon_k}\}$ be a sequence of global minimizers of $J_D^\epsilon(u)=\int_D(\frac{\epsilon^2}{2}\vert\nabla u\vert^2+W(u))dy$ subject to the Dirichlet condition $u_{\epsilon_k}=g$ on $\partial D$, $g:\partial D\rightarrow\{a_1,a_2\}$.

We assume $D\subset\R^n$ open bounded with $C^1$ boundary and consider a partition of the boundary $B_j=g^{-1}(\{a_j\}),\;j=1,2$ with $\mathcal{H}^{n-1}(\partial D\setminus(B_1\cup B_2))=0$. We also assume that $\|u_{\epsilon_k}\|_{L^\infty(D;\R^m)}<M$ uniformly. Then by the methods in Baldo \cite{b} $\{u_{\epsilon_k}\}$ is relatively compact in $L^1(D;\R^m)$ and along a subsequence $\epsilon_k\rightarrow 0^+$ $u_{\epsilon_k}\stackrel{L^1}{\rightarrow}u_0=a_1\chi_{D_1}+a_2\chi_{D_2}$ where $D_1,D_2$ is a partition of $D$ with $\partial D_j\cap\partial D=B_j,\;j=1,2$. Moreover the interface $\partial D_1\cap\partial D_2$ minimizes $\mathcal{H}^{n-1}(\partial A_1\cap\partial A_2)$ among all partitions of $D$ with Dirichlet conditions $B$. For two-phase partitions, if $n\leq 7$, the interface $\partial D_1\cap\partial D_2$ is locally a real analytic classical minimal surface (see \cite{gmt1}).

We write $u_\epsilon$ in polar form (cfr. (\ref{polar-form})), $u_\epsilon=a_1+\rho_\epsilon\nu_\epsilon$ with
\[\rho_\epsilon(y)=\vert u_\epsilon-a_1\vert,\;\;\nu_\epsilon(y)=\frac{u_\epsilon-a_1}{\vert u_\epsilon-a_1\vert},\;\nu_\epsilon(y)=0\;\text{ if }\;\rho_\epsilon(y)=0.\]
Then, from $u_{\epsilon_k}\stackrel{L^1}{\rightarrow}u_0$, we obtain that
\begin{eqnarray}\label{ro-limit}
\rho_{\epsilon_k}\rightarrow\rho_0=\left\{\begin{array}{l}
0\;\text{ in }\;D_1,\\
\vert a_1-a_2\vert\;\text{ in }\;D_2.
\end{array}\right.
\end{eqnarray}
\begin{proposition}\label{uni-conv}
The level set $\mathcal{S}_\epsilon=\{y\in D:\vert u_\epsilon-a_j\vert=\gamma,\;j=1,2\}$, $\gamma\in(0,\vert a_1-a_2\vert)$ converges locally uniformly to $\partial D_1\cap\partial D_2$ as $\epsilon\rightarrow 0^+$.
\end{proposition}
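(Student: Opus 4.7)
The plan is to establish the two set-inclusions that together constitute locally uniform convergence: for every compact $K\subset D$ and every $\delta>0$, once $\epsilon$ is small enough, (a) $\mathcal{S}_\epsilon\cap K$ lies in the $\delta$-neighborhood of $\partial D_1\cap\partial D_2$, and (b) every point of $(\partial D_1\cap\partial D_2)\cap K$ is within distance $\delta$ of $\mathcal{S}_\epsilon$. Item (a) uses the two-minima density estimate of Theorem \ref{main-1}, while (b) is an intermediate value argument leveraging the $L^1$ convergence $u_{\epsilon_k}\to u_0$ and the continuity of the approximating maps.

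For (a), I argue by contradiction: suppose, along a subsequence, $y_k\in\mathcal{S}_{\epsilon_k}\cap K$ satisfies $d(y_k,(\partial D_1\cap\partial D_2)\cup\partial D)\geq\delta$. Then $B_\delta(y_k)\subset D_j$ for a fixed $j$; say $j=1$, so $u_0\equiv a_1$ on $B_\delta(y_k)$ and the $L^1$ convergence forces
\[
\mathcal{L}^n\bigl(B_\delta(y_k)\cap\{|u_{\epsilon_k}-a_1|>\gamma/2\}\bigr)\longrightarrow 0.
\]
Now rescale: $v_k(x)=u_{\epsilon_k}(y_k+\epsilon_k x)$ minimizes the unscaled $J$ on $B_{\delta/\epsilon_k}(0)$, solves $\Delta v_k=W_u(v_k)$, satisfies $\|v_k\|_{L^\infty}\leq M$, and has $|v_k(0)-a_1|=\gamma$. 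Standard interior estimates for the system give a $k$-uniform Lipschitz bound on $v_k$ near the origin, so for some $r_0=r_0(\gamma,M,W)>0$ we have $|v_k-a_1|\geq\gamma/2$ on $B_{\min\{r_0,1\}}(0)$, producing $\mathcal{L}^n(B_1(0)\cap\{|v_k-a_1|>\gamma/2\})\geq\mu_0>0$ with $\lambda=\gamma/2$. Theorem \ref{main-1} applies because $B_{\delta/\epsilon_k}(0)$ is contained in the rescaled domain (thanks to $d(y_k,\partial D)\geq\delta$), and scaling back yields
\[
\mathcal{L}^n\bigl(B_\delta(y_k)\cap\{|u_{\epsilon_k}-a_1|>\gamma/2\}\bigr)=\epsilon_k^n\,\mathcal{L}^n\bigl(B_{\delta/\epsilon_k}(0)\cap\{|v_k-a_1|>\gamma/2\}\bigr)\geq C\delta^n
\]
uniformly in $k$, contradicting the previous convergence to zero.

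For (b), fix $y_0\in(\partial D_1\cap\partial D_2)\cap K$ and $\delta>0$ with $\overline{B_\delta(y_0)}\subset D$. Since $y_0$ lies on both $\partial D_1$ and $\partial D_2$, the sets $B_\delta(y_0)\cap D_j$, $j=1,2$, have positive Lebesgue measure. Set $\eta=\tfrac{1}{2}(|a_1-a_2|-\gamma)>0$; the $L^1$ convergence shows that for $k$ large both $B_\delta(y_0)\cap\{|u_{\epsilon_k}-a_j|<\eta\}$ are nonempty. These two sets are disjoint, and on the second one $|u_{\epsilon_k}-a_1|\geq|a_1-a_2|-\eta>\gamma$. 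Continuity of $u_{\epsilon_k}$ and connectedness of $B_\delta(y_0)$ then force $y\mapsto|u_{\epsilon_k}(y)-a_1|$ to take the value $\gamma$ somewhere in $B_\delta(y_0)$, giving $\mathcal{S}_{\epsilon_k}\cap B_\delta(y_0)\neq\emptyset$. The main technical hurdle lies in the rescaling step of (a): one must secure both the $k$-uniform interior Lipschitz bound on $v_k$ coming from elliptic regularity and the containment of $B_{\delta/\epsilon_k}(0)$ in the rescaled domain so that Theorem \ref{main-1} applies verbatim; once these are in place, the rest of the argument follows the scalar pattern of \cite{cc}.
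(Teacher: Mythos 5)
Your part (a) is essentially the paper's proof — the same blow-up $v_k(x)=u_{\epsilon_k}(y_k+\epsilon_k x)$, the same use of the uniform gradient bound to seed the unit-scale density hypothesis, and the same contradiction with $\rho_{\epsilon_k}\to\rho_0=0$ in $L^1$ on $B_\delta(y_k)\subset D_1$ — except that the superlevel-set bound you invoke is Theorem A itself (applied with $a=a_1$ and $\lambda=\gamma/2<|a_1-a_2|$) rather than Theorem \ref{main-1}, which concerns sublevel sets. Part (b) goes beyond what the paper proves (its argument only establishes that $\mathcal{S}_\epsilon\cap\mathcal{K}$ lies near $\partial D_1\cap\partial D_2$, not the reverse inclusion), and your intermediate-value argument is correct once you take $\eta=\frac{1}{2}\min\{\gamma,\vert a_1-a_2\vert-\gamma\}$ so that points of $\{\vert u_{\epsilon_k}-a_1\vert<\eta\}$ actually satisfy $\vert u_{\epsilon_k}-a_1\vert<\gamma$.
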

\begin{proof}
(Blow-up, cfr. Theorem 2 in \cite{cc}) Suppose that the convergence is not uniform over a compact set $\mathcal{K}\subset\subset D$. Then there are sequences $\epsilon_k\rightarrow 0^+$, $y_k\in\mathcal{S}_{\epsilon_k}\cap\mathcal{K}$, $k=1,\ldots$ and $r>0$ such that  $d(y_k,\partial D_1\cap\partial D_2)\geq r$. We can assume that all the points $y_k$ are in one of the sets $D_j,\;j=1,2$. For definiteness we suppose $y_k\in D_1,\;k=1,\ldots$. Actually $\mathcal{K}\subset\subset D$ implies that we can assume $B_r(y_k)\subset D_1,\;k=1,\ldots$. Set
\[x=\frac{y-y_k}{\epsilon_k},\;\;\;\;
v_k=u_{\epsilon_k}(\epsilon_kx+y_k),\;\;\;\;\varrho_k(x)=\rho_{\epsilon_k}(\epsilon_kx+y_k).\]
Since $u_{\epsilon_k}$ is a  minimizer we have $\Delta v_k-W(v_k)=0$, $\varrho_k(0)=\gamma$ and $\vert v_k-a\vert<M$.  Thus we also have the gradient bound $\vert\nabla v_k\vert<M$ which implies
\[\varrho_k(x)>\frac{\gamma}{2}\;\text{ for }\;\vert x\vert<\delta\]
for some $\delta>0$ independent of $k=1,\ldots$.
Now we observe that $v_k,\;k=1,\ldots$ is a  minimizer of $J_{D_k}(v)=\int_{D_k}(\frac{1}{2}\vert\nabla v\vert^2+W(v))dx$, $D_k=\{x=y-y_k/\epsilon_k,y\in D\}$. Thus we can apply Theorem A that yields the density estimate
\begin{equation}\label{dens-est-x}
\mathcal{L}^n(\{\vert x\vert<R\}\cap\{\varrho_k(x)>\frac{\gamma}{2}\})\geq C R^n,\;R\geq\delta
\end{equation}
that holds uniformly over the family $\{v_k\}$. This estimate is equivalent to
\[\mathcal{L}^n(B_{\epsilon_kR}(y_k)\cap\{\rho_{\epsilon_k}(y)>\frac{\gamma}{2}\})\geq C (\epsilon_k R)^n,\;R\geq\delta.\]
In particular, for $R=r/\epsilon_k$, we get
\begin{equation}
\label{dens-est-y}
\mathcal{L}^n(B_r(y_k)\cap\{\rho_{\epsilon_k}(y)>\frac{\gamma}{2}\})\geq C r^n.
\end{equation}
Since $B_r(y_k)\subset D_1$ and $\rho_0=0$ a.e. on $D_1$ (\ref{dens-est-y}) implies
\[\int_D\vert\rho_{\epsilon_k}-\rho_0\vert dy\geq\int_{B_r(y_k)}\rho_{\epsilon_k}dy\geq\frac{\gamma}{2}C r^n\]
which contradicts (\ref{ro-limit}). The proof is complete.
\end{proof}
\section{A Lower Bound for the Energy}
In this section we adopt the following hypothesis
\begin{description}
\item[({HC})] There exists $N\geq 2$ and $N$ distinct points $a_1,\ldots,a_N\in\R^m$ such that
\[0=W(a_j)<W(u),\;j=1,\ldots,N,\;u\in\R^m\setminus\{a_1,\ldots,a_N\}.\]
Moreover $W:\R^m\rightarrow\R$ is as in ({HA}) for $a=a_j,\;j=1,\ldots,N$.
\end{description}
From the monotonicity formula (see (1.4) in \cite{a}), which holds for general Lipschitz $W\geq 0$, it follows that {\it any} solution to $\Delta u-W_u(u)=0$ satisfies the lower bound
\begin{equation}\label{weak-lb}
J_{B_R(x_0)}(u)\geq R^{n-2}J_{B_1(x_0)}(u),\;R\geq 1.
\end{equation}
If $W(u)=(1-\vert u\vert^2)^2$ and, more generally, if the set of the zeros of $W$ is not totally disconnected, the lower bound above is sharp (see (2.4) in Farina \cite{fa}). On the other hand for the class of phase transition potentials defined in ({HC}) above, under the hypothesis of minimality we have
\begin{proposition}\label{lb}
Let $u:\R^n\rightarrow\R^m$ be nonconstant and minimal in the sense of {\rm({HB})}, and pointwise bounded uniformly over $\R^n$ {\rm(cfr.  (\ref{grad-bound}))}. Then we have
\begin{equation}\label{lb-1}
\int_{B_R(x_0)}\Big(\frac{1}{2}\vert\nabla u\vert^2+W(u)\Big)dx\geq C R^{n-1},\;R\geq R(x_0).
\end{equation}
with $C>0$ independent of $x_0$.
\end{proposition}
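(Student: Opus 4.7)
The plan is to combine Theorem \ref{rigidity} (Liouville) with Theorem A (density estimate) and Lemma \ref{energy-upper-bound} (energy upper bound) to locate, inside $B_R(x_0)$, two disjoint ``well regions'' of volume $\sim R^n$, and then to extract by a relative isoperimetric plus co-area argument a transition layer of volume $\sim R^{n-1}$ on which $W(u)\geq w_0>0$.

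Since $u$ is nonconstant and (HC) supplies ({HA}) at each $a_j$, Theorem \ref{rigidity} applied with $a=a_j$ for each $j$ forces $\inf_x d(u(x),\{a_i:i\neq j\})=0$. A finite pigeonhole over the $N$ wells produces distinct indices $i_1,i_2$ and points $y_1,y_2\in\R^n$ with $|u(y_\ell)-a_{i_\ell}|\leq\theta/2$ ($\ell=1,2$), and by the gradient bound in ({HB}) these estimates persist on balls $B_{\theta/(2M)}(y_\ell)$. Set $\delta_0=\min_{i\neq k}|a_i-a_k|$ and fix $\theta,\lambda>0$ with $\lambda+\theta<\delta_0/2$. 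Theorem A applied at $y_1$ with $a=a_j$ for each $j\neq i_1$, and at $y_2$ with $a=a_{i_1}$, then yields, for all $R\geq R(x_0):=2\max(1,|x_0-y_1|,|x_0-y_2|)$,
\[
\mathcal{L}^n(B_R(x_0)\cap\{|u-a_j|>\lambda\})\geq CR^n\qquad\text{for every }j.
\]
Writing $E_j=B_R(x_0)\cap\{|u-a_j|\leq\lambda\}$ (disjoint) and $T_R=B_R(x_0)\cap\{d(u,\{a_k\})\geq\lambda\}$, this gives $\mathcal{L}^n(E_j)\leq\mathcal{L}^n(B_R)-CR^n$ for every $j$, while $W(u)\geq w_0>0$ on $T_R$ together with Lemma \ref{energy-upper-bound} yields $\mathcal{L}^n(T_R)\leq CR^{n-1}$ and hence $\sum_j\mathcal{L}^n(E_j)\geq\mathcal{L}^n(B_R)-CR^{n-1}$.

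Let $j_1=j_1(R)$ maximize $\mathcal{L}^n(E_{j_1})$; the two preceding bounds imply, for $R\geq R_1$ sufficiently large, $\mathcal{L}^n(E_{j_1})\geq cR^n$ and $\sum_{k\neq j_1}\mathcal{L}^n(E_k)\geq cR^n$ for some $c>0$ depending only on $C,N$. Consider now $v(x)=|u(x)-a_{j_1}|$, Lipschitz with $|\nabla v|\leq M$. Then $\mathcal{L}^n(\{v\leq\lambda\}\cap B_R)\geq cR^n$, and $v\geq\delta_0-\lambda$ on $E_k$ for $k\neq j_1$ gives also $\mathcal{L}^n(\{v\geq\delta_0-\lambda\}\cap B_R)\geq cR^n$. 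The relative isoperimetric inequality in $B_R$ then yields $\mathcal{H}^{n-1}(\{v=t\}\cap B_R)\geq C'R^{n-1}$ for a.e.\ $t\in(\lambda,\delta_0-\lambda)$, and co-area together with $|\nabla v|\leq M$ produces $\mathcal{L}^n(\{\lambda<v<\delta_0-\lambda\}\cap B_R)\geq C''R^{n-1}$. On this intermediate set the triangle inequality shows that $u$ is $\lambda$-far from \emph{every} $a_k$, hence $W(u)\geq w_0$ and
\[
\int_{B_R(x_0)}W(u)\,dx\geq w_0 C''R^{n-1},
\]
which is (\ref{lb-1}).

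The main obstacle is the $N\geq 3$ case: Theorem A only controls the measure of sets where $u$ is \emph{far} from a prescribed well, not where it is close, so the identification of a ``dominant'' well $j_1(R)$ may depend on $R$. The uniformity (in $x_0$) of the final constant in (\ref{lb-1}) follows from the uniformity of $\delta_0$, $w_0$, $M$, and the density-estimate constant $C$ from Theorem A.
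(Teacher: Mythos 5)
Your proof is correct and follows the same overall strategy as the paper's first proof of Proposition \ref{lb}: establish the density estimate $\mathcal{L}^n(B_R\cap\{\vert u-a_j\vert>\lambda\})\geq CR^n$ for \emph{every} well, use Lemma \ref{energy-upper-bound} to force at least two of the complementary ``near'' sets to have volume $\gtrsim R^n$, and then run a relative isoperimetric plus co-area argument on a distance-to-well function to extract an $R^{n-1}$ contribution to the energy. Two of your implementation choices differ from the paper's and are worth recording. First, to initialize Theorem A you invoke the Liouville Theorem \ref{rigidity} to produce two points near two distinct wells; the paper instead observes, more elementarily, that the image of a nonconstant continuous map is connected and hence contains a point $u(\xi)$ at positive distance $\gamma$ from the finite set $\{a_1,\dots,a_N\}$, so a single center $\xi$ serves for all $N$ applications of Theorem A. Both are legitimate (Theorem \ref{rigidity} precedes the proposition in the paper, so there is no circularity), but the paper's version is lighter. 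Second, in the co-area step the paper takes $\varphi(x)=d(a_-,u(x))$ with $d$ the geodesic distance weighted by $\sqrt{2W}$ as in \cite{b}, so that $\vert\nabla\varphi\vert\leq\sqrt{2W(u)}\vert\nabla u\vert\leq\frac{1}{2}\vert\nabla u\vert^2+W(u)$ and the co-area integral is bounded directly by $J_{B_R}(u)$ via the Modica--Mortola inequality; you instead take the Euclidean distance $v=\vert u-a_{j_1}\vert$, use the uniform gradient bound $\vert\nabla v\vert\leq M$ from (\ref{grad-bound}) to convert the level-set estimate into a volume bound for the transition layer, and then use $W\geq w_0>0$ there. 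Your version is more elementary but leans on the pointwise gradient bound, which the paper's co-area step does not need; the conclusion and the uniformity in $x_0$ are the same either way. Your closing remark about the dominant well $j_1(R)$ depending on $R$ is exactly the caveat the paper itself makes (the pair $a_-,a_+$ may depend on $R$), and it causes no harm since all constants are uniform over the finitely many choices of $j_1$.
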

\begin{proof}
Since $u$ is continuous and nonconstant there are $\gamma>0$ and $\xi\in\R^n$ such that $\vert u(\xi)-a_j\vert>\gamma,\;j=1,\ldots,N$. Thus $\mathcal{L}^n(B_1(\xi)\cap\{\vert u(\xi)-a_j\vert>\gamma/2\})\geq\mu_0,\;j=1,\ldots,N$ for some $\mu_0>0$ and so by Theorem A
\begin{equation}\label{muj-rn}
\mathcal{L}^n(B_R(\xi)\cap\{\vert u(\xi)-a_j\vert>\frac{\gamma}{2}\})\geq C R^n,\;R\geq 1,\;j=1,\ldots,N.
\end{equation}
This and the same argument as in the proof of Theorem \ref{main-1} imply
\[\sum_{i\neq j}\mathcal{L}^n(B_R(\xi)\cap\{\vert u(\xi)-a_i\vert<\frac{\gamma}{2}\})\geq C R^n,\;R\geq 1,\;j=1,\ldots,N.\]
It follows that, for each $R\geq 1$, at least for two distinct $a_-,a_+\in\{a_1,\ldots,a_N\}$ we have
\begin{equation}\label{two-big-sets}
\mathcal{L}^n(B_R(\xi)\cap\{\vert u-a_\pm\vert<\frac{\gamma}{2}\})\geq C R^n.
\end{equation}
Define $\varphi:\R^n\rightarrow\R$ by setting $\varphi(x)=d(a_-,u(x))$ with $d(z_1,z_2)$ \footnote{ $d(z_1,z_2)$ is he geodesic distance (\cite{b} pag. 71)} given by.
\[d(z_1,z_2)=\inf\{\int_0^1\sqrt{2W(\zeta(s))}\vert\zeta^\prime(s)\vert ds,\;\zeta\in C^1([0,1];\R^m),\;\zeta(0)=z_1,\,\zeta(1)=z_2\}.\]
By (\ref{grad-bound}) we have $d(a_\pm,z)\leq C\vert z-a_\pm\vert$ with $C=\max_i\max_{\vert z-a_i\vert\leq M}\sqrt{2W(z)}$. It follows that, provided $\gamma\in(0,d(a_-,a_+)/C)$,
\begin{equation}\label{are-inside}
\begin{split}
&\{\vert u-a_-\vert<\frac{\gamma}{2}\}\subset\{d(a_-,u)<t\},\\
&\{\vert u-a_+\vert<\frac{\gamma}{2}\}\subset\{d(a_-,u)>t\},
\end{split}\text{ for }\;t\in(C\frac{\gamma}{2},d(a_-,a_+)-C\frac{\gamma}{2}).
\end{equation}
This and the relative isoperimetric inequality ( see \cite{eg} pag. 190)
\[\min\{\mathcal{L}^n(B_R(\xi)\cap\{\varphi<t\}),\mathcal{L}^n(B_R(\xi)\setminus\{\varphi<t\})\}^\frac{n-1}{n}\leq C\mathcal{H}^{n-1}(B_R(\xi)\cap\varphi^{-1}(t))\]
imply via (\ref{two-big-sets}) the estimate
\begin{equation}\label{relative}
C R^{n-1}\leq\mathcal{H}^{n-1}(B_R(\xi)\cap\varphi^{-1}(t))\;\text{ for }\;t\in(\alpha,\beta)
\end{equation}
where $\alpha=C\frac{\gamma}{2}$ and $\beta=d(a_-,a_+)-C\frac{\gamma}{2}$.
From Proposition 2.1 in \cite{b} and (\ref{grad-bound}) we have that $\varphi$ is lipschitz and
\begin{equation}\label{2inequality}
\int_A\vert D\varphi\vert dx\leq\int_A\sqrt{2W(u)}\vert D u\vert dx
\end{equation}
for all bounded smooth open subsets $A\subset\R^n$. Therefore by the coarea formula, the estimate (\ref{relative}), and Young's inequality we obtain
\[\begin{split}& C R^{n-1}\leq \int_\alpha^\beta\mathcal{H}^{n-1}(B_R(\xi)\cap\varphi^{-1}(t))dt=\int_{B_R(\xi)\cap\{\alpha<\varphi(x)<\beta\}}\vert D\varphi\vert dx\\&\leq\int_{B_R(\xi)}\sqrt{2W(u)}\vert D u\vert dx\leq J_{B_R(\xi)}(u)
\end{split}\]
that concludes the proof.
\end{proof}
We give another proof of Proposition \ref{lb} via linking with the sharp interface problem in \cite{b}.
\begin{proof}(Blow-down) Let $\xi\in\R^n$ as before and set $x-\xi=\frac{y}{\epsilon}$,  $u_\epsilon(y)=u(\xi+\frac{y}{\epsilon})$. Then (\ref{energy-upper-bound-1}) implies
\begin{equation}\label{u-b-eps}
\int_{\vert y\vert<r}\Big(\frac{\epsilon}{2}\vert\nabla u_\epsilon\vert^2+\frac{1}{\epsilon}W(u_\epsilon)\Big) dy\leq C r^{n-1},\;\text{ for }\;\epsilon\in(0,1),
\end{equation}
where $r=\epsilon R$ is {\underline {fixed}} once and for all. From (\ref{grad-bound}) and (\ref{u-b-eps}) it follows (see pages 73, 82 in \cite{b}) that $\|u_\epsilon\|_{{\mathrm BV}(B_r(0);\R^m)}<C$ and so along a subsequence
\[u_{\epsilon_k}\stackrel{L^1}{\,\rightarrow\,}u_0\;\text{ in }\;B_r(0)\;\text{ and }\;u_0(y)\in\{a_1,\ldots,a_N\}\;\text{ a.e. }\]
Moreover $A_j=\{u_0(y)=a_j\},\;j=1,\ldots,N$ are sets of finite perimeter in $B_r(0)$. From (\ref{muj-rn}) we have **
\[\mathcal{L}^n(B_r(0)\cap\{\vert u_{\epsilon_k}-a_j\vert>\frac{\gamma}{2}\})\geq C(\epsilon_k R)^n=C r^n,\;\text{ for }\;j=1,\ldots,N.\]
and by passing to the limit for $k\rightarrow\infty$ we obtain
\[\mathcal{L}^n(B_r(0)\cap\{\vert u_0-a_j\vert>\frac{\gamma}{2}\})\geq C r^n,\;\text{ for }\;j=1,\ldots,N.\]
Hence
\[\sum_{i\neq j}\mathcal{L}^n(B_r(0)\cap A_i)\geq C r^n,\;\text{ for }\;j=1,\ldots,N.\]
From this it follows that at least for two distinct values $a_h\neq a_l$ the sets $A_h, A_l$ have full measure:
\[\mathcal{L}^n(B_r(0)\cap A_h)\geq\frac{C}{N-1}r^n,\quad \mathcal{L}^n(B_r(0)\cap A_l)\geq\frac{C}{N-1}r^n.\]
Then the relative isoperimetric inequality implies
\begin{equation}\label{2rel}
\mathcal{H}^{n-1}(\partial A_h)\geq C r^{n-1},\quad\mathcal{H}^{n-1}(\partial A_l)\geq C r^{n-1}
\end{equation}
where $\partial A_j$ is the the relative boundary of $A_j$ in $B_r(0)$ and $C>0$ a constant. Finally by lower semicontinuity (see pag.76 in \cite{b}) and (\ref{2rel}) we have
\begin{equation}\label{lower-sem}
\begin{split}
\liminf_{k\rightarrow\infty}\int_{\vert y\vert<r}\Big(\frac{\epsilon_k}{2}\vert\nabla u_{\epsilon_k}\vert^2+\frac{1}{\epsilon_k}W(u_{\epsilon_k})\Big) dy &\geq\sum_{i,j=1,\,i\neq j}^Nd(a_i,a_j)\mathcal{H}^{n-1}(\partial A_i\cap\partial A_j)\\&\geq C\mathcal{H}^{n-1}(\partial A_h)\geq C r^{n-1}.
\end{split}
\end{equation}
Since the right hand side of (\ref{lower-sem}) is independent of the particular subsequence $\{\epsilon_k\}$ considered, we conclude that there is $\epsilon_0>0$ such that
\[\int_{\vert y\vert<r}\Big(\frac{\epsilon}{2}\vert\nabla u_\epsilon\vert^2+\frac{1}{\epsilon}W(u_\epsilon\Big) dy\geq C(\frac{r}{2})^{n-1},\;\text{ for }\;\epsilon\in(0,\epsilon_0)\]
and in the original variables
\begin{equation}\label{bound-xi}
\int_{B_R(\xi)}\Big(\frac{1}{2}\vert\nabla u\vert^2+W(u)\Big) dx\geq C R^{n-1},\;\text{ for }\;R\geq R_0.
\end{equation}
To conclude the proof we show that given $x_0\in\R^n$ there is $R(x_0)$ such that
\[J_{B_R(x_0)}(u)\geq\frac{C}{2}R^{n-1},\;\text{ for }\;R\geq R(x_0)\]
where $C>0$ is the constant in (\ref{bound-xi}). Indeed from (\ref{bound-xi}), for $R\geq R_0+\vert x_0-\xi\vert$, we have, with $d=\vert x_0-\xi\vert$,
\[J_{B_R(x_0)}(u)\geq J_{B_{R-d}(\xi)}(u)\geq C (R-d)^{n-1}\geq\frac{C}{2}R^{n-1}\;\text{ for }\;R\geq R(x_0)=
\frac{2^\frac{1}{n-1}}{2^\frac{1}{n-1}-1}d.\]
This completes the proof.
\end{proof}

\newpage

\centerline{\underline{\bf PART II}}
\section{Theorem B}
\subsection{Hypotheses and statement}
In this subsection we consider
\begin{equation}\label{2system}
\Delta u-W_u(u)=0,\quad u:\R^n\rightarrow\R^m,
\end{equation}
in the class of {\it symmetric} solutions
\[u(\hat{x})=\hat{u}(x)\]
where for $z\in\R^d,\;d\geq 1$ we denote by $\hat{z}$ the symmetric of $z$ in the plane $\{z_1=0\}$ that is
\[\hat{z}=(-z_1,z_2,\ldots,z_d).\]
We assume that $W:\R^m\rightarrow\R$ is a $C^3$ potential that satisfies
\begin{description}
\item[({ha})] $W$ is symmetric: $W(\hat{u})=W(u),\;\text{ for }\;u\in\R^m$ and
\begin{equation}\label{ha-w}
0=W(a_+)<W(u),\;\text{ for }\;u\in\overline{\R_+^m}\setminus\{a_+\}
\end{equation}
for a unique $a_+\in\R_+^m=\{u\in\R^m:u_1>0\}$.
\begin{equation}
 W_{uu}(a_+)\nu\cdot\nu\geq C_0>0,\;\text{ for }\;\vert\nu\vert=1,\hskip2cm
\label{wu-lower-bound-2}
\end{equation}
where $W_{uu}(a_+)$ is the Hessian matrix of $W$ at $a$.
\item[({hb})] There exists $e:\R\rightarrow\R^m$ ({\it connection}) satisfying
\begin{eqnarray}
\left\{\begin{array}{l}
 e_{ss}=W_u(e),\quad s\in\R \\
 e(-s)=\hat{e}(s),\;s\in\R,\\
 \lim_{s\rightarrow\pm\infty}e(s)=a_\pm,
\end{array}\right.
\end{eqnarray}
which moreover is a {\it global minimizer} of the {\it Action } functional
\[ A(v)=\int_\R(\frac{1}{2}\vert v_s\vert^2+W(v))ds\]
in the class of $v\in W_{\rm loc}^{1,2}(\R;\R^m)\cap L^\infty(\R;\R^m)$
which are symmetric and satisfy $\lim_{s\rightarrow\pm\infty}v(s)=a_\pm$.

The connection $e$ is {\it hyperbolic} in the class of {\it symmetric} perturbations in the sense that
 the operator $T$ defined by
\begin{eqnarray}
D(T)=W_S^{2,2}(\R,\R^m),\quad\quad Tv=-v^{\prime\prime}+W_{uu}(e)v,
\end{eqnarray}
where $W_S^{2,2}(\R,\R^m)\subset W^{2,2}(\R,\R^m)$ is the subspace of symmetric maps, satisfies
\begin{equation}\label{t-spectral-bound}
\langle Tv,v\rangle\geq\eta\|v\|^2,\;v\in W_S^{1,2}(\R,\R^m).
\end{equation}
for some $\eta>0$. Here $\langle,\rangle$ is the  inner product in $L^2(\R;\R^m)$ and $\|\;\|$ the associated norm and $W_{uu}$ is the Hessian matrix of $W$.
\item[({hc})]
 $u\in W_{\rm loc}^{1,2}(\R^n;\R^m)\cap L^\infty(\R^n;\R^m)$, is  {\it minimal} in the class of symmetric maps in the sense that
 \begin{equation*}
 J_\Omega(u)\leq J_\Omega(u+v),\;\text{ for each symmetric }\;v\in W_0^{1,2}(\Omega;\R^m)
 \end{equation*}
 and for every open symmetric bounded lipschitz set $\Omega\subset\R^n$.
 Moreover $u$ satisfies the estimates
 \begin{equation}
 \vert u-a\vert+\vert\nabla u\vert\leq Ke^{-k x_1},\;\text{ on }\;\overline{\R_+^n}
 \label{grad-bound-1}
 \end{equation}
 for some $k,K>0$.
\end{description}
Since we have
 \begin{equation}
 \vert e-a\vert+\vert u_{x_1}\vert\leq Ke^{-k x_1},\;\text{ on }\;x_1\geq 0,
 \label{grad-bound-e}
 \end{equation}
 it follows, via (\ref{grad-bound-1}), that
 \begin{equation}\label{2qnu-bound}
 \|u(\cdot,x_2,\ldots,x_n)-e(\cdot\,)\|_{W^{1,2}(\R;\R^m)}\leq M_1,\;\text{ for }\;(x_2,\ldots,x_n)\in\R^{n-1}
 \end{equation}
 for some constant $M_1>0$.

 \noindent We denote $\mathrm{E}^{\mathrm{xp}}\subset W_{S,\mathrm{loc}}^{1,2}(\R;\R^m)$ the exponential class of symmetric maps which, as $e$, satisfy (\ref{grad-bound-e}) with $k, K>0$ fixed constants.
\vskip.3cm
\noindent\underline{{\it Notes}}
\begin{enumerate}
\item Under hypotheses ({ha}) by Theorems 3.6, 3.7 in \cite{af3} there is a connection $e$ symmetric and global minimizer of $A$.

\item In the proof of Theorem B we utilize minimality only in symmetric cylinders.
\end{enumerate}
\noindent {\underline{\it Notation}}

As before by $\cdot$ we denote the Euclidean inner product in $\R^d$ $d\geq 2$. We write the typical $x=(x_1,\ldots,x_n)\in\R^n$ in the form $x=(s,y)$ with $s=x_1\in\R$ and $y=(x_2,\ldots,x_n)\in\R^{n-1}$. For $r>0$ and $y_0\in\R^{n-1}$ we set $\mathcal{B}_r(y_0)=\{y\in\R^{n-1}:\vert y-y_0\vert<r\}$. By $C_r(y_0)\subset\R^n$ we denote the cylinder $\R\times \mathcal{B}_r(y_0)$.

\begin{theoremB}\label{main-3}
Under hypothesis {\rm({ha})}, {\rm({hb})} and {\rm({hc})} above, there exists $\lambda^*>0$ small, independent of $u$, such that for any $\mu_0>0$ and any $0<\lambda<\lambda^*$ the condition
\begin{equation}
\mathcal{L}^{n-1}(\mathcal{B}_1(y_0)\cap\{y:\| u(\cdot,y)-e(\cdot\,)\|\geq\lambda\})\geq\mu_0\hskip1.5cm
\label{2mu0-cond}
\end{equation}
implies the estimate
\begin{equation}
\mathcal{L}^{n-1}(\mathcal{B}_R(y_0)\cap\{y:\| u(\cdot,y)-e(\cdot\,)\|\geq\lambda\})\geq C R^{n-1},\;\text{ for }\;R\geq 1
\label{2mu-cond}
\end{equation}
where $C=C(\mu_0,\lambda,K)$, is independent of $y_0$ and independent of $u$.
\end{theoremB}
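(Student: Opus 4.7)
\medskip

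\noindent\textbf{Proof proposal (Theorem B).}
The plan is to transcribe the $\alpha=2$ argument from Theorem A to the present infinite--dimensional setting, with the role of $u\mapsto|u-a|$ played by $y\mapsto\|u(\cdot,y)-e(\cdot)\|$, the role of the potential $W$ played by the effective potential $\mathcal{W}$, and the base space $\R^n$ replaced by $\R^{n-1}$ via the cylindrical structure. First, I would introduce the polar decomposition $u(\cdot,y)=e(\cdot)+q^u(y)\nu^u(\cdot,y)$ in $L^2(\R;\R^m)$, together with the test function $\sigma(\cdot,y)=e(\cdot)+q^\sigma(y)\nu^u(\cdot,y)$, $q^\sigma=\min\{q^u,q^h\}$, and take as a starting point the identity (\ref{02young-1}) and minimality on symmetric cylinders, which bound $\tfrac12\int_{\mathcal{B}_R}(|\nabla q^u|^2-|\nabla q^\sigma|^2)dy$ by $\int_{\mathcal{B}_R}(\mathcal{W}(\sigma)-\mathcal{W}(u))dy$.

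The key analytic input, and the one point where the argument is genuinely different from Theorem A, is a quadratic lower bound for $\mathcal{W}$ near $e$: there exist $\lambda^{*}>0$ and $c_{0}>0$ such that
\begin{equation*}
q^u(y)=\|u(\cdot,y)-e(\cdot)\|\leq\lambda^{*}\ \Longrightarrow\ \mathcal{W}(u(\cdot,y))\geq c_0\,(q^u(y))^2.
\end{equation*}
This I would obtain from the Taylor expansion
\begin{equation*}
A(e+w)-A(e)=\tfrac12\langle Tw,w\rangle+\mathcal{R}(w),\qquad T w=-w''+W_{uu}(e)w,
\end{equation*}
the hyperbolicity $\langle Tw,w\rangle\geq\eta\|w\|^2$ of (\ref{t-spectral-bound}), together with a cubic control on $\mathcal{R}(w)$ based on $W\in C^3$ and the uniform exponential envelope from (\ref{grad-bound-1})--(\ref{2qnu-bound}) (so that $w=u(\cdot,y)-e(\cdot)$ stays in the exponential class $\mathrm{E}^{\mathrm{xp}}$, where $\|w\|_{L^\infty}$ is dominated by $\|w\|_{L^2}$). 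This inequality plays the exact role of (\ref{wu-lower-bound-1}) in the $\alpha=2$ case of Theorem A.

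Next I would choose the test profile as in the $\alpha=2$ proof: let $\varphi:\mathcal{B}_R\to\R$ solve $\Delta\varphi=c_1\varphi$ on $\mathcal{B}_R$, $\varphi=1$ on $\partial\mathcal{B}_R$, with $c_1>0$ small enough so that in the region $\{q^u\leq\lambda^{*}\}$ the pointwise inequality
\begin{equation*}
2CA\bigl(\mathcal{W}(h)-\mathcal{W}(u)+\Delta q^h(q^u-q^h)\bigr)+\tfrac{C}{A}(q^u-q^h)^2\leq 0
\end{equation*}
holds (here $h(\cdot,y)=e(\cdot)+q^h(y)\nu^u(\cdot,y)$ and $q^h(y)=M_1\varphi(y)$, so $q^h\geq q^u$ on $\partial\mathcal{B}_R$ by (\ref{2qnu-bound})). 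The Sobolev inequality in $\R^{n-1}$ applied to $\beta^2$ with $\beta=\min\{q^u-q^\sigma,\lambda\}$, Young's inequality, integration by parts using $\nabla q^h=\nabla q^\sigma=0$ on the free boundary, and the exponential decay $\varphi(R-r)\leq e^{-c_2(R-r)}$, lead, with $R=(k+1)T$ and $\omega_j=\mathcal{L}^{n-1}((\mathcal{B}_{jT}\setminus\mathcal{B}_{(j-1)T})\cap\{q^u>\lambda^{*}\})$, to the difference inequality
\begin{equation*}
C_0\Bigl(\sum_{j=1}^{k}\omega_j\Bigr)^{\!(n-2)/(n-1)}\leq\sum_{j=1}^{k}e^{-c_2jT}\omega_{k+1-j}+\omega_{k+1},
\end{equation*}
the exact analogue of (\ref{basic-inequality}) in dimension $n-1$.

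The proof is then finished by the same induction as in Claim~6: choosing $T$ large enough to absorb the exponential sum yields $\omega_k\geq c^{*}k^{n-2}$ for all $k$, from which $\mathcal{V}_R\geq C R^{n-1}$ for $R\geq 1$ follows by summing annuli. The smallness restriction on $\lambda$ is then removed using the basic upper bound (\ref{0basic-est}). I expect the chief obstacle to be Step~2: making the quadratic coercivity of $\mathcal{W}$ on the exponential class quantitative and uniform in $y$. All the remaining steps are essentially a translation of the $\alpha=2$ argument in Theorem~A, with cylinders $C_R$ and base balls $\mathcal{B}_R\subset\R^{n-1}$ in place of $B_R\subset\R^n$, and the $L^2(\R;\R^m)$--valued polar form in place of the finite--dimensional one.
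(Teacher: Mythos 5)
Your proposal is correct and follows essentially the same route as the paper: the $L^2(\R;\R^m)$-valued polar form, the quadratic coercivity of $\mathcal{W}$ near $e$ obtained from the hyperbolicity bound (\ref{t-spectral-bound}) plus a perturbative estimate on the Hessian via the interpolation $\|w\|_{L^\infty}\leq\sqrt{2}\,\|w\|^{1/2}\|w_s\|^{1/2}$ on the exponential class (the paper's Lemma \ref{2w-properties}, which plays the role of (\ref{wu-lower-bound-1})), the test profile $q^h=\varphi\,q_M$ with $\Delta\varphi=c_1\varphi$, and then a verbatim transcription of the $\alpha=2$ induction of Theorem A in dimension $n-1$. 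The only ingredient you pass over silently is that ({hc}) gives minimality only on bounded sets, whereas (\ref{2young-1}) requires comparison on the unbounded cylinders $C_R=\R\times\mathcal{B}_R$; the paper supplies this with a short truncation argument (Lemma \ref{l-infty}) using the exponential decay (\ref{grad-bound-1}).
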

Theorem B has the following important consequence
\vskip.3cm
\begin{theorem}\label{main-4}
Assume that $W$ satisfies {\rm({ha})} and that $u:\R^n\rightarrow\R^m$ is minimal in the sense of {\rm({hc})}. Assume that there are exactly two global minimizers $e_+\neq e_-$ of the action $A$ in the symmetric class with the properties of $e$ in {\rm({hb})} above.
%\[0=\mathcal{W}(e_1)=\mathcal{W}(e_2)<\mathcal{W}(v),\;\text{ for }\;v\in\mathrm{E}^{\mathrm{xp}}\setminus\{e_1,e_2\}.\]
Then the condition
\begin{equation}
\mathcal{L}^{n-1}(\mathcal{B}_1(y_0)\cap\{y:\| u(\cdot,y)-e_+(\cdot\,)\|\leq\theta\})\geq \mu_0>0\hskip1.5cm
\label{2mu0-cond-1}
\end{equation}
$\theta\in(0,\|e_+-e_-\|)$, arbitrary otherwise, implies the estimate
\begin{equation}
\mathcal{L}^{n-1}(\mathcal{B}_R(y_0)\cap\{y:\| u(\cdot,y)-e_+(\cdot\,)\|\leq\theta\})\geq C R^{n-1},\;\text{ for }\;R\geq 1,
\label{2mu-cond-1}
\end{equation}
where $C=C(\mu_0,\lambda,K)$, is independent of $y_0$ and independent of $u$. An analogous statement applies to $e_-$.
\end{theorem}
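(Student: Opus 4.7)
The plan is to follow the scheme that produced Theorem~\ref{main-1} from Theorem~A, with Theorem~B playing the role of Theorem~A and the basic cylinder estimate~(\ref{0basic-est}) replacing Lemma~\ref{energy-upper-bound}. By the triangle inequality, $\|u(\cdot,y)-e_+\|\leq\theta$ forces $\|u(\cdot,y)-e_-\|\geq\|e_+-e_-\|-\theta=:\lambda>0$, so the hypothesis~(\ref{2mu0-cond-1}) gives
\[\mathcal{L}^{n-1}(\mathcal{B}_1(y_0)\cap\{y:\|u(\cdot,y)-e_-\|\geq\lambda\})\geq\mu_0.\]
After shrinking $\lambda$ if necessary so that $\lambda<\lambda^*$ (monotonicity in $\lambda$ preserves the hypothesis), Theorem~B applied with $e=e_-$ yields
\[\mathcal{L}^{n-1}(\mathcal{B}_R(y_0)\cap\{y:\|u(\cdot,y)-e_-\|\geq\lambda\})\geq C_1 R^{n-1},\quad R\geq 1.\]

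Next I would split
\[\{y:\|u(\cdot,y)-e_-\|\geq\lambda\}=\{y:\|u(\cdot,y)-e_+\|\leq\theta\}\cup\mathcal{I},\]
with $\mathcal{I}:=\{y:\|u(\cdot,y)-e_+\|>\theta\}\cap\{y:\|u(\cdot,y)-e_-\|\geq\lambda\}$. On $\mathcal{I}$ the slice $u(\cdot,y)$ is bounded away from both global minimizers of the action $A$, so one expects a uniform positive lower bound $\mathcal{W}(u(\cdot,y))\geq w_0>0$. Granted this, the slice-integrated form of~(\ref{0basic-est}),
\[\int_{\mathcal{B}_R(y_0)}\mathcal{W}(u(\cdot,y))\,dy\leq\int_{C_R(y_0)}\Bigl(\tfrac{1}{2}|\nabla u|^2+W(u)-A(e)\Bigr)dx\leq C_2 R^{n-2},\]
forces $\mathcal{L}^{n-1}(\mathcal{B}_R(y_0)\cap\mathcal{I})\leq w_0^{-1}C_2 R^{n-2}$. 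Subtracting from the preceding bound,
\[\mathcal{L}^{n-1}(\mathcal{B}_R(y_0)\cap\{y:\|u(\cdot,y)-e_+\|\leq\theta\})\geq C_1 R^{n-1}-w_0^{-1}C_2 R^{n-2}\geq CR^{n-1}\]
for $R\geq R_0$ large, and the intermediate range $1\leq R\leq R_0$ is absorbed by adjusting the constant via the base hypothesis~(\ref{2mu0-cond-1}). An analogous argument works for $e_-$.

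The genuine obstacle is producing the uniform lower bound $\mathcal{W}\geq w_0>0$ on $\mathcal{I}$. In the scalar analog (proof of Theorem~\ref{main-1}) this is trivial from continuity of $W$ on the compact ball $\{|z|\leq M\}$, but here the slices live in an infinite-dimensional space of symmetric maps. I would handle it by a compactness/contradiction argument: a sequence $v_j=u(\cdot,y_j)$ with $\mathcal{W}(v_j)\to 0$ is uniformly bounded in $W^{1,2}(\R;\R^m)$ by~(\ref{2qnu-bound}) and, thanks to the uniform exponential decay recorded in~(\ref{grad-bound-1}) (i.e.\ $v_j\in\mathrm{E}^{\mathrm{xp}}$ with fixed constants $k,K$), enjoys uniform tail control at $\pm\infty$; combined with the Rellich compactness on bounded intervals this yields a subsequence converging strongly in $L^2(\R;\R^m)$ to a symmetric $v_*$ with $\lim_{s\to\pm\infty}v_*(s)=a_\pm$. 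Lower semicontinuity of $A$ then gives $\mathcal{W}(v_*)=0$, whence $v_*\in\{e_+,e_-\}$; this contradicts the passage to the limit of $\|v_j-e_+\|>\theta$ and $\|v_j-e_-\|\geq\lambda$ under strong $L^2$-convergence. With $w_0$ in hand, the remainder is just the two-minima adaptation of the scheme already used to deduce Theorem~\ref{main-1}.
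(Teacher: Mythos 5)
Your argument is correct and coincides with the paper's own proof: reduce to Theorem B applied with $e=e_-$ via the triangle inequality, then discard the intermediate set $\mathcal{I}$ (slices bounded away from both $e_+$ and $e_-$) using a uniform lower bound $\mathcal{W}\geq w_0>0$ there together with the $O(R^{n-2})$ energy upper bound of Lemma \ref{2energy-upper-bound}. The only difference is that you supply the compactness/lower-semicontinuity argument for the bound $\mathcal{W}\geq w_0$ on $\mathcal{I}$, which the paper asserts without proof; your sketch of it (exponential class, uniform tails, Rellich, lsc of $A$) is sound.
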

%\begin{proof}
%From Lemma \ref{2w-properties} below $\mathcal{W}$ has in a neighborhood of $e_i,\;i=1,2$ the  behavior of $W$ in a neighborhood of $a_i,\;i=1,2$. Specifically it results $\inf\{\mathcal{W}:\|v-e_i\|\geq\gamma,\;
%v\in{\mathrm E^{xp}}\}>0$. Therefore the same abstract reasoning
% that deduces Theorem \ref{main-1} from Theorem A and Lemma \ref{energy-upper-bound} applied to Theorem B and Lemma \ref{2energy-upper-bound} yields the result.
%\end{proof}
\subsection{The Proof of Theorem B}
\vskip.3cm
1.\underline{The Polar Form and the Effective Potential}

We will utilize the  {\it polar form  with respect to $e$} of a vector map $u\in W_{\mathrm{loc}}^{1,2}(\R^n;\R^m)\cap L^\infty(\R^n;\R^m)$. We write
\[u(s,y)=e(s)+q^u(y)\nu^u(s,y),\;(s,y)\in\R^n\]
where
\[q^u(y)=\|u(\cdot,y)-e(\cdot\,)\|\]
and
\begin{eqnarray}\label{2polar}
\nu^u(\cdot,y)=\left\{\begin{array}{l}
\frac{u(\cdot,y)-e(\cdot\,)}{\|u(\cdot,y)-e(\cdot\,)\|}\;\text{ if }\;q^u(y)\neq 0,\\\\
0,\;\;\text{ otherwise }.
\end{array}\right.
\end{eqnarray}
We have
\begin{equation}\label{2grad}
\frac{\partial u}{\partial y_i}=\frac{\partial q^u}{\partial y_i}\nu^u+q^u\frac{\partial \nu^u}{\partial y_i}
\end{equation}
and therefore observing that
\begin{equation}\label{2nu-propoerties}
\|\nu^u(\cdot,y)\|=1,\;\;\langle\nu^u(\cdot,y),\frac{\partial \nu^u}{\partial y_i}(\cdot,y)\rangle=0,\;i=1,\ldots,n-1,
\end{equation}
we obtain the following {\it polar representation} of the energy of $u$
\begin{equation}\label{2energy-polar0}
\begin{split}
&\int_{C_r(y_0)}\Big(\frac{1}{2}\vert\nabla u\vert^2+W(u)\Big) dx\\&=\int_{\mathcal{B}_r(y_0)}\Big(\frac{1}{2}\Big(\vert\nabla q^u\vert^2+(q^u)^2\sum_{i=1}^{n-1}\|\frac{\partial \nu^u}{\partial y_i}\|^2\Big)+\mathcal{W}(u)+A(e)\Big) dy
\end{split}
\end{equation}
where $\mathcal{W}:e+W^{1,2}(\R;\R^m)\rightarrow\R$ the {\it Effective Potential} is defined by
\begin{equation}\label{effective-pot}
\begin{split}
&\mathcal{W}(v)=A(v)-A(e)\\
&=\int_\R\Big(\frac{1}{2}(\|v_s\|^2-\|e_s\|^2)+W(v)-W(e)\Big) ds,\;\text{ for } v-e\in W_S^{1,2}(\R;\R^m).
\end{split}
\end{equation}
As it is standard in variational arguments, adding a constant to the integrand in (\ref{2energy-polar0}) does not affect what follows. Therefore we disregard the constant $A(e)$ in (\ref{2energy-polar0}) and define the modified energy $J_{C_r(y_0)}(u)$ by setting
\begin{equation}\label{2energy-polar}
J_{C_r(y_0)}(u)=\int_{\mathcal{B}_r(y_0)}\Big(\frac{1}{2}\Big(\vert\nabla q^u\vert^2+(q^u)^2\sum_{i=1}^{n-1}\|\frac{\partial \nu^u}{\partial y_i}\|^2\Big)+\mathcal{W}(u)\Big) dy
\end{equation}
where we have slightly abused the notation in (\ref{energy-def}). Note that
\begin{equation}\label{je0}
J_{C_r(y_0)}(e)=0.
\end{equation}

\newpage
\begin{lemma}\label{2w-properties}
We have
\begin{description}
\item[(i)]
%\begin{equation}\label{2w-pos}
$\mathcal{W}\geq 0$.
%\end{equation}
\item[(ii)]
Let $\SF=W^{1,2}(\R;\R^m)\cap\{\|v\|=1\}$. Assume that $v(s)=e(s)+q\nu,\;q\in\R,\,\nu\in\SF$ satisfies (\ref{2qnu-bound}). Then there are constants $c_0>0$ and $\bar{q}>0$ such that
\begin{equation}\label{2w-derivative}
D_{qq}\mathcal{W}(e+q\nu)\geq c_0,\;\text{ for }\;q\in[0,\bar{q}],\;\nu\in\SF.
\end{equation}
\end{description}
\end{lemma}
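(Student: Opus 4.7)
For part (i), the assertion $\mathcal{W}\ge 0$ is an immediate consequence of hypothesis (hb): since $e$ is a global minimizer of the action $A$ in the symmetric class of maps connecting $a_-$ to $a_+$, one has $A(v)\ge A(e)$ for every admissible symmetric $v$, whence $\mathcal{W}(v) = A(v)-A(e)\ge 0$.

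For part (ii), I note first that in the application the polar direction $\nu^u(\cdot,y)$ is automatically symmetric (since $u$ and $e$ both are), so $\SF$ is to be read intersected with the symmetric subspace $W_S^{1,2}(\R;\R^m)$; this is what makes the spectral bound (\ref{t-spectral-bound}) applicable. A direct differentiation of (\ref{effective-pot}) gives
\[
Q(q,\nu):=\frac{d^2}{dq^2}\mathcal{W}(e+q\nu) = \int_{\R}\bigl[|\nu_s|^2 + W_{uu}(e+q\nu)\nu\cdot\nu\bigr]\,ds,
\]
and an integration by parts identifies $Q(0,\nu) = \langle T\nu,\nu\rangle$, so (\ref{t-spectral-bound}) together with $\|\nu\|=1$ yields $Q(0,\nu)\ge\eta$. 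The remaining task is to bound $|Q(q,\nu)-Q(0,\nu)|$ by $\eta/2$, uniformly in $\nu\in\SF$, for $q\in[0,\bar q]$ small.

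The constraint (\ref{2qnu-bound}) applied to $v=e+q\nu$ forces $\|q\nu\|_{W^{1,2}(\R;\R^m)}\le M_1$, hence by the one-dimensional Sobolev embedding $\|q\nu\|_{L^\infty}\le CM_1$; so $e+q\nu$ takes values in a fixed compact set $K\subset\R^m$, on which the $C^3$-regularity of $W$ yields $|W_{uu}|\le C_2$ and $|W_{uu}(e+q\nu)-W_{uu}(e)|\le L\,|q\nu|$. I then split according to the size of $\int|\nu_s|^2\,ds$. If it is $\le N$, then $\|\nu\|_{W^{1,2}}^2\le N+1$ and $\|\nu\|_{L^\infty}\le C_1(N+1)^{1/2}$ by 1D Sobolev, giving
\[
|Q(q,\nu)-Q(0,\nu)| \le Lq\,\|\nu\|_{L^\infty}\|\nu\|^2 \le Lq\,C_1(N+1)^{1/2};
\]
if instead it is $>N$, the kinetic contribution dominates and
\[
Q(q,\nu) \ge \int_{\R}|\nu_s|^2\,ds - C_2\|\nu\|^2 > N - C_2.
\]
Fixing $N:=C_2+\eta/2$ handles the second regime, and then choosing $\bar q$ so that $L\bar q\,C_1(N+1)^{1/2}\le\eta/2$ handles the first, producing $Q(q,\nu)\ge\eta/2=:c_0$ for every $q\in[0,\bar q]$ and $\nu\in\SF$, which is (\ref{2w-derivative}).

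The principal obstacle is that $\|\nu_s\|$ is \emph{not} a priori bounded on $\SF$, so a naive $C^3$-Taylor expansion of $W$ along $q\nu$ does not yield uniform control of $Q(q,\nu)-Q(0,\nu)$. The two-regime splitting sidesteps this by observing that $\int|\nu_s|^2\,ds$ itself appears as one of the two terms in $Q(q,\nu)$, so when it is large the desired lower bound is automatic, while when it is bounded the Sobolev embedding supplies the $L^\infty$-control on $\nu$ needed to close the $C^3$ estimate.
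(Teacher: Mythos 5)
Your proof is correct, and the key technical step is handled by a genuinely different device than the one in the paper. Both arguments start the same way: differentiate twice, cancel the $\|\nu_s\|^2$ terms in $Q(q,\nu)-Q(0,\nu)$, identify $Q(0,\nu)=\langle T\nu,\nu\rangle\geq\eta$ via (\ref{t-spectral-bound}) (and your remark that $\nu^u$ is automatically symmetric, so the spectral bound applies, is a point the paper leaves implicit). The difference is in how the perturbation term $\int(W_{uu}(e+q\nu)-W_{uu}(e))\nu\cdot\nu\,ds$ is controlled uniformly over $\nu\in\SF$, where $\|\nu\|_{L^\infty}$ is not a priori bounded. The paper applies the interpolation inequality $\|f\|_{L^\infty}\leq\sqrt{2}\,\|f\|^{1/2}\|f_s\|^{1/2}$ to $f=q\nu$ and uses (\ref{2qnu-bound}) to get $\|q\nu\|_{L^\infty}\leq\sqrt{2}M_1^{1/2}q^{1/2}$, i.e.\ $\|\nu\|_{L^\infty}\leq\sqrt{2}M_1^{1/2}q^{-1/2}$; the factor $q$ from the Lipschitz bound on $W_{uu}$ then beats the $q^{-1/2}$ blow-up and yields the clean uniform estimate $|Q(q,\nu)-Q(0,\nu)|\leq C_1q^{1/2}$ with no case distinction, and the explicit threshold $\bar q=\eta^2/(4C_1^2)$. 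You instead use (\ref{2qnu-bound}) only to confine $e+q\nu$ to a compact set and then dispose of the possible unboundedness of $\|\nu_s\|$ (hence of $\|\nu\|_{L^\infty}$) by a dichotomy, exploiting the structural fact that $\|\nu_s\|^2$ enters $Q(q,\nu)$ with a positive sign: when $\|\nu_s\|^2>N$ the kinetic term alone gives the lower bound, and when $\|\nu_s\|^2\leq N$ the one-dimensional Sobolev embedding controls $\|\nu\|_{L^\infty}$ and the naive Lipschitz estimate closes. Your route is slightly longer but more robust (it does not need the sharp form of the interpolation inequality, only boundedness of $W_{uu}$ on compacts plus local Lipschitz continuity); the paper's route is shorter and gives a quantitative $O(q^{1/2})$ modulus for the second derivative that is uniform over all of $\SF$, not just over each regime separately. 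Both are complete proofs of (\ref{2w-derivative}).
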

\begin{proof}
(i) follows from ({hb}). To prove (ii) we begin
by differentiating twice $\mathcal{W}(e+q\nu)$ with respect to $q$. We obtain
\begin{eqnarray}\label{2second-d}
D_{qq}\mathcal{W}(e+q\nu)&=&\|\nu_s\|^2+\int_\R W_{uu}(e+q\nu)\nu\cdot\nu ds\\\nonumber
&=&
D_{qq}\mathcal{W}(e+q\nu)|_{q=0}+\int_\R (W_{uu}(e+q\nu)-W_{uu}(e))\nu\cdot\nu ds.
\end{eqnarray}
From the interpolation inequality:
\begin{equation}
\begin{split}
\|f\|_{L^\infty(\R;\R^m)}\leq &\sqrt{2}\|f\|^{\frac{1}{2}}\|f_s\|^{\frac{1}{2}},\\
\leq &\sqrt{2}\|f\|_{W^{1,2}(\R;\R^m)},
\end{split}f\in W^{1,2}(\R;\R^m),
\end{equation}
applied to $q\nu$
%for $q\nu\in\mathcal{B}_l^{1,2}$
we obtain via the second inequality
\begin{equation}
\|q\nu\|_{L^\infty(\R;\R^m)}\leq \sqrt{2}M_1,
\end{equation}
with $M_1$ the constant in (\ref{2qnu-bound}),
and via the first
\begin{equation}
\|\nu\|_{L^\infty(\R;\R^m)}\leq \sqrt{2}M_1^{\frac{1}{2}}q^{-\frac{1}{2}},
\end{equation}
since $\|q\nu\|=q$ and $\|q\nu_s\|\leq M_1$.
Therefore we have
\begin{eqnarray}\label{w-uu-estimate}
\vert W_{u_iu_j}(e(s)+ q\nu(s))-W_{u_iu_j}(e(s))\vert\leq
\sqrt{2}M_1^{\frac{1}{2}}\overline{W}^{\prime\prime\prime}q^{\frac{1}{2}},
\end{eqnarray}
where $\overline{W}^{\prime\prime\prime}$ is defined by
\begin{eqnarray}
\overline{W}^{\prime\prime\prime}:=\max_{\left.\begin{array}{l}
1\leq i,j,k\leq m\\
s\in\R, \vert\tau\vert\leq 1
 \end{array}\right.}W_{u_iu_ju_k}(e(s)+ \tau\sqrt{2}M_1).
\end{eqnarray}
From (\ref{w-uu-estimate}) we get
\begin{eqnarray}\label{int-wuu-wuu-estimate}
\vert\int_\R(W_{uu}(e+q\nu)-W_{uu}(e))\nu\cdot\nu\vert ds\leq
C_1q^\frac{1}{2}\langle\nu,\nu\rangle=C_1q^\frac{1}{2},
\end{eqnarray}
where $C_1>0$ is a constant that depends on $M_1$.
We now observe that
\begin{eqnarray}\label{tl-equal-t}
D_{qq}\mathcal{W}(e+q\nu)|_{q=0}=\langle T\nu,\nu\rangle\geq\eta\|\nu\|^2=\eta
\end{eqnarray}
where we have also utilized (\ref{t-spectral-bound}).
Thus (\ref{tl-equal-t}) and (\ref{int-wuu-wuu-estimate}) in (\ref{2second-d}) yield
\begin{eqnarray}
D_{qq}\mathcal{W}(e+q\nu)\geq c_0:=\frac{\eta}{2},\;\;\text{ for }\,q\in[0,\bar{q}],
\end{eqnarray}
where $\bar{q}=\frac{1}{4}\frac{\eta^2}{C_1^2}$.
This concludes the proof of the lemma.
\end{proof}

In the following lemma we show that in the definition of minimality in ({hc}) we can extend the class of sets to include unbounded cylinders aligned to the $x_1$ axis.
\begin{lemma}\label{l-infty}
Let $u:\R^n\rightarrow\R^m$ be  minimal as in {\rm({hc})} above. Given a bounded open set $O\subset\R^{n-1}$, we have
\begin{equation}\label{min-infty}
J_{\R\times O}(u)=\min_{v\in u+W_{0 S}^{1,2}(\R\times O;\R^m)}J_{\R\times O}(v),
\end{equation}
where $W_{0 S}^{1,2}(\R\times O;\R^m)$ is the closure in $W_S^{1,2}(\R\times O;\R^m)$ of the smooth maps $v$ that satisfy $v=0$ on $\R\times\partial O$.
\end{lemma}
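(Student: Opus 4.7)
The plan is to reduce to the bounded-domain minimality in (hc) via a cutoff in the $s$-direction, then pass to the limit $T\to\infty$. Take a symmetric $\varphi\in W_{0S}^{1,2}(\R\times O;\R^m)$ and set $v:=u+\varphi$; the inequality is vacuous when $J_{\R\times O}(v)=+\infty$, so assume it is finite. For $T>0$ fix an even cutoff $\eta_T\in C^1(\R;[0,1])$ with $\eta_T\equiv 1$ on $[-T,T]$, supported in $[-T-1,T+1]$, with $|\eta_T'|\leq 2$, and define $\varphi_T:=\eta_T\varphi$ and $v_T:=u+\varphi_T$. Then $\varphi_T$ is symmetric (since $\eta_T$ is even and $\varphi$ is symmetric), supported in the bounded symmetric Lipschitz cylinder $\Omega_T:=(-T-1,T+1)\times O$ (assuming $O$ Lipschitz, with the general case handled by exhausting $O$ by Lipschitz subdomains), and its trace on $\partial\Omega_T$ vanishes; hence $\varphi_T|_{\Omega_T}\in W_{0S}^{1,2}(\Omega_T;\R^m)$.

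Minimality (hc) on $\Omega_T$ yields $J_{\Omega_T}(u)\leq J_{\Omega_T}(v_T)$ in the original-energy sense. Since $v_T\equiv u$ on $(\R\times O)\setminus\Omega_T$, adding to both sides the common tail
\[
\int_{(\R\times O)\setminus\Omega_T}\Bigl(\tfrac12|\nabla u|^2+W(u)-\tfrac12|e_s|^2-W(e)\Bigr)dx
\]
and subtracting the constant $\int_{\Omega_T}(\tfrac12|e_s|^2+W(e))dx$ converts this to
\[
J_{\R\times O}(u)\leq J_{\R\times O}(v_T)
\]
in the modified-energy notation of (\ref{2energy-polar}). The tail integral is finite thanks to the exponential decay (\ref{grad-bound-1}) and (\ref{grad-bound-e}) combined with the quadratic behavior of $W$ near $a_+$ from (\ref{wu-lower-bound-2}).

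It remains to verify $J_{\R\times O}(v_T)\to J_{\R\times O}(v)$ as $T\to\infty$. Dominated convergence yields $\varphi_T\to\varphi$ in $W^{1,2}(\R\times O;\R^m)$ (the integrand $|\nabla(\varphi_T-\varphi)|^2+|\varphi_T-\varphi|^2$ is dominated by a constant multiple of $|\nabla\varphi|^2+|\varphi|^2$ and vanishes pointwise), so the kinetic part of the energy converges. For the potential part, split $\R\times O$ into $\{|s|\leq T\}\times O$ where $v_T=v$, the annulus $\{T<|s|<T+1\}\times O$, and $\{|s|\geq T+1\}\times O$ where $v_T=u$; on the last piece the difference $\int[\mathcal{W}(v)-\mathcal{W}(u)]dx$ tends to $0$ because both $J_{\R\times O}(v)$ and $J_{\R\times O}(u)$ are finite, and on the annulus the integrand is dominated by a fixed $L^1$ function and hence its integral vanishes as $T\to\infty$.

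The main obstacle is precisely this tail control: the nonlinearity $W$ must be handled carefully because the underlying integrals are over an unbounded cylinder. This is resolved by noting that $W$ is $C^3$ and hence locally Lipschitz on the uniformly bounded range of $u,v,v_T$, so pointwise convergence $v_T\to v$ together with a dominant of the form $C(|\varphi|^2+|e-a_+|^2+|u-a_+|^2)$ (valid because $v_T$ and $v$ are exponentially close to $a_\pm$ for large $|s|$ by (\ref{grad-bound-1})) allows one to apply dominated convergence to obtain $\int_{\R\times O}(W(v_T)-W(v))dx\to 0$. Combining all the estimates gives $J_{\R\times O}(u)\leq\lim_{T\to\infty}J_{\R\times O}(v_T)=J_{\R\times O}(v)$, completing the argument.
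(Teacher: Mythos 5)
Your argument is correct and is essentially the paper's own proof: your cutoff $v_T=u+\eta_T\varphi$ is exactly the paper's linear interpolation $(1+l-s)v+(s-l)u$ between the competitor and $u$ on a transition slab, followed by minimality on the bounded cylinder and a passage to the limit (the paper phrases it as a contradiction with an explicit $\mathrm{O}(e^{-kl})$ error, you phrase it directly via dominated convergence). One small caveat: the competitor $v$ need not itself decay exponentially as you assert, but your dominant is still valid since $\varphi\in L^2(\R\times O)$ and $W$ is bounded by $C\vert z-a_\pm\vert^2$ on the bounded range of the maps involved, so this does not affect the conclusion.
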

\begin{proof}
Assume there are $\eta>0$ and $v\in u+W_{0 S}^{1,2}(\R\times O;\R^m)$ such that
\begin{equation}\label{cont-assum}
J_{\R\times O}(u)-J_{\R\times O}(v)\geq\eta.
\end{equation}
For each $l>0$ define $\tilde{v}\in W_{0 S}^{1,2}(\R\times O;\R^m)$ by
\[
\tilde{v}=\left\{\begin{array}{l} v,\quad\text{ for }\;s\in[0,l],\;y\in O,\\
(1+l-s)v+(s-l)u, \;s\in[l,l+1],\;y\in O,\\
u,\quad\text{ for } \;s\in[l+1,+\infty),\;y\in O.
\end{array}\right.\]
The minimality of $u$ implies
\begin{equation}\label{before-limit}
0\geq J_{[-l-1,l+1]\times O}(u)-J_{[-l-1,l+1]\times O}(\tilde{v})=J_{[-l-1,l+1]\times O}(u)-J_{[-l,l]\times O}(v)+
\mathrm{O}(e^{-k l}),
\end{equation}
where we have also used the fact that both $u$ and $v$ belong to $W_S^{1,2}(\R\times O;\R^m)$ and satisfy (\ref{grad-bound-1}). Taking the limit for $l\rightarrow +\infty$ in (\ref{before-limit}) yields
\[0\geq J_{\R\times O}(u)-J_{\R\times O}(v),\]
in contradiction with (\ref{cont-assum}).
\end{proof}
For $q^h\in W^{1,2}(C_R;\R^m)\cap L^\infty(C_R;\R^m)$, $q^h\geq 0$, let the map $\sigma$  defined via $\sigma=e+q^\sigma\nu^u$, $q^\sigma=\min\{q^h,q^u\}$. We have $\sigma\in W^{1,2}(C_R;\R^m)\cap L^\infty(C_R;\R^m)$ \cite{afs}. The minimality of $u$ and the polar form (\ref{2energy-polar}) of the energy imply the inequality

 \begin{equation}\label{2young-1}
\begin{split}
&\frac{1}{2}\int_{\mathcal{B}_R}(\vert\nabla q^u\vert^2-\vert\nabla q^\sigma\vert^2) dy\\
&=J_{C_R}(u)-J_{C_R}(\sigma)
+\frac{1}{2}\int_{\mathcal{B}_R}\Big((q^\sigma)^2-(q^u)^2\Big)\sum_i^{n-1}\|\frac{\partial \nu^u}{\partial y_i}\|^2 dy+\int_{\mathcal{B}_R}(\mathcal{W}(\sigma)-\mathcal{W}(u))dy\\
&\leq\int_{\mathcal{B}_R}(\mathcal{W}(\sigma)-\mathcal{W}(u))dy.
\end{split}
\end{equation}
Indeed minimality and Lemma \ref{l-infty} imply $J_{C_R}(u)-J_{C_R}(\sigma)\leq 0$ and the second term is also nonpositive by $0\leq q^\sigma\leq q^u$.

\noindent 2.\underline{An Upper Bound for the Energy}

Next we establish the analogous of Lemma \ref{energy-upper-bound} that is
\begin{lemma}\label{2energy-upper-bound}
Assume that $W$ satisfies {\rm({ha})} and assume that $u$ is minimal as defined in {\rm({hc})} and $e$ a global minimizer of the Action as in {\rm({hb})} above (hyperbolicity is not required). Then there is a constant $C>0$ depending on $K$, independent of $u$ and independent of $y_0$ such that
\begin{equation}\label{2energy-upper-bound-1}
\begin{split}
& 0\leq\int_{C_r(y_0)}\Big(\frac{1}{2}\vert\nabla u\vert^2+W(u) -(\frac{1}{2}\vert e_s\vert^2+W(e))\Big) dx\\&=J_{C_R(y_0)}(u)\leq C R^{n-2},\;\text{ for }\;R>0.
 \end{split}
 \end{equation}
\end{lemma}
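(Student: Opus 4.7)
The lower bound $J_{C_R(y_0)}(u)\ge 0$ is immediate from the polar representation (\ref{2energy-polar}): the integrand $\tfrac12\bigl(|\nabla q^u|^2+(q^u)^2\sum_{i=1}^{n-1}\|\tfrac{\partial\nu^u}{\partial y_i}\|^2\bigr)+\mathcal{W}(u(\cdot,y))$ is nonnegative by Lemma~\ref{2w-properties}(i), using that, for almost every $y$, the slice $u(\cdot,y)$ is admissible in the definition of $\mathcal{W}$ (symmetric and asymptotic to $a_\pm$ thanks to (\ref{grad-bound-1}) and $u(\hat x)=\hat u(x)$).

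For the upper bound I will exhibit an explicit symmetric competitor and invoke the minimality of $u$ on the unbounded symmetric cylinder $C_R(y_0)$ supplied by Lemma~\ref{l-infty}. Let $\chi=\chi(|y-y_0|)$ be a Lipschitz cutoff with $\chi\equiv 0$ on $\mathcal{B}_{R-1}(y_0)$, $\chi\equiv 1$ on $\partial\mathcal{B}_R(y_0)$, and $|\nabla\chi|\le C$ independent of $R$. Define
\[
v(s,y)=e(s)+\chi(y)\bigl(u(s,y)-e(s)\bigr).
\]
Because $\chi$ depends only on $y$ and both $u$ and $e$ are symmetric in $s$, $v$ is symmetric; by construction $v=u$ on $\R\times\partial\mathcal{B}_R(y_0)$ and $v\equiv e$ on $\R\times\mathcal{B}_{R-1}(y_0)$. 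The exponential decay (\ref{grad-bound-1}) and (\ref{grad-bound-e}), extended to all $s\in\R$ by the symmetry $u(\hat x)=\hat u(x)$, yields
\[
|u-e|+|u_s|+|\nabla_y u|+|e_s|\le C\,e^{-k|s|},
\]
which in particular puts $v-u$ in $W_{0,S}^{1,2}(C_R(y_0);\R^m)$, so Lemma~\ref{l-infty} gives $J_{C_R(y_0)}(u)\le J_{C_R(y_0)}(v)$.

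To estimate $J_{C_R(y_0)}(v)$, note that the integrand vanishes on $\R\times\mathcal{B}_{R-1}(y_0)$ where $v\equiv e$. On the annulus $\R\times(\mathcal{B}_R\setminus\mathcal{B}_{R-1})(y_0)$, the identity $v_s=(1-\chi)e_s+\chi u_s$ together with convexity of $|\cdot|^2$ gives $\tfrac12|v_s|^2-\tfrac12|e_s|^2\le\tfrac{\chi}{2}|u_s|^2$, while $|\nabla_y v|^2\le 2\chi^2|\nabla_y u|^2+2|\nabla\chi|^2|u-e|^2$. For the potential difference, Taylor expansion around $e$ combined with $W_u(a_\pm)=0$ (so $|W_u(e(s))|\le C|e(s)-a_\pm|\le Ce^{-k|s|}$) gives
\[
|W(v)-W(e)|\le C\,e^{-k|s|}|u-e|+C|u-e|^2.
\]
Integrating each of these bounds in $s$ and invoking the exponential-decay bound above produces
\[
\int_\R\Bigl(\tfrac12|\nabla v|^2+W(v)-\tfrac12|e_s|^2-W(e)\Bigr)ds\le C
\]
uniformly in $y$. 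Multiplying by $\mathcal{L}^{n-1}(\mathcal{B}_R\setminus\mathcal{B}_{R-1})\le C R^{n-2}$ delivers the claim.

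The principal technical point is not a single deep step but the bookkeeping that must align the symmetry of $v$, the zero trace on $\R\times\partial\mathcal{B}_R(y_0)$, and the \emph{finiteness} of the modified energy on the unbounded cylinder. All three hinge on the exponential decay (\ref{grad-bound-1}) of $u-a_\pm$ and $\nabla u$ away from the reflection plane $\{x_1=0\}$, which forces $u-e$ and its relevant derivatives to be $L^2(\R)$ slicewise, uniformly in $y$, and thereby converts the slicewise bound into the desired $O(R^{n-2})$ bound on the whole cylinder.
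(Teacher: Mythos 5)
Your proposal is correct and follows essentially the same route as the paper: the competitor $v=e+\chi(y)(u-e)$ with $\chi$ interpolating linearly from $0$ on $\mathcal{B}_{R-1}(y_0)$ to $1$ on $\partial\mathcal{B}_R(y_0)$ is exactly the map (\ref{comp-map}) used in the paper, combined with Lemma \ref{l-infty} and the exponential decay (\ref{grad-bound-1}) to bound the slicewise energy on the annulus by a constant. Your write-up simply makes explicit the estimates (on $|v_s|^2$, $|\nabla_y v|^2$, and $W(v)-W(e)$) and the nonnegativity of $J_{C_R(y_0)}(u)$ that the paper leaves implicit.
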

\begin{proof}
Let
\begin{eqnarray}\label{comp-map}
\hskip1cm v(\cdot,y)=\left\{\begin{array}{l}
e(\cdot\,),\quad\;\text{ for }\;y\in \mathcal{B}_{R-1}(y_0),\\\\
e(\cdot\,)+(\vert y-y_0\vert-R+1) q^u(y)\nu^u(\cdot,y),\;\text{ for }\;y\in \mathcal{B}_R(y_0)\setminus \mathcal{B}_{R-1}(y_0).
\end{array}\right.
\end{eqnarray}
From Lemma \ref{l-infty} we have
\[J_{C_R(y_0)}(u)\leq J_{C_R(y_0)}(v)=J_{\R\times (\mathcal{B}_R(y_0)\setminus \mathcal{B}_{R-1}(y_0))}(v),\]
and via (\ref{grad-bound-1})
\[J_{\R\times (\mathcal{B}_R(y_0)\setminus \mathcal{B}_{R-1}(y_0))}(v)\leq C\mathcal{L}^{n-1}(\mathcal{B}_R(y_0)\setminus \mathcal{B}_{R-1}(y_0))\leq C R^{n-2}.\]
%Indeed, for $y$ in the annulus $\mathcal{B}_R(y_0)\setminus \mathcal{B}_{R-1}(y_0))$, (\ref{comp-map}) implies
%\begin{equation}\label{koverk}\begin{split}
%& q^v\leq q^u\leq\frac{K}{\sqrt{k}},\\
%&\|v_s\|^2\leq 2(\|e_s\|^2+\|u_s\|^2)\leq 4\frac{K^2}{k},\\
%&\vert\nabla q^v\vert^2\leq 2((q^u)^2+\vert\nabla q^u\vert^2)\leq %4\frac{K^2}{k},
%\end{split}\end{equation}
%where we have also used the assumptions (\ref{grad-bound-1}) and (\ref{grad-bound-e}) in ({hc}). By (\ref{koverk}) the integrand in (\ref{2energy-polar}) is uniformly bounded on $\mathcal{B}_R(y_0)\setminus \mathcal{B}_{R-1}(y_0)$ by a constant $C>0$ that depends only on $k, K$ and $W$.
  The proof of the lemma is complete.
\end{proof}
 \vskip1.1cm
 \noindent 3.\underline{The Isoperimetric Inequality for Minimizers}

As in the proof of the case $\alpha=2$ in Theorem A we let $\varphi:\mathcal{B}_R\subset\R^{n-1}\rightarrow\R$ be the solution of the problem
 \begin{eqnarray}\label{2phi}
 \left\{\begin{array}{l}\Delta\varphi=c_1\varphi,\;\text{ on }\;\mathcal{B}_R,\\
 \varphi=1,\;\text{ on }\;\partial \mathcal{B}_R,\end{array}\right.
 \end{eqnarray}
 where $c_1<c_0$ will be chosen later and $c_0$ is the constant in Lemma \ref{2w-properties}. We set
  \[q_M=\sup_{y\in\R^{n-1}}\|u(\cdot,y)\|\]
   and
 define
 \begin{equation}\label{2h-sigma-def}
 \begin{split}
 &h=e+q^h\nu^u,\quad q^h=\varphi q_M,\;\text{ and as before }\\
 &\sigma=e+q^\sigma\nu^u,\quad q^\sigma=\min\{q^u,q^h\},\\
 & \beta=\min\{q^u-q^\sigma,\lambda\},
 \end{split}
 \end{equation}
 where $\lambda\in(0,\bar{q})$ with $\bar{q}$ as in Lemma \ref{2w-properties}. We also recall the exponential estimate
 \begin{equation}\label{2exp-phi}
 \varphi(R-r)\leq e^{-c_2(R-r)},\;\text{ for }\;r\in[0,R],\;R\geq 1,
 \end{equation}
 for some $c_2>0$.

 We remark that the definition of $\sigma$ in (\ref{2h-sigma-def}) implies
 \[q^\sigma=q^u,\;\text{ on }\;\partial \mathcal{B}_R\]
 and that $\sigma\in W^{1,2}(C_R;\R^m)\cap L^\infty(C_R;\R^m)$ (see \cite{afs}).

Proceeding as in the proof of Theorem A by applying the inequality in \cite{eg} on $\mathcal{B}_R\subset\R^{n-1}$
to $\beta^2$ yields
\begin{equation}
\begin{split}
\Big(\int_{\mathcal{B}_R}\beta^\frac{2(n-1)}{n-2}dy\Big)^\frac{n-2}{n-1}&
=\Big(\int_{\mathcal{B}_R}(\beta^2)^\frac{n-1}{n-2}dy\Big)^\frac{n-2}{n-1}dy\\
&\leq C\int_{\mathcal{B}_R}\vert\nabla(\beta^2)\vert dy\;\;\quad(\beta=0,\;\text{ on }\;\partial B_R)\\
&\leq 2 C\int_{\mathcal{B}_R\cap\{q^u-q^\sigma\leq\lambda\}}\vert\nabla\beta\vert\vert\beta\vert dy\\
&\leq
 C A\int_{\mathcal{B}_R}\vert\nabla(q^u-q^\sigma)\vert^2 dy+
\frac{C}{A}\int_{\mathcal{B}_R\cap\{q^u-q^\sigma\leq\lambda\}}(q^u-q^\sigma)^2 dy\\&=
 C A\Big(\int_{\mathcal{B}_R}(\vert\nabla q^u\vert^2-\vert\nabla q^\sigma\vert^2) dy-
 2\int_{\mathcal{B}_R}\nabla q^\sigma\cdot\nabla(q^u-q^\sigma)dy\Big)\\&+
\frac{C}{A}\int_{\mathcal{B}_R\cap\{q^u-q^\sigma\leq\lambda\}}(q^u-q^\sigma)^2 dy
\end{split}\label{2iso-inequality}
\end{equation}
where we have utilized $\nabla\beta=0$ a.e. on $q^u-q^\sigma>\lambda$ and Young's inequality. Thus via (\ref{2young-1}) we derive
\begin{equation}
\begin{split}
&\Big(\int_{\mathcal{B}_R}\beta^\frac{2{n-1}}{n-2}dy\Big)^\frac{n-2}{n-1}\leq\\
 &\leq
 2C A\Big(\int_{\mathcal{B}_R}(\mathcal{W}(\sigma)-\mathcal{W}(u))dy-\int_{\mathcal{B}_R}\nabla q^\sigma\cdot\nabla(q^u-q^\sigma) dy\Big)\\&+
\frac{C}{A}\int_{\mathcal{B}_R\cap\{q^u-q^\sigma\leq\lambda\}}(q^u-q^\sigma)^2 dy
\end{split}
\label{2young}
\end{equation}
\vskip.8cm
\noindent 4.\underline{Conclusion}

The inequality (\ref{2young}), aside from the fact that $n$ is replaced by $n-1$, $\mathcal{B}_R$ is the ball of radius $R$ in $\R^{n-1}$ and $W$ is replaced by $\mathcal{W}$, coincides  with (\ref{young}). Moreover, by Lemma \ref{2w-properties},   $\mathcal{W}$ has the properties of $W$ in ({HA}), $\alpha=2$ and  Lemma \ref{2energy-upper-bound} is the counterpart of Lemma \ref{energy-upper-bound}. The only difference is that the inequality
\[W(h)-W(u)\leq W(h)\]
now is replaced by
\[\mathcal{W}(h)-\mathcal{W}(u)\leq \mathcal{W}(h).\]
Thus the arguments developed in the proof of Theorem A for the case $\alpha=2$ can be repeated verbatim to complete  the proof of Theorem B.
\subsection{The Proof of Theorem \ref{main-4}}
\noindent 1. First we note that under the hypotheses of Theorem \ref{main-4} we can take $\lambda^*=\|e_+-e_-\|$ in the statement of Theorem B. To argue this we let $\hat{\lambda}\in(0,\|e_+-e_-\|)$ and assume that
\[\mathcal{L}^{n-1}(\mathcal{B}_1(y_0)\cap
\{y:\|u(\cdot,y)-e_+(\cdot)\|\geq\hat{\lambda}\})\geq\mu_0>0.\]
Thus for $\lambda<\hat{\lambda}$, $\lambda>0$ as in Theorem B and fixed
\[\mathcal{L}^{n-1}(\mathcal{B}_1(y_0)\cap
\{y:\|u(\cdot,y)-e_+(\cdot)\|\geq\lambda\})\geq\mu_0>0.\]
Therefore
\[\mathcal{L}^{n-1}(\mathcal{B}_R(y_0)\cap
\{y:\|u(\cdot,y)-e_+(\cdot)\|\geq\lambda\})\geq C R^{n-1},\;R\geq 1.\]
We will be done if we can show that
\begin{equation}\label{willbedone}
\mathcal{L}^{n-1}(\mathcal{B}_R(y_0)\cap
\{y:\lambda\leq\|u(\cdot,y)-e_+(\cdot)\|\leq\hat{\lambda}\})\leq C R^{n-2}.
\end{equation}
For this purpose note that $\lambda\leq\|u(\cdot,y)-e_+(\cdot)\|\leq\hat{\lambda}$ implies
\[\|u(\cdot,y)-e_-(\cdot)\|\geq\|e_+-e_-\|-\|u(\cdot,y)-e_+(\cdot)\|
\geq\|e_+-e_-\|-\hat{\lambda}=\tilde{\lambda}>0.\]
Thus
on $S_\lambda^{\hat{\lambda}}=\mathcal{B}_R(y_0)\cap
\{y:\lambda\leq\|u(\cdot,y)-e_+(\cdot)\|\leq\hat{\lambda}\}$ we have the estimate
\[\mathcal{W}(u(\cdot,y))\geq \bar{w}(\tilde{\lambda})>0\]
and so via (\ref{2energy-upper-bound-1})
\[\bar{w}(\tilde{\lambda})\mathcal{L}^{n-1}(S_\lambda^{\hat{\lambda}})\leq
\int_{S_\lambda^{\hat{\lambda}}}\mathcal{W}(u(\cdot,y))dy\leq C R^{n-2},\]
and so (\ref{willbedone}) is established.

\noindent 2. Suppose now that
\[\mathcal{L}^{n-1}(\mathcal{B}_1(y_0)\cap
\{y:\|u(\cdot,y)-e_+(\cdot)\|\leq\theta\})>\mu_0>0.\]
Since $\lambda=\|e_+-e_-\|-\theta$ implies $\{y:\|u(\cdot,y)-e_-(\cdot)\|\leq\theta\}\subset
\{y:\|u(\cdot,y)-e_-(\cdot)\|\geq\lambda\}$
it follows that
\[\mathcal{L}^{n-1}(\mathcal{B}_1(y_0)\cap
\{y:\|u(\cdot,y)-e_-(\cdot)\|>\lambda\})\geq\mu_0>0.\]
Hence by 1. above
\[\mathcal{L}^{n-1}(\mathcal{B}_R(y_0)\cap
\{y:\|u(\cdot,y)-e_-(\cdot)\|>\lambda\})\geq C R^{n-1}.\;R\geq 1.\]
From this it easily follows that
\[\mathcal{L}^{n-1}(\mathcal{B}_R(y_0)\cap
\{y:\|u(\cdot,y)-e_+(\cdot)\|\leq\theta\})\geq C R^{n-1}.\]
The proof of Theorem \ref{main-4} is complete.
\subsection{On the Product Structure of Solutions}
In this subsection we give alternative proofs of some of the results in \cite{af4}.
\begin{theorem}\label{teo-af}{\rm (\cite{af4}, Theorem 1.2)} Assume that $W$ satisfies {\rm ({ha})} and {\rm ({hb})} and assume that the connection $e$ in {\rm ({hb})} is unique. Let $O\subset\R^{n-1}$, $O\neq\R^{n-1}$ be open with $\,\sup_{y_0\in O}R_{y_0}=+\infty$ {\rm(}$R_{y_0}=\sup_R\{\mathcal{B}_R(y_0)\subset O\}${\rm)} and assume that $u:\R\times O\rightarrow\R^m$ is minimal in the sense of {\rm ({hc})} {\rm(}with $\R^n$ replaced by $\R\times O${\rm)}. Then there are constants $k_0, K_0>0$ such that
\[\vert u(s,y)-e(s)\vert\leq K_0e^{-k_0 d(y,\partial O)}.\]
\end{theorem}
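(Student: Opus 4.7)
The plan is to mirror the strategy used for Theorem~\ref{1exp}: first establish the $L^2$-analogue of Theorem~\ref{teo-f}, namely that there exists $d_\lambda>0$ such that $d(y_0,\partial O)\geq d_\lambda$ implies $\|u(\cdot,y_0)-e(\cdot)\|\leq\lambda$, and then upgrade this $L^2$ smallness to pointwise exponential decay via a linearization argument that exploits the hyperbolicity~(\ref{t-spectral-bound}) of the connection $e$.

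For the first step I would argue by contradiction. Suppose $\|u(\cdot,y_0)-e(\cdot)\|\geq\lambda$ while $R_{y_0}$ is very large. The bounds (\ref{grad-bound-1}) and (\ref{grad-bound-e}) make $y\mapsto u(\cdot,y)-e(\cdot)$ Lipschitz from $\R^{n-1}$ into $L^2(\R;\R^m)$, hence $\|u(\cdot,y)-e(\cdot)\|\geq\lambda/2$ on a ball $\mathcal{B}_\delta(y_0)$ with $\delta=\delta(\lambda,K)>0$. Theorem~B (applied in cylinders $C_R(y_0)$ with $\mathcal{B}_R(y_0)\subset O$, which is allowed since its proof uses minimality only on such cylinders) then gives
\[\mathcal{L}^{n-1}(\mathcal{B}_R(y_0)\cap\{y:\|u(\cdot,y)-e(\cdot)\|\geq\lambda/2\})\geq CR^{n-1},\quad 1\leq R\leq R_{y_0}.\]
Uniqueness of $e$, combined with the uniform $W^{1,2}$ bound (\ref{2qnu-bound}) and with membership in the exponential class $\mathrm{E}^{\mathrm{xp}}$ forced by (\ref{grad-bound-1}), yields by a standard compactness and lower-semicontinuity argument a positive floor $\bar{\mathcal{W}}(\lambda)>0$ such that $\mathcal{W}(u(\cdot,y))\geq\bar{\mathcal{W}}(\lambda)$ whenever $\|u(\cdot,y)-e(\cdot)\|\geq\lambda/2$. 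Integrating,
\[J_{C_R(y_0)}(u)\geq\bar{\mathcal{W}}(\lambda)\,CR^{n-1},\]
contradicting Lemma~\ref{2energy-upper-bound} once $R\leq R_{y_0}$ is taken large. This establishes Step~1 with $d_\lambda$ depending on $\lambda$, $K$ and $\eta$.

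For Step~2, set $w:=u-e$ and $f(y):=\tfrac12\|w(\cdot,y)\|^2$. Using $\Delta u=W_u(u)$, $e''=W_u(e)$ and integration by parts in $s$ (justified by the exponential decay in (\ref{grad-bound-1}) and (\ref{grad-bound-e})), a direct computation gives
\[\Delta_y f(y)=\int_\R\Big(|\nabla_y w|^2+|w_s|^2+\bigl(W_u(e+w)-W_u(e)\bigr)\cdot w\Big)\,ds\geq \langle Tw(\cdot,y),w(\cdot,y)\rangle-C\|w(\cdot,y)\|_{L^3}^3.\]
The interpolation $\|w\|_{L^\infty}\leq\sqrt{2}\|w\|^{1/2}\|w_s\|^{1/2}$ and the uniform $W^{1,2}$ bound give $\|w\|_{L^3}^3\leq C\lambda^{1/2}\|w\|^2$ on the region $\{d(y,\partial O)\geq d_\lambda\}$, so $\Delta_y f\geq\kappa f$ there for some $\kappa>0$ provided $\lambda$ is small. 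Comparing $f$ with an exponential supersolution of $\Delta_y\phi=\kappa\phi$ via the maximum principle yields $\|w(\cdot,y)\|^2\leq Ce^{-2k_0 d(y,\partial O)}$. Standard interior elliptic estimates applied to the bounded solution $w$ of a second-order semilinear system translate this $L^2$-in-$s$ decay into the pointwise bound $|w(s,y)|\leq K_0 e^{-k_0 d(y,\partial O)}$ (after a harmless shrinking of $k_0$ to absorb a unit boundary layer).

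The main obstacle I anticipate lies in Step~1: obtaining the uniform energy gap $\bar{\mathcal{W}}(\lambda)>0$. The functional $\mathcal{W}$ is defined on an infinite dimensional affine subspace of $L^2(\R;\R^m)$, and passing from the pointwise statement \emph{$\mathcal{W}(v)>0$ for $v\ne e$} (which comes from uniqueness of $e$) to a uniform bound away from $e$ requires careful use of the uniform exponential decay at $s=\pm\infty$ supplied by $\mathrm{E}^{\mathrm{xp}}$, together with weak compactness in $W^{1,2}$ and the lower semicontinuity of $A$.
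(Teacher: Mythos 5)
Your proposal is correct and follows essentially the same route as the paper: Step~1 is exactly the argument the paper uses (Theorem~B plus Lemma~\ref{2energy-upper-bound} plus the uniform energy gap $\bar{w}(\lambda)>0$ coming from uniqueness and hyperbolicity of $e$, which the paper likewise asserts without detailed proof), and your Step~2 is a fleshed-out version of what the paper compresses into ``linear theory renders the result,'' the only cosmetic difference being that the paper first converts $L^2$-smallness to pointwise smallness via the interpolation Lemma~\ref{linf-l2} and then invokes linear theory, whereas you derive exponential decay of $\|u(\cdot,y)-e(\cdot)\|$ directly from the differential inequality $\Delta_y f\geq\kappa f$ and then pass to pointwise bounds by elliptic estimates.
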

\begin{proof} It is sufficient to establish
 that, given a small number $\gamma>0$, there is $d_\gamma>0$ such that
\begin{equation}\label{d}
d(y,\partial O)\geq d_\gamma\quad\Rightarrow\quad\vert u(s,y)-e(s)\vert<\gamma .
\end{equation}
since then linear theory renders the result.

By Lemma \ref{linf-l2} below  there is a constant $C>0$ such that
\[\|u(\cdot,y)-e(\cdot\,)\|_{L^\infty(\R;\R^m)}\leq C\|u(\cdot,y)-e(\cdot\,)\|^\frac{2}{3}.\] Therefore $\vert u(s,y)-e(s)\vert\geq\gamma$ implies
\begin{equation}\label{w}
\|u(\cdot,y)-e(\cdot\,)\|\geq(\frac{\gamma}{C})^\frac{3}{2}.
\end{equation}
From the assumed uniqueness and hyperbolicity of $e$ it follows that, given $\lambda>0$ small, it results
\[\|u(\cdot,y)-e(\cdot\,)\|\geq\lambda\quad\Rightarrow\quad\mathcal{W}(u(\cdot,y))\geq \bar{w}(\lambda)>0.\]
Therefore arguing as in the proof of Theorem \ref{teo-f} we deduce from Theorem B and Lemma \ref{2energy-upper-bound} that there is $R(\lambda)>0$ such that
\begin{equation}\label{righarrow}
\mathcal{B}_{R(\lambda)}(y_0)\subset O\quad\Rightarrow\quad\|u(\cdot,y_0)-e(\cdot\,)\|<\lambda
\end{equation}
This and (\ref{w}) imply that we can take $d_\gamma=R(\frac{\gamma}{C})^\frac{3}{2})$ in (\ref{d}). The proof is complete.
\end{proof}
We now establish
\begin{lemma}\label{linf-l2}{\rm(cfr. \cite{af4}  Lemma 2.2)} Let $v\in\mathrm{E}^{\mathrm{xp}}$. Then
\begin{equation}\label{9}\|v\|_{L^\infty(\R;\R^m)}\leq C\|v\|^\frac{2}{3},
\end{equation}
where $C=C(k,K)>0$ {\rm((\ref{grad-bound-1}))} is independent of $v$.
\end{lemma}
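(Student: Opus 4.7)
The plan is to exploit the uniform Lipschitz bound on $v$ that is built into the definition of $\mathrm{E}^{\mathrm{xp}}$. The estimate $|v'(s)|\le Ke^{-k|s|}$ in particular yields the global bound $\|v'\|_{L^\infty(\R;\R^m)}\le K$, so that $v$ is $K$-Lipschitz, and simultaneously $v(s)\to 0$ as $|s|\to\infty$. I will use this Lipschitz control to upgrade an $L^2$ bound into an $L^\infty$ bound with the improved exponent $2/3$ via a standard ``tent'' argument.

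First I would set $M:=\|v\|_{L^\infty(\R;\R^m)}$ and dispose of the trivial case $M=0$. Since $v$ is continuous and vanishes at $\pm\infty$, the value $M$ is attained at some finite $s_0\in\R$. The Lipschitz estimate then gives $|v(s)|\ge M-K|s-s_0|\ge M/2$ whenever $|s-s_0|\le M/(2K)$. Integrating this lower bound yields
\[\|v\|^2\;\ge\;\int_{|s-s_0|\le M/(2K)}|v(s)|^2\,ds\;\ge\;\Bigl(\tfrac{M}{2}\Bigr)^2\cdot\tfrac{M}{K}\;=\;\tfrac{M^3}{4K},\]
so that $M\le(4K)^{1/3}\|v\|^{2/3}$, which is (\ref{9}) with a constant $C=(4K)^{1/3}$ depending only on $K$ (a fortiori on $(k,K)$).

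There is no significant obstacle in this argument: the lemma is an elementary one-dimensional interpolation, and every ingredient (continuity, a global Lipschitz bound, decay at infinity ensuring the supremum is attained, and $L^2$-integrability) is supplied for free by the definition of the exponential class $\mathrm{E}^{\mathrm{xp}}$. The only mild point worth flagging is that the faster pointwise decay $Ke^{-k|s|}$ of $v'$ is not actually needed; what drives the exponent $2/3$ is just the uniform bound $\|v'\|_\infty\le K$, which explains why $k$ does not really enter the constant.
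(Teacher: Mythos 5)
Your proof is correct, and it takes a genuinely different (though equally elementary) route to the same one-dimensional interpolation inequality. The paper writes $\vert v(s)\vert^p=\int_{-\infty}^s\frac{d}{dt}\vert v(t)\vert^p\,dt\le p\int_\R\vert v\vert^{p-1}\vert v_t\vert\,dt$, applies H\"older with conjugate exponents $(p',q')$, and then optimizes the resulting exponent $\frac{p-1}{p}$ under the constraint $p'(p-1)=2$; the maximum $\frac23$ is attained at $p=3$, $q'=\infty$, where the H\"older factor is just $\Vert v_t\Vert_{L^\infty}\le K$. You instead localize at a point where $\vert v\vert$ attains its supremum $M$ (which exists because $v$ is continuous and decays at infinity) and use the Lipschitz bound $\Vert v'\Vert_{L^\infty}\le K$ to force $\vert v\vert\ge M/2$ on an interval of length $M/K$, whence $\Vert v\Vert^2\ge M^3/(4K)$. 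Both arguments prove the Gagliardo--Nirenberg-type bound $\Vert v\Vert_{L^\infty}^3\le C\,\Vert v'\Vert_{L^\infty}\Vert v\Vert^2$ and draw on exactly the same features of the class $\mathrm{E}^{\mathrm{xp}}$: the uniform bound $\vert v'\vert\le K$ and enough decay to put $v$ in $L^2$ and make the supremum attained. Your remark that the exponential rate $k$ does not really enter the constant is consistent with the paper's proof at its optimal endpoint $q'=\infty$, where only $\Vert v_t\Vert_{L^\infty}$ appears. The ``tent'' argument is arguably cleaner in that it avoids the exponent optimization; the paper's version has the virtue of making transparent why $\frac23$ is the best exponent obtainable from an $L^2$ bound on $v$ together with an $L^\infty$ bound on $v'$.
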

\begin{proof}
\[\begin{split}\vert v(s)\vert^p&=\int_{-\infty}^s\frac{\partial}{\partial t}\vert v(t)\vert^p dt\leq p\int_\R\vert v(t)\vert^{p-1}\vert v_t(t)\vert dt\\&\leq p
\int_\R(\vert v(t)\vert^{p^\prime(p-1)})^\frac{1}{p^\prime}(\int_\R\vert v_t(t)\vert^{q^\prime} dt)^\frac{1}{q^\prime}
\quad\quad(\frac{1}{p^\prime}+\frac{1}{q^\prime}=1)\\&
\leq C\|v\|_{L^{p^\prime(p-1)}(\R;\R^m)}^{p-1}.
\end{split}\]
Hence
\[\|v\|_{L^\infty(\R;\R^m)}\leq C\|v\|_{L^{p^\prime(p-1)}(\R;\R^m)}^\frac{p-1}{p}.\]
Choosing first $p^\prime$ so that $p^\prime(p-1)=2$ and finally noting that $\max\frac{p-1}{p}=\frac{2}{3}$ we arrive at (\ref{9}). The proof of the lemma is complete.
\end{proof}

In \cite{af4} Theorem \ref{teo-af} was established by a different approach which also applies to a larger class of minimizers not necessarily defined on cylinders. We conclude with the following  {\it Rigidity} result
\begin{theorem}{\rm (see Theorem 1.3 in \cite{af4})} Assume $u:\R^n\rightarrow\R^m$ and otherwise the hypothesis of Theorem \ref{teo-af}. Then
\[u(x)=e(x_1),\;\text{ for }\;x\in\R^n.\]
\end{theorem}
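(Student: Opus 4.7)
The plan is to imitate the short argument by which Theorem \ref{rigidity} was deduced from Theorem \ref{teo-f} earlier in the paper, now using Theorem \ref{teo-af} (or really its key internal implication) in place of Theorem \ref{teo-f}. The point is that the hypothesis $D=\R^n$ in Theorem \ref{rigidity} has a direct analogue in the cylindrical setting: taking $O=\R^{n-1}$ means $\mathcal{B}_R(y_0)\subset O$ for every $y_0\in\R^{n-1}$ and every $R>0$, which trivializes the smallness conclusion into an exact identity.

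Concretely, I would proceed as follows. First, recall the key step (\ref{righarrow}) used inside the proof of Theorem \ref{teo-af}: for every sufficiently small $\lambda>0$ there exists $R(\lambda)>0$ such that
\[
\mathcal{B}_{R(\lambda)}(y_0)\subset O\quad\Longrightarrow\quad\|u(\cdot,y_0)-e(\cdot\,)\|<\lambda,
\]
which was derived by combining Theorem B, the uniqueness and hyperbolicity of $e$ (yielding $\mathcal{W}(u(\cdot,y))\geq\bar w(\lambda)>0$ on the set $\{\|u(\cdot,y)-e\|\geq\lambda\}$), and the upper bound $J_{C_R(y_0)}(u)\leq CR^{n-2}$ from Lemma \ref{2energy-upper-bound}. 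Under the present hypotheses this implication applies with $O=\R^{n-1}$.

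Next, for an arbitrary $y_0\in\R^{n-1}$ and an arbitrary small $\lambda>0$, the inclusion $\mathcal{B}_{R(\lambda)}(y_0)\subset\R^{n-1}$ is trivially true, so the displayed implication gives $\|u(\cdot,y_0)-e(\cdot\,)\|<\lambda$. Since $\lambda>0$ is arbitrary, we conclude $\|u(\cdot,y_0)-e(\cdot\,)\|=0$ for every $y_0\in\R^{n-1}$, i.e.\ $u(s,y)=e(s)$ for a.e.\ $s\in\R$, which by continuity (from $u\in W_{\rm loc}^{1,2}\cap L^\infty$ together with the Euler--Lagrange equation and standard elliptic regularity) upgrades to the pointwise identity $u(x)=e(x_1)$ on $\R^n$.

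There is essentially no obstacle here: the proof is a direct corollary in the same spirit as Theorem \ref{rigidity} was a direct corollary of Theorem \ref{teo-f}. The only mild bookkeeping point is to make sure the threshold $\lambda^{\ast}>0$ in Theorem B is respected (so $\lambda$ is taken $<\lambda^{\ast}$ before letting $\lambda\to 0^+$), and to verify that Lemma \ref{linf-l2} is not actually needed in this step because we are concluding $\|u(\cdot,y)-e\|=0$ directly and do not need to translate back to an $L^\infty$ statement before taking the limit. All the real work has already been done in Theorem B and in the proof of Theorem \ref{teo-af}.
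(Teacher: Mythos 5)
Your proposal is correct and follows essentially the same route as the paper: both apply the implication (\ref{righarrow}) from the proof of Theorem \ref{teo-af} with $O=\R^{n-1}$, where the inclusion $\mathcal{B}_{R(\lambda)}(y_0)\subset\R^{n-1}$ holds trivially, and let $\lambda\to 0^+$. The only cosmetic difference is that the paper passes through Lemma \ref{linf-l2} to convert the $L^2$ conclusion into an $L^\infty$ one, whereas you conclude $\|u(\cdot,y_0)-e(\cdot)\|=0$ directly and invoke continuity; both are valid.
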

\begin{proof} The argument is essentially the same as in the proof of Theorem \ref{rigidity}. For each $y_0\in\R^{n-1}$ and for each $\lambda>0$ we have trivially $\mathcal{B}_{R(\lambda)}(y_0)\subset\R^{n-1}$ and therefore, using also Lemma \ref{linf-l2}
\[C^{-\frac{3}{2}}(\|u(\cdot,y_0)-e(\cdot\,)\|_{L^\infty(\R;\R^m)})^\frac{3}{2}\leq \|u(\cdot,y_0)-e(\cdot\,)\|<\lambda,\;\text{ for each }\;y_0\in\R^{n-1},\,\lambda>0.\]
The proof is concluded.

\end{proof}
\bibliographystyle{plain}

\vskip.2cm
(N.D. ALIKAKOS) Department of Mathematics, University of Athens, Panepistemiopolis, 15784 Athens, Greece; e-mail: {\texttt{nalikako@math.uoa.gr}}
\vskip.2cm
\noindent (G. FUSCO) Universit\`a degli Studi dell'Aquila, Via Vetoio, 67010 Coppito, L'Aquila, Italy; e-mail:{\texttt{fusco@univaq.it}}

\end{document}